\documentclass[12pt,a4paper]{article}
\usepackage[cp1251]{inputenc}
\usepackage{amsmath,amsthm}
\usepackage{amsfonts,amstext,amssymb,verbatim,epsfig}
\usepackage{dsfont}
\usepackage{enumerate}
\usepackage{psfrag}
\usepackage{graphicx}
\usepackage{subfig}
\usepackage{color}
\usepackage{float}
\usepackage{subfig}
\usepackage{enumitem}
\usepackage{scrextend}
\usepackage{mathrsfs}

\usepackage[top=3cm, bottom=3.5cm, left=2.5cm, right=2.5cm]{geometry} 

\setlength\parindent{0pt}

\sloppy 
\def\R{{\mathbb{R}}}
\def\N{{\mathbb{N}}}
\def\Z{{\mathbb{Z}}}
\def\P{{\mathbb{P}}}
\def\prw{{\mathrm{P}}}

\def \p{{\bf P1}} 
\def \pp{{\bf P2}} 
\def \ppp{{\bf P3}} 
\def \s{{\bf S1}} 
\def \sss{{\bf S2}}
\def \d{{\bf D}}

\def \GG{{\mathbb G}} 
\newcommand{\cube}{\mathrm{Q}} 
\newcommand{\edges}{\square}   
\newcommand{\nngg}{\,\stackrel{\GG_0}\sim\,} 
\newcommand{\capa}{\mathrm{cap}} 
\newcommand{\lt}{\mathrm{n}}
\newcommand{\ltloop}{\mathcal{N}}
\newcommand{\good}{\mathrm{G}}
\newcommand{\bad}{\mathrm{B}}
\newcommand{\nume}{\mathcal{Z}}

\def \atom{\omega} 
\def \set{{\mathcal S}} 
\def \constP{{\chi_{\scriptscriptstyle {\mathrm P}}}}
\def \constS{{\Delta_{\scriptscriptstyle {\mathrm S}}}}
\def \RP{R_{\scriptscriptstyle {\mathrm P}}}
\def \RS{R_{\scriptscriptstyle {\mathrm S}}}
\def \LP{L_{\scriptscriptstyle {\mathrm P}}}
\def \epsP{{\varepsilon_{\scriptscriptstyle {\mathrm P}}}}
\def\badseed{{\overline G}}

\def\bb{{\mathrm B}_{\GG_0}}
\def\ss{{\mathrm S}_{\GG_0}}
\def\dint{\partial_{\mathrm{int}}}
\def\dext{\partial_{\mathrm{ext}}}
\def \ball{{\mathrm B}} 


\newtheorem{theorem}{Theorem}[section]
\newtheorem{corollary}[theorem]{Corollary}
\newtheorem{lemma}[theorem]{Lemma}
\newtheorem{proposition}[theorem]{Proposition}

\newtheoremstyle{likedef}
  {}%
  {}%
  {}%
  {}
  {\bfseries}%
  {.}%
  {.5em}%
  {}%

\theoremstyle{likedef}

\newtheorem{definition}[theorem]{Definition}
\newtheorem{remark}[theorem]{Remark}

\numberwithin{equation}{section}

\begin{document}

\title{Decoupling inequalities and supercritical percolation for the vacant set of random walk loop soup}

\author{
Caio Alves\thanks{
University of Leipzig, Department of Mathematics,  
Augustusplatz 10, 04109 Leipzig, Germany.
Email: caio.alves@math.uni-leipzig.de and artem.sapozhnikov@math.uni-leipzig.de. 
The research of the authors has been supported by the DFG grant SA 3465/1-1.}
\and
Artem Sapozhnikov\footnotemark[1]
}

\maketitle

\footnotetext{MSC2000: Primary 60K35, 82B43, 60G55, 60J10.}
\footnotetext{Keywords: Random walk loop soup; percolation; decoupling inequality; long-range correlations; Poisson point process; random walk.}

\begin{abstract}
It has been recently understood \cite{DRS12,PRS,S14} that for a general class of percolation models on $\Z^d$ satisfying suitable 
decoupling inequalities, which includes i.a.\ Bernoulli percolation, random interlacements and level sets of the Gaussian free field, 
large scale geometry of the unique infinite cluster in strongly percolative regime is qualitatively the same; 
in particular, the random walk on the infinite cluster satisfies the quenched invariance principle, 
Gaussian heat-kernel bounds and local CLT. 

In this paper we consider the random walk loop soup on $\Z^d$ in dimensions $d\geq 3$. 
An interesting aspect of this model is that despite its similarity and connections to random interlacements and the Gaussian free field, 
it does not fall into the above mentioned general class of percolation models, since the required decoupling inequalities are not valid. 

We identify weaker (and more natural) decoupling inequalities and prove that (a) they do hold for the random walk loop soup and 
(b) all the results about the large scale geometry of the infinite percolation cluster proved for the above mentioned class 
of models hold also for models that satisfy the weaker decoupling inequalities. Particularly, all these results are new 
for the vacant set of the random walk loop soup. 
(The range of the random walk loop soup has been addressed by Chang \cite{Ch15} by a model specific approximation method, which does not apply to the vacant set.)

Finally, we prove that the strongly supercritical regime for the vacant set of the random walk loop soup is non-trivial. 
It is expected, but open at the moment, that the strongly supercritical regime coincides 
with the whole supercritical regime. 
\end{abstract}

\section{Introduction}

Consider the integer lattice $\Z^d$ with dimension $d\geq 3$. 
Any nearest neighbor path $\dot\ell = (x_1,\dots,x_n)$ on $\Z^d$ with $x_n$ 
being a neighbor of $x_1$ is called a (non-trivial discrete) \emph{based loop}. 
Two based loops of length $n$ are equivalent if they differ only by a circular permutation of their vertices, i.e., 
$(x_1,\dots,x_n)$ is equivalent to $(x_i,\dots,x_n,x_1,\dots,x_{i-1})$ for all $i$. 
Equivalence classes of based loops for this equivalence relation are called \emph{loops}. 
Consider the measure $\dot\mu$ on based loops defined by 
\[
\dot\mu(\dot\ell)= \frac{1}{n}\,\left(\frac{1}{2d}\right)^{n}
,\quad \dot\ell = (x_1,\ldots, x_n),
\]
and denote the push-forward of $\dot\mu$ on the space of loops by $\mu$. 
For $\alpha>0$, let $\mathscr L^\alpha$ be the Poisson point process of loops with intensity measure $\alpha\mu$
(random walk loop soup). 

\medskip

Poisson ensembles of Markovian loops (loop soups) have been recently actively researched by probabilists and mathematical physicists 
partly due to their connections to the Gaussian free field, the Schramm-Loewner Evolution and the loop erased random walk, 
see, e.g., \cite{LW04, LeJ11, SW12, Szn12:GFF-notes, Lupu-RI, Lupu-CLE, Cam15, BCL16, SS-LEW}. 
Although they already appear implicitly in the work of Symanzik \cite{Sym69} on representations of the $\phi^4$ Euclidean field, 
the first mathematically rigorous definitions were given by Lawler and Werner \cite{LW04} in the context of planar Brownian motion (Brownian loop soup)
and by Lawler and Trujillo Ferreras \cite{LT07} in discrete setting. 

Percolation of loop soups was first considered by Lawler and Werner \cite{LW04} and 
Sheffield and Werner \cite{SW12}, who identified, in particular, the value of the critical intensity 
for the planar Brownian loop soup. 
The existence of percolation phase transition for the random walk loop soup on $\Z^d$ 
and properties of the critical intensity have been investigated in \cite{LLem12, Lupu-RI, CS16, Lupu-ECP, Ch15}. 
Comprehensive analysis of connectivity properties of the random walk loop soup on $\Z^d$ in subcritical regime was achieved 
by Chang and the second author \cite{CS16} and in supercritical regime by Chang \cite{Ch15}. 

One of the main challenges for the study of connectivity properties of the loop soup is 
the polynomial decay of correlations (see \cite{CS16}). 
Models of percolation exhibiting strong spatial correlations have been of immense interest in the last decade, 
including the random interlacements, the vacant set of random interlacements and the level sets of the Gaussian free field, 
see, e.g., \cite{SznitmanAM,Sznitman:Decoupling,RSzn13}. 
Many of the methods (particularly, the coarse graining and Peierls-type arguments) developed for Bernoulli percolation do not apply to these models. 
The fundamental idea behind the major progress in understanding these models (which are monotone in their intensity parameters) is that 
the effect of correlations can be well dominated with a slight tilt of the intensity parameter (\emph{sprinkling}). 
This idea is formalized in correlation inequalities, known as \emph{decoupling inequalities} \cite{SznitmanAM,Sznitman:Decoupling,RSzn13,DRS-book,PR15,PT15,AP15,Rodriguez16}. 
A general class of percolation models, which satisfy a suitable decoupling inequality and contains the three models mentioned above, was considered in \cite{DRS12,PRS,S14}, 
where most of the geometric properties of the infinite percolation cluster, previously only known to hold for Bernoulli percolation, were proven. 
(See Section~\ref{sec:correlated-percolations} for a precise formulation of conditions from \cite{DRS12}.)
An interesting aspect of the random walk loop soup percolation is that it does not fall into this general class of models, since 
the decoupling inequalities assumed there (see condition \ppp{} in Section~\ref{sec:correlated-percolations}) are not valid. 
The main reason is that the error term in the decoupling inequality \ppp{} gets smaller on larger scales, 
while the stochastic behavior of macroscopic loops in the loop soup is scale invariant, see Remark~\ref{rem:p3-loopsoup} for some more details.

\medskip

The main goal of this paper is the study of geometric properties of connected components of 
the \emph{vacant set} of the loop soup $\mathscr L^\alpha$ 
--- the vertices of $\Z^d$ that do not belong to any of the loops in $\mathscr L^\alpha$ ---
which we denote by $\mathcal V^\alpha$. 
The vacant set exhibits a non-trivial percolation phase transition: there exists $\alpha_*\in(0,\infty)$ such that 
\begin{itemize}\itemsep0pt
\item
for $\alpha<\alpha_*$ there is almost surely a unique infinite connected component in $\mathcal V^\alpha$, 
\item
for $\alpha>\alpha_*$ all the connected components are almost surely finite. 
\end{itemize}
The fact that $\alpha_*<\infty$ is elementary, since $\mathcal V^\alpha$ is stochastically dominated by Bernoulli site percolation 
with parameter $\exp\left(-\frac{\alpha}{4d^2}\right)$ (by restricting $\mathscr L^\alpha$ to loops of length $2$), 
and the positivity of $\alpha_*$ follows from Theorem~\ref{thm:lu}. 
The uniqueness of the infinite cluster is not entirely trivial, since the so-called positive finite energy property 
fails for $\mathcal V^\alpha$, but still can be proved by a direct adaptation of the standard Burton-Keane argument \cite{BK89}, cf.\ 
Remark~\ref{rem:uniqueness}.

\medskip

Our main focus is on geometric properties of the unique infinite cluster of $\mathcal V^\alpha$. 
As already mentioned, a unified framework to study infinite clusters of (correlated) percolation models on $\Z^d$ was proposed in \cite{DRS12}, 
within which various results that were previously known only for supercritical Bernoulli percolation have been proven. 
These include i.a.\ quenched Gaussian heat kernel bounds, Harnack inequalities, invariance principle and local CLT for the simple random walk 
on the infinite cluster \cite{PRS,S14}.
The loop soup percolation does not fall into this general class of models, since decoupling inequalities \ppp{} assumed there are not valid, see Remark~\ref{rem:p3-loopsoup}.
However, Chang \cite{Ch15} was able to prove all the above mentioned results for the infinite cluster in the range of the loop soup $\mathscr L^\alpha$
by observing that the properties of the infinite cluster are predominantly determined by loops with bounded diameter. 
In a way, the infinite cluster is a small perturbation on top of the infinite cluster of truncated loops. 
His analysis relies substantially on the Poisson point process structure of the loop soup and cannot be adapted to 
the vacant set, which is thus considerably more difficult. 

\medskip

Our first result states that the range of $\mathscr L^\alpha$ does satisfy a decoupling inequality, 
which is however weaker than the one imposed in \cite{DRS12}, see Remarks~\ref{rem:d}(4) and \ref{rem:p3-loopsoup}. 
\begin{theorem}\label{thm:decoupling}(Decoupling inequalities) 
Let $\mathcal R^\alpha$ be the set of vertices visited by loops from $\mathscr L^\alpha$ (the range of $\mathscr L^\alpha$) and 
denote by $\mathbb E^\alpha$ the expectation with respect to the distribution of $\left\{\mathds{1}_{x\in\mathcal R^\alpha}\right\}_{x\in\Z^d}$ on $\{0,1\}^{\Z^d}$.
There exist constants $C,c$ such that for any $\alpha>0$, $\delta\in(0,1)$, integers $L,s\geq 1$, $x_1,x_2\in\Z^d$ with $\|x_1-x_2\|=sL$, 
and any functions $f_1,f_2:\{0,1\}^{\Z^d}\to [0,1]$ such that 
$f_i(\omega)$ only depends on values of $\omega_x$ with $\|x-x_i\|\leq L$,
\begin{enumerate}[leftmargin=*]
\item
if $f_2$ is increasing, then 
\begin{equation}\label{eq:decoupling}
\mathbb E^\alpha\left[f_1\,f_2\right] \leq 
\mathbb E^\alpha\left[f_1\right]\,\mathbb E^{\alpha+\delta}\left[f_2\right] + C\exp\left(\alpha-c\sqrt{\delta}s^{d-2}\right),
\end{equation}
\item
if $f_2$ is decreasing, then 
\begin{equation}\label{eq:decoupling:decreasing}
\mathbb E^\alpha\left[f_1\,f_2\right] \leq 
\mathbb E^\alpha\left[f_1\right]\,\mathbb E^{(\alpha-\delta)_+}\left[f_2\right] + C\exp\left(\alpha-c\sqrt{\delta}s^{d-2}\right).
\end{equation}
\end{enumerate}
\end{theorem}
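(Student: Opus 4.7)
I would prove part~(i); (ii) follows by the analogous argument applied to $1-f_2$ with the sprinkling direction reversed. The strategy is a spatial decomposition of the Poisson loop soup into pieces that are independent relative to $B(x_1,L)$ and $B(x_2,L)$, combined with a sprinkling argument that absorbs the residual coupling through long loops into the $\delta$-enlargement of the intensity.

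First, set $U_i=B(x_i,sL/3)$, so that $U_1\cap U_2=\emptyset$ (for $s$ smaller than an absolute constant the right-hand side of~\eqref{eq:decoupling} can be made $\geq 1$ by a suitable choice of $C,c$, making the statement trivial). By Poisson thinning, $\mathscr{L}^\alpha=\mathscr{L}^\alpha_1\sqcup\mathscr{L}^\alpha_2\sqcup\mathscr{L}^\alpha_3$, where $\mathscr{L}^\alpha_i$ ($i=1,2$) collects the loops contained in $U_i$ and $\mathscr{L}^\alpha_3$ the rest; these three point processes are independent. The functional $f_1$ depends only on $\mathscr{L}^\alpha_1$ and on the loops of $\mathscr{L}^\alpha_3$ that visit $B_1$, and symmetrically for $f_2$, so the only source of coupling between $f_1$ and $f_2$ is $\mathscr{L}^\alpha_3$. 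Any loop in $\mathscr{L}^\alpha_3$ that hits $B_i$ must have diameter at least $sL/3-L$, and the standard capacity estimate $\mu(\ell:\ell\cap B_i\neq\emptyset,\,\mathrm{diam}(\ell)\geq r)\lesssim \mathrm{cap}(B_i)/r^{d-2}$ bounds the expected number of such loops by $O(\alpha/s^{d-2})$.

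Next, realize $\mathscr{L}^{\alpha+\delta}=\mathscr{L}^\alpha\cup\widetilde{\mathscr{L}}^\delta$ with an independent $\delta$-soup $\widetilde{\mathscr{L}}^\delta$, and let $\widetilde{\mathscr{L}}^\delta_2$ denote its loops contained in $U_2$. On the coupling event $\mathcal{E}$ that $\mathcal{R}(\mathscr{L}^\alpha_3)\cap B_2\subset\mathcal{R}(\widetilde{\mathscr{L}}^\delta_2)$, monotonicity of $f_2$ gives $f_2(\mathcal{R}^\alpha)\leq f_2\bigl(\mathcal{R}(\mathscr{L}^\alpha_2\cup\widetilde{\mathscr{L}}^\delta_2)\bigr)$, and the latter is independent of $(\mathscr{L}^\alpha_1,\mathscr{L}^\alpha_3)$; hence
\[
\mathbb{E}\bigl[f_1\,f_2\,\mathbf{1}_{\mathcal{E}}\bigr]\leq \mathbb{E}^\alpha[f_1]\cdot\mathbb{E}\bigl[f_2\bigl(\mathcal{R}(\mathscr{L}^\alpha_2\cup\widetilde{\mathscr{L}}^\delta_2)\bigr)\bigr]\leq \mathbb{E}^\alpha[f_1]\,\mathbb{E}^{\alpha+\delta}[f_2],
\]
using $\mathscr{L}^\alpha_2\cup\widetilde{\mathscr{L}}^\delta_2\subset\mathscr{L}^{\alpha+\delta}$ in the last step. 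Then~\eqref{eq:decoupling} reduces to the tail bound $P[\mathcal{E}^c]\leq C\exp\bigl(\alpha-c\sqrt{\delta}\,s^{d-2}\bigr)$.

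The principal obstacle lies precisely in this last bound, which is substantially stronger than the polynomial estimate $\alpha/s^{d-2}$ produced by Markov's inequality applied to the Poisson count of long loops. The plan is to combine a Chernoff/exponential-tilt argument for the Poisson process of long loops with an exponential moment estimate for the number of $B_2$-vertices they cover, and to compare against the Poisson count/capacity of loops in the sprinkled soup $\widetilde{\mathscr{L}}^\delta_2$. The $\sqrt{\delta}$ in the rate is the signature of a Legendre-type optimization for Poisson large deviations, balancing the exponential tilt against the sprinkling magnitude, while the $\exp(\alpha)$ prefactor absorbs a Radon--Nikodym cost when the $\alpha$-law of long-loop hits on $B_2$ is compared with its distribution under the sprinkled soup. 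If a direct one-step estimate proves too weak, a fallback is to iterate the decoupling over dyadic scales between $L$ and $sL$ so that a microscopic exponential gain at each scale accumulates to the required macroscopic rate.
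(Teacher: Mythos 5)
Your reduction to the tail bound $P[\mathcal E^c]\leq C\exp(\alpha-c\sqrt\delta\,s^{d-2})$ is where the argument breaks, and it breaks for a structural reason, not a technical one. You realize $\mathscr L^{\alpha+\delta}=\mathscr L^\alpha\cup\widetilde{\mathscr L}^\delta$ with $\widetilde{\mathscr L}^\delta$ \emph{independent} of $\mathscr L^\alpha$, and then ask for the event $\mathcal E=\{\mathcal R(\mathscr L^\alpha_3)\cap B_2\subset \mathcal R(\widetilde{\mathscr L}^\delta_2)\}$ to fail with stretched-exponentially small probability. But with probability of order $\alpha s^{2-d}$ there is a long loop of $\mathscr L^\alpha_3$ hitting $B_2$, and conditionally on that, an independent $\delta$-soup covers its trace in $B_2$ with probability bounded away from $1$ (indeed very small for small $\delta$); hence $P[\mathcal E^c]\geq c\,\alpha\,s^{2-d}$, which is polynomial in $s$ and far larger than $\exp(-c\sqrt\delta\,s^{d-2})$. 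No Chernoff or exponential-tilt argument for the Poisson count of long loops can repair this: the obstruction is not a large-deviation event but the independence of the sprinkled soup. Conversely, if you make $\widetilde{\mathscr L}^\delta_2$ \emph{depend} on $\mathscr L^\alpha_3$ so as to cover its $B_2$-trace, your factorization step collapses, because $f_1$ also depends on $\mathscr L^\alpha_3$ (through the $B_1$-portions of the long loops), so $f_2(\mathcal R(\mathscr L^\alpha_2\cup\widetilde{\mathscr L}^\delta_2))$ is no longer independent of $f_1$. Your final paragraph acknowledges the bound as the ``principal obstacle'' but offers only heuristics, so the mathematical core of the theorem is missing from the proposal.

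The missing ideas, which constitute essentially the whole proof in the paper, are the following. First, one decomposes the long loops into excursions between $S_1=\dint\ball(x_1,L)$ and $S_1'=\dint\ball(x_1,rL)$ (not into loops contained/not contained in a large box around $x_2$): conditionally on the multiset $\mathcal E$ of excursion endpoints, the inner excursions (which, together with the loops inside $\ball(x_1,rL)$, determine $f_1$) and the outer excursions (which are all that long loops contribute to $B_2$) are \emph{conditionally independent} random walk bridges; this is what legitimately separates $f_1$ from the influence of the long loops near $x_2$. Second, the number $\mathcal Z$ of such excursions satisfies $\P[\mathcal Z\geq k]\leq e^{\alpha-k}$. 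Third --- and this is the heart of the matter --- the range in $B_2$ of a \emph{single} bridge from $S_1'$ to $S_1$ is stochastically dominated by the range in $B_2$ of an independent loop soup of intensity $\delta'=\delta/k$, up to an event of probability at most $Cs^{2(2-d)}\exp(-c\delta' s^{2(d-2)})$; this is proved by decomposing both the bridge and the $\delta'$-soup into excursions between $\dint\ball(x_2,L)$ and $\dint\ball(x_2,rL)$ and comparing the intensity measures of the induced excursion point processes, the domination holding as long as the number of these excursions is at most of order $\delta'(s/r)^{2(d-2)}$. Applying this to each of the at most $k$ bridges, using monotonicity of $f_2$, and optimizing $k=\sqrt\delta\,s^{d-2}$ against the $e^{\alpha-k}$ bound produces exactly the error term $C\exp(\alpha-c\sqrt\delta\,s^{d-2})$. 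Your intuition that the $\sqrt\delta$ comes from balancing the number of long-loop excursions against the sprinkling allotted to each is correct, but without the conditional-independence structure given the excursion endpoints and without the per-bridge stochastic domination via comparison of excursion intensities, the proposal does not yield the theorem.
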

It turns out that the decoupling inequalities of Theorem~\ref{thm:decoupling} are strong enough to obtain the same results about the infinite cluster of $\mathcal V^\alpha$ 
as those derived for the class of models from \cite{DRS12}. 
More precisely, in Section~\ref{sec:correlated-percolations}, after recalling the assumptions from \cite{DRS12}, 
we prove that condition \ppp{} on spatial correlations can be relaxed, cf.\ condition \d{} in Section~\ref{sec:correlated-percolations}, 
without any effect on the conclusions of \cite{DRS12} 
and of \cite{PRS,S14} where the framework of \cite{DRS12} was further used, see Theorem~\ref{thm:badseed:proba} and Corollary~\ref{cor:P1-S2:D}. 
Crucially, even though the vacant set $\mathcal V^\alpha$ does not satisfy condition \ppp{}, it does satisfy the weaker condition \d{} by Theorem~\ref{thm:decoupling} (see Remark~\ref{rem:d}(4)). 

Furthermore, let us emphasize that condition \d{} is not only weaker than \ppp{}, but also more natural, since it 
postulates decorrelation of local events occuring in large boxes only when the boxes are far apart.
All in all, we believe that Theorem~\ref{thm:badseed:proba} and Corollary~\ref{cor:P1-S2:D} are of independent importance beyond their application in the present paper, 
nevertheless, we postpone their formulation to Section~\ref{sec:correlated-percolations} because of a large amount of necessary notation.

Incidentally, the results of Chang \cite{Ch15} about the geometry of the infinite cluster in the range of the loop soup 
can now be directly deduced as a special case of Corollary~\ref{cor:P1-S2:D} (and Theorem~\ref{thm:decoupling}). 
\begin{remark}
It is natural to ask if the error term of decoupling inequalities \eqref{eq:decoupling} and \eqref{eq:decoupling:decreasing} is optimal. 
We believe it is not, but do not know a good heuristics. 
Our proof is based on a delicate interplay between probabilities of two rare events (excess in the number of large loop excursions near $x_1$, resp., $x_2$) 
and it looks so that our result is optimal for the method, see Remark~\ref{rem:decoupling:proof}. 
For the application of Theorem~\ref{thm:decoupling} in this paper (Theorem~\ref{thm:geometry-infinite-cluster}), 
an error term in the form $C\exp\left(-c\,\delta^\beta\,s^\gamma\right)$ with some $\beta,\gamma>0$ would suffice, see Corollary~\ref{cor:P1-S2:D} and Remark~\ref{rem:DRS-conditions-mathcalV}.
\end{remark}

\medskip

Our next result proves that for small enough values of $\alpha$, the vacant set $\mathcal V^\alpha$ contains 
with high probabilitity a unique giant cluster in all large enough boxes. 
In particular, it implies that the supercritical phase is non-trivial ($\alpha_*>0$).

\begin{theorem}\label{thm:lu}(Local uniqueness)
For any $d\geq 3$ there exist $\alpha_1>0$, $c=c(d)>0$ and $C=C(d)<\infty$ such that for all $0\leq\alpha\leq\alpha_1$ and $n\geq 1$, 
\begin{equation}\label{eq:lu:1}
\P\left[\begin{array}{c}\text{the infinite connected component of $\mathcal V^\alpha$}\\ \text{intersects $\ball(0,n)$}\end{array}\right] \geq 1 - Ce^{-n^c}
\end{equation}
and 
\begin{equation}\label{eq:lu:2}
\P\left[\begin{array}{c}\text{any two connected subsets of $\mathcal V^\alpha\cap \ball(0,n)$ with}\\ 
\text{diameter $\geq \frac{n}{10}$ are connected in $\mathcal V^\alpha\cap \ball(0,2n)$}\end{array}\right] \geq 1 - Ce^{-n^c}.
\end{equation}
\end{theorem}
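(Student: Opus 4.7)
The plan is to derive Theorem~\ref{thm:lu} as a direct application of the general local uniqueness framework (Corollary~\ref{cor:P1-S2:D}) to the monotone family $\{\mathcal V^\alpha\}_{\alpha>0}$. This reduces the problem to verifying two hypotheses for $\alpha$ small: (a) the weak decoupling condition \d{} of Section~\ref{sec:correlated-percolations}, and (b) a one-scale seed input --- a suitable local uniqueness event at some fixed scale $L_0$ has probability at least $1-\eta$, where $\eta$ depends only on $d$ and on the constants appearing in \d{}.

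Hypothesis (a) follows immediately from Theorem~\ref{thm:decoupling}(2): every local event measurable with respect to $\mathcal V^\alpha$ restricted to a ball is a decreasing function of $(\mathds{1}_{x\in\mathcal R^\alpha})_x$ restricted to that ball, so applying \eqref{eq:decoupling:decreasing} (and its symmetric counterpart obtained by swapping the roles of $f_1,f_2$) yields decorrelation with error $C\exp(\alpha-c\sqrt{\delta}s^{d-2})$, which matches the form required by \d{} (see Remark~\ref{rem:d}(4)). For (b) I shall use the strongest possible seed event, $\{\ball(0,L_0)\subset \mathcal V^\alpha\}$, which trivially implies every reasonable local uniqueness event at scale $L_0$. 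Its probability is given exactly by
\[
\P\bigl[\ball(0,L_0)\subset\mathcal V^\alpha\bigr]\;=\;\exp\bigl(-\alpha\,\mu(H_{\ball(0,L_0)})\bigr),\qquad H_A:=\{\ell:\ell\cap A\neq\emptyset\},
\]
and $\mu(H_{\ball(0,L_0)})\leq |\ball(0,L_0)|\,\mu(H_{\{0\}})<\infty$ since the mass of loops through a given vertex is finite in $d\geq 3$ (standard consequence of the transience of simple random walk). Consequently, once $L_0$ is fixed to beat the renormalization constants, taking $\alpha\leq \alpha_1(L_0)$ small makes this probability arbitrarily close to $1$.

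With (a) and (b) in hand, Corollary~\ref{cor:P1-S2:D} propagates the seed from scale $L_0$ to all scales $L_k\to\infty$ with stretched exponential failure probability, producing both \eqref{eq:lu:1} and \eqref{eq:lu:2}. The step I expect to require the most care is the order of quantifiers: $L_0$ must be chosen \emph{first}, as a function only of the constants $C,c$ of Theorem~\ref{thm:decoupling} and of $d$, and only afterwards may $\alpha_1=\alpha_1(L_0)$ be chosen small. The bound above accommodates this, since $\eta$ can be driven to $0$ by shrinking $\alpha$ at fixed $L_0$ rather than by enlarging $L_0$. As a byproduct, \eqref{eq:lu:1} gives $\alpha_*\geq\alpha_1>0$, establishing non-triviality of the supercritical phase as announced.
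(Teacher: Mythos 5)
There is a genuine gap, and it is structural: your propagation device does not exist in the framework you invoke. Corollary~\ref{cor:P1-S2:D} takes the local uniqueness condition \s{} --- which \emph{is} precisely \eqref{eq:lu:1}--\eqref{eq:lu:2} --- as a hypothesis and converts it, together with \p{}, \pp{}, \d{}, \sss{}, into geometric properties of the infinite cluster; it contains no mechanism for upgrading a one-scale seed estimate into \s{}, so using it to prove Theorem~\ref{thm:lu} is circular. The only renormalization statement available in Section~\ref{sec:correlated-percolations} is the cascading-events Theorem~\ref{thm:badseed:proba}, and reinterpreting your plan through it runs into three problems. First, its hypothesis \eqref{eq:decoupling:condition} requires the bad seed to become unlikely as $L_0\to\infty$ at a \emph{fixed} intensity $u'$; for your seed $\{\ball(0,L_0)\subset\mathcal V^\alpha\}$ the bad event ``the box meets the range'' has probability tending to $1$ as $L_0\to\infty$ for any fixed $\alpha>0$, because of the positive density of small loops --- this is exactly the obstruction pointed out in Section~\ref{sec:proof-local-uniqueness} when explaining why the decoupling-based route of \cite{DRS-AIHP} is not followed. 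Your ``fix $L_0$ first, then shrink $\alpha$'' ordering is not the quantifier structure of Theorem~\ref{thm:badseed:proba}, whose measures and parameter interval are fixed in advance. Second, even granting a base-case seed bound at one scale, the conclusion \eqref{eq:decoupling:result} only controls hierarchical families of bad $L_0$-boxes; deducing from this the two statements \eqref{eq:lu:1} and \eqref{eq:lu:2} (existence of a macroscopic vacant cluster and gluing of all large vacant clusters) is a substantial separate argument that your proposal omits entirely --- it cannot be waved through by saying the fully-vacant-box event ``implies every reasonable local uniqueness event''. Third, even if all of this were carried out, the scales $L_k$ in Theorem~\ref{thm:badseed:proba} grow faster than exponentially, so the resulting failure probability at scale $n$ is far weaker than the stretched-exponential bound $Ce^{-n^c}$ claimed in the theorem; the paper states this explicitly as a reason for abandoning the decoupling route.

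The paper's actual proof makes no use of Theorem~\ref{thm:decoupling} or of Section~\ref{sec:correlated-percolations} at all. It declares an $L_0$-box good if its frame $\edges(x')$ is vacant and the cumulative occupation time on its boundary is at most $R^{d-1}$ (Definition~\ref{def:good}); it proves directly, via dyadic-tree embeddings, the excursion decomposition (Proposition~\ref{prop:sampling-loopsoup}) and the large-deviation bound on the number of excursions (Lemma~\ref{l:excursions-AB-num}), that bad boxes do not form long $*$-paths (Lemma~\ref{l:bad-leaves}, Proposition~\ref{prop:bad-*paths}); and it then combines a ubiquitous good component (Corollary~\ref{cor:bad:*paths}) with a conditional surgery estimate (Lemma~\ref{l:surgery}) and an exploration algorithm to obtain \eqref{eq:lu:2}. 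If you want to salvage your approach, you would have to replace Corollary~\ref{cor:P1-S2:D} by a bespoke renormalization of good/bad boxes together with a gluing argument of this kind, and you would still lose the $e^{-n^c}$ rate.
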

Properties \eqref{eq:lu:1} and \eqref{eq:lu:2} appear as assumption \s{} in the framework of \cite{DRS12}, see Section~\ref{sec:correlated-percolations}. 
The remaining conditions (ergodicity, monotonicity, continuity) from \cite{DRS12} are easily verified for $\mathcal V^\alpha$, see Remark~\ref{rem:DRS-conditions-mathcalV}. 
As a result, we can summarize the main conclusions about the geometry of the infinite cluster of $\mathcal V^\alpha$ as follows. 
(This is an immediate application of Theorem~\ref{thm:decoupling}, Corollary~\ref{cor:P1-S2:D} and Remark~\ref{rem:DRS-conditions-mathcalV}.)
\begin{theorem}\label{thm:geometry-infinite-cluster}
Let $d\geq 3$ and $\alpha_1>0$. If \eqref{eq:lu:1} and \eqref{eq:lu:2} hold 
for all $\alpha<\alpha_1$ with constants $c=c(d,\alpha)>0$ and $C=C(d,\alpha)<\infty$, then 
the unique infinite cluster of $\mathcal V^\alpha$ satisfies all the results from \cite{DRS12,PRS,S14} for all $\alpha<\alpha_1$, more precisely, 
\begin{itemize}\itemsep0pt
\item
Theorems~2.3 (chemical distances) and 2.5 (shape theorem) in \cite{DRS12},
\item
Theorem~1.1 in \cite{PRS} (quenched invariance principle),
\item
Theorem~1.13 (Barlow's ball regularity), Corollary~1.14 (quenched Gaussian heat kernel bounds, elliptic and parabolic Harnack inequalities), 
Theorem~1.19 (quenched local CLT), as well as Theorems~1.16--1.18, 1.20 in \cite{S14}.
\end{itemize}
We refer the reader to the introduction of \cite{S14} for the precise statements of these results.
\end{theorem}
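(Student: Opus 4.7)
The plan is to recognize that Theorem~\ref{thm:geometry-infinite-cluster} is essentially an assembly statement: given the decoupling inequality of Theorem~\ref{thm:decoupling} and the local uniqueness estimates \eqref{eq:lu:1}--\eqref{eq:lu:2}, one just needs to check that $\mathcal V^\alpha$ fits into the relaxed framework developed in Section~\ref{sec:correlated-percolations}, and then quote Corollary~\ref{cor:P1-S2:D}.

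First, I would verify the structural hypotheses \p, \pp{} (ergodicity, monotonicity, continuity/FKG-type properties) for the family $\{\mathcal V^\alpha\}_{\alpha>0}$. Monotonicity is immediate: $\mathcal V^\alpha$ is pointwise decreasing in $\alpha$, because the loop soup $\mathscr L^\alpha$ can be coupled as an increasing family (standard coupling of Poisson processes on the loop space). Ergodicity under lattice translations follows from the translation invariance of the intensity measure $\alpha\mu$ and the fact that disjoint spatial regions see independent restrictions of the Poisson point process. Continuity in $\alpha$ follows from the fact that the number of loops intersecting any finite box has a Poisson distribution that depends continuously on $\alpha$. These routine verifications are collected in Remark~\ref{rem:DRS-conditions-mathcalV}.

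Second, the local uniqueness assumption \s{} of \cite{DRS12} is exactly the pair of bounds \eqref{eq:lu:1}--\eqref{eq:lu:2}, which we are assuming.

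Third, I would verify the relaxed decoupling condition \d{}. Since $\mathcal V^\alpha = \Z^d\setminus \mathcal R^\alpha$, any event on $\mathcal V^\alpha$ supported in a box corresponds to an event on $\mathcal R^\alpha$ supported in the same box, with monotonicity reversed. A decreasing function of $\mathds 1_{\mathcal V^\alpha}$ is an increasing function of $\mathds 1_{\mathcal R^\alpha}$ and vice versa, so the two cases of Theorem~\ref{thm:decoupling} translate directly into decoupling inequalities for $\mathcal V^\alpha$ with a sprinkled parameter, matching the form required by \d{}. The stretched-exponential error term $C\exp(\alpha-c\sqrt{\delta}s^{d-2})$ is of the shape $C\exp(-c\delta^\beta s^\gamma)$ needed for \d{} (with $\beta=1/2$, $\gamma=d-2$), up to the $\alpha$-dependent prefactor, which is absorbed into the constants since $\alpha<\alpha_1$ is bounded.

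With \p, \pp, \s{} and \d{} all verified, Corollary~\ref{cor:P1-S2:D} applies and gives every item listed in the theorem: the chemical distance and shape theorem of \cite{DRS12}, the quenched invariance principle of \cite{PRS}, and the Barlow regularity, heat-kernel bounds, Harnack inequalities and local CLT of \cite{S14}. The only subtle point is that the original framework uses condition \ppp, which $\mathcal V^\alpha$ fails (see Remark~\ref{rem:p3-loopsoup}); this is precisely the obstacle sidestepped by proving in Section~\ref{sec:correlated-percolations} that \d{} suffices in place of \ppp. That replacement in the general machinery is the main technical content; the present theorem is then a direct deduction.
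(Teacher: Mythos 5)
Your overall route is exactly the paper's: condition \s{} is the hypothesis \eqref{eq:lu:1}--\eqref{eq:lu:2}, condition \d{} comes from Theorem~\ref{thm:decoupling} via the increasing/decreasing flip between $\mathcal R^\alpha$ and $\mathcal V^\alpha$ (with the $e^\alpha$ prefactor absorbed), the remaining conditions are checked as in Remark~\ref{rem:DRS-conditions-mathcalV}, and the conclusion is read off from Corollary~\ref{cor:P1-S2:D}. So the architecture of your argument is correct and identical to the paper's one-line proof.

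However, two of your ``routine verifications'' are justified by arguments that do not work. First, ergodicity (\p{}): it is \emph{not} true that disjoint spatial regions see independent restrictions of the loop soup --- a single macroscopic loop can visit both regions, and indeed the correlations of $\mathds 1_{\mathcal V^\alpha}$ decay only polynomially. Ergodicity holds, but one needs a mixing-type argument (approximate local events by events depending only on loops of bounded diameter and let the separation grow), which is why the paper simply cites \cite[Proposition~3.2]{CS16} rather than invoking spatial independence. Second, continuity (\sss{}): the condition concerns $\eta(\alpha)=\P\left[0\in\set_\infty\right]$, which is not a local event, so continuity of the Poisson law of loop counts in finite boxes gives nothing directly. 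The paper's argument is the van den Berg--Keane one: continuity from one side because $\eta$ is a monotone limit of continuous monotone functions of $\alpha$ (probabilities of connection to $\dint\ball(0,n)$), and from the other side because the infinite cluster of $\mathcal V^\alpha$ is unique; positivity of $\eta$ on $(0,\alpha_1)$ also needs to be recorded (it follows from \eqref{eq:lu:1} together with ergodicity). With these two verifications repaired --- neither of which requires a new idea, only the correct classical arguments --- your deduction of the theorem from Corollary~\ref{cor:P1-S2:D} is complete and coincides with the paper's.
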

We strongly believe that properties \eqref{eq:lu:1} and \eqref{eq:lu:2} with some $c=c(d,\alpha)>0$ and $C=C(d,\alpha)<\infty$ hold 
for all $\alpha<\alpha_*$. 
This has been proven to hold for Bernoulli percolation (for all $p>p_c$, see \cite[(7.89)]{Grimmett}), 
the random interlacements (for all $u>0$, see \cite{RS:Transience}) 
and for the range of the loop soup (for all $\alpha>\alpha_c$, see \cite{Ch15}), 
but is still conjectured for the level sets of the Gaussian free field and for the vacant set of random interlacements. 
(Analogues of Theorem~\ref{thm:lu} are proved for the level sets of the Gaussian free field on $\Z^d$ in \cite{DRS12} 
and on transient graphs from a broad class in \cite{DPR18} and for the vacant set of random interlacements on $\Z^d$ in \cite{Teixeira} (for $d\geq 5$) and \cite{DRS-AIHP} (for $d\geq 3$).)

\medskip

\emph{Overview of the paper.} In Section~\ref{sec:notation-preliminaries} we collect basic definitions and classical results on random walks. 
In Section~\ref{sec:loop-excursions} we study the Poisson point process of loops that intersect two disjoint sets. Such loops can be cut into successive excursions 
between the two sets which are distributed as independent random walk bridges conditioned on their starting and ending points, see Proposition~\ref{prop:sampling-loopsoup}. 
In Section~\ref{sec:proof-decoupling} we prove Theorem~\ref{thm:decoupling} and in Section~\ref{sec:proof-local-uniqueness} Theorem~\ref{thm:lu}. 
Finally in Section~\ref{sec:correlated-percolations}, which can be read independently of all the other sections, 
we recall the general conditions on percolation models from \cite{DRS12}, formulate a weaker 
decoupling inequality \d{} and prove in Theorem~\ref{thm:badseed:proba} that the condition \ppp{} from \cite{DRS12} can be substituted by \d{} 
without any loss in conclusions. The punchline of Section~\ref{sec:correlated-percolations} is Corollary~\ref{cor:P1-S2:D}, 
which particularly gives Theorem~\ref{thm:geometry-infinite-cluster}.

\section{Notation and preliminaries}\label{sec:notation-preliminaries}

For $x\in\Z^d$, let $\|x\|$ and $\|x\|_1$ be the $\ell_\infty$-, resp., $\ell_1$-norm of $x$ and denote by $\ball(x,r)$ the $\ell_\infty$ closed ball in $\Z^d$ of radius $r$ centered in $x$. 

\smallskip

For a set $A\subseteq\Z^d$, let $\dint A = \{y\in A:\|y'-y\|_1 = 1\text{ for some }y'\in \Z^d\setminus A\}$ be the interior boundary of $A$ 
and $\dext A = \{y\notin A:\|y'-y\|_1 = 1\text{ for some }y'\in A\}$ the exterior boundary of $A$.

\smallskip

A function $f:\{0,1\}^{\Z^d}\to\R$ is called increasing if $f(\omega)\leq f(\omega')$ for any $\omega,\omega'\in\{0,1\}^{\Z^d}$ such that  
$\omega_x\leq \omega_x'$ for all $x\in\Z^d$. A subset $E$ of $\{0,1\}^{\Z^d}$ is called increasing if its indicator $\mathds{1}_E$ is increasing 
($\mathds{1}_E(\omega) = 1$ if $\omega\in E$ and $0$ otherwise). A function $f$, resp., a set $E$, is called decreasing if $-f$, resp., $\{0,1\}^{\Z^d}\setminus E$, is increasing.

\smallskip

Let $W_+$ be the set of all infinite nearest neighbor paths on $\Z^d$ endowed with the $\sigma$-algebra generated by coordinate maps $X_n$, $n\in \N$.  
Denote by $\prw_x$ the law of a simple random walk on $\Z^d$ started at $x$ and by 
$g:\Z^d\times\Z^d\to\R$ the Green function of the simple random walk, 
$g(x,y) = \sum_{n=0}^\infty \prw_x[X_n = y]$. 
It is well known, see, e.g., \cite[Theorem~1.5.4]{LawlerRW}, that for any $d\geq 3$, there exist $c_g>0$ and $C_g<\infty$ such that 
\begin{equation}\label{eq:GF}
 c_g \, (\|x-y\|+1)^{2-d} \leq  g(x,y) \leq C_g \, (\|x-y\|+1)^{2-d}, \quad x,y\in\Z^d.
\end{equation}

\smallskip

For $A\subset \Z^d$ and a nearest neighbor path $w=(w_0,\ldots,w_N)$ on $\Z^d$, where $N\in\N_0\cup\{+\infty\}$, 
let $H_A(w) = \inf\{n \geq 0 \,:\, w_n \in A\}$ be the entrance time in $A$ 
and $\widetilde H_A(w) = \inf\{ n \geq 1 \, :\, w_n \in A\}$ the hitting time of $A$. 
The equilibrium measure of a finite set $A$ is defined by $e_A(x) = \prw_x[\widetilde H_A = \infty]\mathds{1}_{A}(x)$. 
Its total mass is the capacity of $A$, $\capa(A) = \sum_x e_A(x)$. The equilibrium measure of any finite set in dimensions $d\geq 3$ is non-zero 
and we denote by $\widetilde e_A$ the normalized equilibrium measure. 
The following relation between the entrance time probability, the Green function and the equilibrium measure is classical, see, e.g., \cite[(1.8)]{SznitmanAM}: 
\begin{equation}\label{eq:H-g-e}
P_x[H_A<\infty] = \sum_{y\in A} g(x,y) e_A(y) .
\end{equation}
By taking $x=0$ and $A=\dint \ball(0,n)$ in \eqref{eq:H-g-e} and using \eqref{eq:GF}, one easily gets the bounds on the capacity of balls:
\begin{equation}\label{eq:capa:ball}
c_c\,n^{d-2}\leq \capa\left(\ball(0,n)\right) = \capa\left(\dint \ball(0,n)\right)\leq C_c \, n^{d-2}.
\end{equation}
The following lemma and corollary are also standard. They will be used in the proof of Theorem~\ref{thm:decoupling}. 
\begin{lemma}\label{l:hitting}
There exist constants $c=c(d)>0$ and $C=C(d)<\infty$ such that 
\begin{enumerate}[leftmargin=*]
\item 
for all $n\geq 1$ and $x\notin \ball(0,n)$, 
\begin{equation}\label{eq:hitting1}
c\,\left(\frac{n}{\|x\|}\right)^{d-2}\leq \prw_x\left[H_{\ball(0,n)}<\infty\right] \leq C\,\left(\frac{n}{\|x\|}\right)^{d-2},
\end{equation}
\item
for all $n\geq 1$, $m>2n$, $A\subset \ball(0,n)$, $x\notin \ball(0,m)$ and $y\in A$, 
\begin{equation}\label{eq:hitting2}
c\,\widetilde e_A(y)\leq \prw_x\left[X_{H_A} = y~|~H_A<\infty\right] \leq C\,\widetilde e_A(y).
\end{equation}
\end{enumerate}
\end{lemma}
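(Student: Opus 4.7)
For (i), the plan is to apply \eqref{eq:H-g-e} with $A=\ball(0,n)$ and substitute the Green function bounds \eqref{eq:GF} and capacity estimate \eqref{eq:capa:ball}. For the lower bound, whenever $x \notin \ball(0,n)$ and $y \in \ball(0,n)$ one has $\|x-y\| \leq 2\|x\|$, so $g(x,y) \geq c\|x\|^{2-d}$; summing against $e_{\ball(0,n)}$ and using $\capa(\ball(0,n)) \geq c_c\, n^{d-2}$ yields the claim. For the upper bound, I would treat $\|x\| \geq 2n$ (where $\|x-y\| \geq \|x\|/2$ gives $g(x,y) \leq C\|x\|^{2-d}$) and $n \leq \|x\| < 2n$ (where $(n/\|x\|)^{d-2} \geq 2^{2-d}$ makes the trivial bound $\prw_x[\,\cdot\,]\leq 1$ already sufficient) separately.

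For (ii), the plan is to show that, under the hypotheses, both
\[
\prw_x[X_{H_A}=y,\,H_A<\infty] \asymp \|x\|^{2-d}\,e_A(y)
\qquad\text{and}\qquad
\prw_x[H_A<\infty] \asymp \|x\|^{2-d}\,\capa(A)
\]
hold uniformly in $A\subset \ball(0,n)$, $y \in A$, and $x \notin \ball(0,m)$ with $m>2n$, so that dividing and using $\widetilde e_A(y) = e_A(y)/\capa(A)$ produces \eqref{eq:hitting2}. The denominator bound follows from \eqref{eq:H-g-e} together with $g(x,z) \asymp \|x\|^{2-d}$ uniformly for $z \in A$ and $\|x\| > 2n$, exactly as in (i). For the numerator, I would combine the reversibility identity
\[
\prw_x[X_{H_A}=y,\,H_A<\infty] = \sum_{n \geq 1} \prw_y[X_n = x,\, \widetilde H_A > n],
\]
valid since SRW has symmetric transitions, with the strong Markov property at $\widetilde H_A$ applied to $g(y,x) = \sum_{n\geq 1}\prw_y[X_n=x]$, to obtain the representation
\[
\prw_x[X_{H_A}=y,\,H_A<\infty] = g(y,x)\,e_A(y) + \sum_{z\in A}\prw_y\bigl[\widetilde H_A<\infty,\,X_{\widetilde H_A}=z\bigr]\bigl(g(y,x)-g(z,x)\bigr).
\]

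The hard part will be extracting the sharp factor $e_A(y)$ from this last formula: since \eqref{eq:GF} only pins down $g$ up to multiplicative constants, the difference $g(y,x)-g(z,x)$ could a priori be of the same order as $g(y,x)$ itself, which would obscure the lower bound precisely when $e_A(y)$ is small. To bypass this, I would exploit that $u_y(x) := \prw_x[X_{H_A}=y,\,H_A<\infty]$ is positive and harmonic on $\Z^d \setminus A$ and apply the discrete Harnack inequality on dyadic annuli $\ball(0,2R)\setminus \ball(0,R/2)$ for $R\geq 2n$: Harnack comparability of both $u_y(\cdot)$ and $\prw_\cdot[H_A<\infty]$ across each such annulus, combined with the classical limit $\lim_{\|x\|\to\infty} u_y(x)/\prw_x[H_A<\infty] = \widetilde e_A(y)$ (which follows from the refined asymptotic $g(z,x)/g(0,x)\to 1$ uniformly for $z\in\ball(0,n)$), pins down the ratio as $\asymp \widetilde e_A(y)$ uniformly on $\|x\|>m>2n$.
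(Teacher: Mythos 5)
Part (1) of your proposal is correct and is exactly the paper's (one-line) argument: plug the Green function bounds \eqref{eq:GF} and the capacity estimate \eqref{eq:capa:ball} into \eqref{eq:H-g-e}, treating $\|x\|\geq 2n$ and $n<\|x\|<2n$ separately. For part (2), however, the paper does not argue from scratch: it quotes Lawler's Theorem~2.1.3, which gives $\prw_x[X_{H_A}=y\mid H_A<\infty]=\widetilde e_A(y)\bigl(1+O(n/\|x\|)\bigr)$ with a \emph{multiplicative} error, and uses the Harnack principle only to cover the bounded range $2n<\|x\|\leq Kn$ by a Harnack chain of boundedly many steps.

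Your attempt to reprove this is where the gap lies. The reversal identity and the decomposition via the strong Markov property at $\widetilde H_A$ are correct, and you correctly identify the obstacle: the correction term $\sum_{z\in A}\prw_y[\widetilde H_A<\infty,\,X_{\widetilde H_A}=z]\,(g(y,x)-g(z,x))$ is, even with the refined asymptotics $g(z,x)=g(0,x)(1+O(n/\|x\|))$, only bounded by $C\,(n/\|x\|)\,g(0,x)\,\prw_y[\widetilde H_A<\infty]$, i.e.\ it yields an \emph{additive} error of order $n/\|x\|$ in the conditional hitting probability rather than an error proportional to $\widetilde e_A(y)$; since $\widetilde e_A(y)$ can be arbitrarily small while $n/\|x\|$ may be of order one (the lemma is asserted for all $\|x\|>2n$), this does not give the upper bound in \eqref{eq:hitting2}. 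Your proposed bypass does not close the gap: Harnack on a dyadic annulus only says that the ratio $u_y(\cdot)/\prw_{\cdot}[H_A<\infty]$ varies by a bounded factor \emph{within one annulus} (with a constant uniform in the scale), but to transport the qualitative limit $u_y(x)/\prw_x[H_A<\infty]\to\widetilde e_A(y)$ from infinity down to a point with $\|x\|$ comparable to $n$ you would have to chain through an unbounded number of annuli, and the Harnack constants multiply, so no constant $C(d)$ comes out; the limit itself is moreover a fixed-$A$, fixed-$y$ statement with no rate proportional to $\widetilde e_A(y)$. In effect, "the ratio at finite distance is comparable to its value at infinity, uniformly in $A$ and $y$" is a boundary-Harnack-type statement essentially equivalent to the lemma. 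A self-contained fix along your lines would be to decompose the reversed walk at its exit from $\ball(0,\|x\|/2)$, show $\prw_y[\text{exit before }\widetilde H_A]\asymp e_A(y)$ for $\|x\|\geq Kn$ via \eqref{eq:hitting1} and the strong Markov property, bound the Green function of $\Z^d\setminus A$ between that sphere and $x$ by $\asymp\|x\|^{2-d}$, and treat $2n<\|x\|<Kn$ by a bounded Harnack chain --- which is in essence reproving the cited Theorem~2.1.3.
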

\begin{proof}
The first statement is immediate from \eqref{eq:GF}, \eqref{eq:H-g-e} and \eqref{eq:capa:ball}.
The second follows from \cite[Theorem~2.1.3]{LawlerRW} and the Harnack principle (see, e.g., \cite[Theorem~1.7.6]{LawlerRW}). 
\end{proof}
\begin{corollary}
Let $L\geq 1$, $2< r\leq \frac12 s$ be integers, $x_1,x_2\in\Z^d$ with $\|x_1-x_2\|=sL$, 
and define $S_i = \dint\ball(x_i,L)$ and $S_i' = \dint\ball(x_i, rL)$, $i\in\{1,2\}$. 

There exist constants $c=c(d)>0$ and $C=C(d)<\infty$ such that for all $r>C$, $x\in S_1'$ and $y\in S_2$, 
\begin{equation}\label{eq:hitting3}
c\,\prw_x\left[H_{S_2}<\infty\right]\,\widetilde e_{S_2}(y)
\leq \prw_x\left[H_{S_2}<H_{S_1}, X_{H_{S_2}} = y\right] \leq C\,\prw_x\left[H_{S_2}<\infty\right]\,\widetilde e_{S_2}(y).
\end{equation}
\end{corollary}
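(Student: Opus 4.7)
The proof plan is to decompose $\prw_x[H_{S_2}<\infty, X_{H_{S_2}}=y]$ according to whether $S_1$ is visited before $S_2$ or not, and to show that the first visit-$S_1$-first term is, for $r$ large enough, a small fraction of the total. The upper bound in \eqref{eq:hitting3} is essentially free, since I can drop the event $\{H_{S_2}<H_{S_1}\}$ and apply Lemma~\ref{l:hitting}(2) directly: the hypothesis of that lemma is satisfied because $\|x-x_2\|\geq(s-r)L\geq \tfrac{s}{2}L\geq 3L$, so $x\notin\ball(x_2,3L)$ while $S_2\subset \ball(x_2,L)$, and the lemma gives $\prw_x[H_{S_2}<\infty,X_{H_{S_2}}=y]\leq C\,\widetilde e_{S_2}(y)\,\prw_x[H_{S_2}<\infty]$.

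For the lower bound I would write
\[
\prw_x\left[H_{S_2}<\infty, X_{H_{S_2}}=y\right] = \prw_x\left[H_{S_2}<H_{S_1}, X_{H_{S_2}}=y\right] + \prw_x\left[H_{S_1}<H_{S_2}<\infty, X_{H_{S_2}}=y\right]
\]
(the identity is clean because $S_1\cap S_2=\emptyset$, so equality of entrance times is impossible on the relevant event). The left-hand side is bounded below by $c\,\widetilde e_{S_2}(y)\,\prw_x[H_{S_2}<\infty]$ via Lemma~\ref{l:hitting}(2), as above. The plan is then to show that the second term on the right is at most $\tfrac{c}{2}\widetilde e_{S_2}(y)\,\prw_x[H_{S_2}<\infty]$ provided $r$ exceeds some absolute constant $C$; subtracting yields the desired lower bound.

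To estimate the unwanted term, I would apply the strong Markov property at $H_{S_1}$ and then Lemma~\ref{l:hitting}(2) to the walk restarted from a point $z\in S_1$: since $\|z-x_2\|\geq(s-1)L\geq 3L>2L$, the lemma applies and gives $\prw_z[H_{S_2}<\infty, X_{H_{S_2}}=y]\leq C\widetilde e_{S_2}(y)\prw_z[H_{S_2}<\infty]$. Hence
\[
\prw_x\left[H_{S_1}<H_{S_2}<\infty,X_{H_{S_2}}=y\right] \leq C\,\widetilde e_{S_2}(y)\,\prw_x[H_{S_1}<\infty]\,\max_{z\in S_1}\prw_z[H_{S_2}<\infty].
\]
Now Lemma~\ref{l:hitting}(1), applied once with center $x_1$ (since $\|x-x_1\|=rL$ and $\ball(x_1,L)\supset S_1$) and once with center $x_2$ (since $\|z-x_2\|\asymp sL$ for $z\in S_1$, and similarly $\|x-x_2\|\asymp sL$ because $r\leq s/2$), yields $\prw_x[H_{S_1}<\infty]\leq C r^{-(d-2)}$, $\max_{z\in S_1}\prw_z[H_{S_2}<\infty]\leq C s^{-(d-2)}$, and $\prw_x[H_{S_2}<\infty]\geq c s^{-(d-2)}$. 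Combining these three bounds produces a factor $C r^{-(d-2)}$ in front of $\widetilde e_{S_2}(y)\prw_x[H_{S_2}<\infty]$, which is smaller than $c/2$ for all $r$ above some threshold.

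There is no real obstacle here: everything follows from Lemma~\ref{l:hitting} together with the strong Markov property, and the only place where the assumption $r>C$ is used is to absorb the constants in the $r^{-(d-2)}$ term. The mild subtlety worth stating carefully is the geometric verification that $\|x-x_2\|$ is comparable to $sL$ (using $r\leq s/2$), which is what makes $\prw_x[H_{S_2}<\infty]$ comparable to $s^{-(d-2)}$ and thus allows the $s^{-(d-2)}$ coming from the $S_1\to S_2$ crossing probability to be absorbed into $\prw_x[H_{S_2}<\infty]$.
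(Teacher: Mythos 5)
Your proposal is correct and follows essentially the same route as the paper, whose proof is the one-line remark that the claim is immediate from Lemma~\ref{l:hitting} and the Markov property: your write-up is exactly the natural expansion of that, dropping the constraint $\{H_{S_2}<H_{S_1}\}$ for the upper bound and, for the lower bound, using the strong Markov property at $H_{S_1}$ together with Lemma~\ref{l:hitting}(1) to show that trajectories visiting $S_1$ first contribute only a factor of order $r^{2-d}$ times $\prw_x\left[H_{S_2}<\infty\right]\widetilde e_{S_2}(y)$, which is absorbed for $r$ large.
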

\begin{proof}
Immediate from Lemma~\ref{l:hitting} and the Markov property of random walk.
\end{proof}

\smallskip

For $A\subset\Z^d$, $x\notin A$, $y\in A$, consider the law 
\[
\prw_{x,y}^A = \prw_x\left[(X_0,\ldots,X_{H_A})=\cdot~|~X_{H_A} = y\right]
\]
of a random walk path (bridge) from $x$ conditioned to enter $A$ at $y$.

\bigskip

The set of all based loops is denoted by $\dot{\mathfrak L}$ and all loops by $\mathfrak L$. 
For a loop $\ell\in\mathfrak L$ and $A\subset\Z^d$, we write $\ell\cap A\neq\emptyset$ if 
some (and hence all) representative from the equivalence class $\ell$ contains at least one vertex in $A$. 
If $A=\{x\}$, then we instead write $x\in\ell$. If $\mathcal L$ is a subset of $\mathfrak L$ and $x\in\Z^d$, then we write $x\in\mathcal L$ 
if there exists $\ell\in\mathcal L$ such that $x\in\ell$.

We denote by $\pi:\dot{\mathfrak L}\to\mathfrak L$ the canonical projection, i.e., $\pi(\dot\ell)$ is the equivalence class of $\dot\ell$.
Consider the measure $\dot\mu$ on $\dot{\mathfrak L}$ defined by 
\begin{equation}\label{def:dotmu}
\dot\mu(\dot\ell)= \frac{1}{n}\,\prw_{x_1}\left[(X_0,\ldots, X_{n-1}) = \dot\ell,\, X_n = x_1\right] 
=\frac{1}{n}\,\left(\frac{1}{2d}\right)^{n}
,\quad \dot\ell = (x_1,\ldots, x_n),
\end{equation}
and denote by $\mu$ the push-forward of $\dot\mu$ on $\mathfrak L$ by $\pi$.

For $\alpha>0$ let 
\begin{itemize}\itemsep0pt
\item
$\mathscr L^\alpha$ be the Poisson point process of loops with intensity measure $\alpha\mu$,
\item
$\mathcal N^\alpha$ the field of cumulative local times for the loops in $\mathscr L^\alpha$,
\item
$\mathcal V^\alpha = \{x\in\Z^d:\mathcal N^\alpha(x) = 0\}$ the vacant set for $\mathscr L^\alpha$.
\end{itemize}
We assume that these processes are defined on a probability space $(K,\mathcal K,\P)$, 
whose precise description is irrelevant and also use $\P^\alpha$ and $\mathbb E^\alpha$ to denote the law, resp., expectation, 
of $\{\mathds{1}_{x\in\mathscr L^\alpha}\}_{x\in\Z^d}$ on $\{0,1\}^{\Z^d}$. 

\medskip

Constants that only depend on the dimension (and in Seciton~\ref{sec:correlated-percolations} possibly also on $a$ and $b$) 
are denoted by $c$ and $C$. Their value may change from line to line and even within lines.

\section{Decomposition of loops in excursions}\label{sec:loop-excursions}

In this section we study properties of loops that visit two disjoint sets $A,B\subset\Z^d$. 
Any such loop can be cut into alternating excursions from $A$ to $B$ and from $B$ to $A$, 
which, given their starting and ending points, are distributed as independent random walk bridges. 
This gives a useful way to sample the Poisson point process of loops that visit $A$ and $B$, 
see Proposition~\ref{prop:sampling-loopsoup}. Furthermore, the total number of loop excursions 
is unlikely to be large if $A$ and $B$ are far apart, see Lemma~\ref{l:excursions-AB-num}. 

\smallskip

Let $A,B\subset\Z^d$ be disjoint and consider the set of all loops that visit $A$ and $B$:
\begin{equation*}
\mathfrak{L}_{A,B}:=\left\{\ell\in\mathfrak{L}: \ell\cap A\neq\emptyset,\ell\cap B \neq \emptyset \right\}.
\end{equation*}
We first recall a useful representation of the measure $\mu$ on $\mathfrak L_{A,B}$ from \cite{CS16}.

\begin{definition}
For each $\ell\in \mathfrak L$, let $L(A,B)(\ell)$ be the set of all based loops $\dot\ell = (x_1,\ldots, x_n)$ from the 
equivalence class $\ell$ such that 
\begin{itemize}\itemsep0pt
\item
$x_1\in A$, 
\item
there exists $i$ such that $x_i\in B$ and for all $j>i$ (if exists) $x_j\notin (A\cup B)$. 
\end{itemize}
\end{definition}
Note that 
\begin{itemize}\itemsep0pt
\item
$L(A,B)(\ell)\cap L(A,B)(\ell') = \emptyset$ if $\ell\neq \ell'$,
\item
$L(A,B)(\ell)\neq\emptyset$ if and only if $\ell\in\mathfrak L_{A,B}$. 
\end{itemize}
Any loop in $\mathfrak L_{A,B}$ can be decomposed into alternating nearest neighbor excursions from $A$ to $B$ and from $B$ to $A$. 
For any $\ell\in \mathfrak L_{A,B}$ and $\dot\ell = (x_1,\dots,x_n)\in L(A,B)(\ell)$, we define the entrance times 
\begin{equation}\label{eq:excursions-times}
\begin{split}
\phi_1(\dot\ell) &= 1,\\ 
\psi_1(\dot\ell) & = \inf\left\{j>\phi_1(\dot\ell)~:~x_j\in B\right\},\\
\phi_k(\dot\ell) & = \inf\left\{j>\psi_{k-1}(\dot\ell)~:~x_j\in A\right\},\\ 
\psi_k(\dot\ell) & = \inf\left\{j>\phi_k(\dot\ell)~:~x_j\in B\right\}, \quad k\geq 1,
\end{split}
\end{equation}
with $\inf\{\emptyset\} = \infty$, and let 
\[
k(\dot\ell) = \sup\{n\geq 1~:~\phi_n(\dot\ell)<\infty\} < \infty.
\]
Note that the value of $k(\dot\ell)$ is the same for all $\dot\ell\in L(A,B)(\ell)$, in fact $k(\dot\ell) = |L(A,B)(\ell)|$, and we denote it by $k(\ell)$. 

\begin{lemma}\label{l:claim1}\cite[Claim~1]{CS16}
For any loop $\ell \in\mathfrak L_{A,B}$,  
\begin{eqnarray*}
\mu(\ell) &= &\frac{|\ell|}{k(\ell)}\, \sum_{\dot\ell\in L(A,B)(\ell)}\, \dot\mu(\dot\ell)\\
&\stackrel{\eqref{def:dotmu}}= &\frac{1}{k(\ell)}\, \sum_{x\in A}\,\prw_x\left[(X_0,\ldots, X_{|\ell|-1})\in L(A,B)(\ell),\, X_{|\ell|} = x\right],
\end{eqnarray*}
where $|\ell|$ is the length of the loop $\ell$. 
\end{lemma}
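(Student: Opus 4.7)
The statement is essentially a combinatorial identity dressed in measure-theoretic notation, and my plan is to reduce it to a double-counting argument and then unfold the definition of $\dot\mu$.

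Since $\dot\mu(\dot\ell)=\tfrac{1}{n}(2d)^{-n}$ depends only on $n=|\ell|$, the measure $\dot\mu$ is constant on every fiber $\pi^{-1}(\ell)$. Because $\mu$ is the push-forward of $\dot\mu$ under $\pi$, one has
\begin{equation*}
\mu(\ell) \;=\; |\pi^{-1}(\ell)|\cdot\tfrac{1}{n}(2d)^{-n}
\quad\text{and}\quad
\tfrac{|\ell|}{k(\ell)}\sum_{\dot\ell \in L(A,B)(\ell)}\dot\mu(\dot\ell) \;=\; \tfrac{|L(A,B)(\ell)|}{k(\ell)}(2d)^{-n}.
\end{equation*}
The first equality of the lemma thus reduces to the combinatorial identity
\begin{equation*}
|\pi^{-1}(\ell)| \cdot k(\ell) \;=\; |\ell| \cdot |L(A,B)(\ell)|.
\end{equation*}

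I would prove this identity by double-counting the set
\begin{equation*}
T \;=\; \{(\dot\ell,i) \,:\, \dot\ell \in \pi^{-1}(\ell),\ 1 \le i \le n,\ \sigma_i(\dot\ell) \in L(A,B)(\ell)\},
\end{equation*}
where $\sigma_i$ denotes the cyclic shift that moves the $i$-th coordinate to the front. Fixing $\dot\ell$, the admissible indices $i$ are precisely the positions at which the cyclically viewed sequence enters $A$ right after a visit to $B$; this count is cyclic-shift invariant and, read off on any representative, equals $k(\ell)$. Hence $|T|=|\pi^{-1}(\ell)|\cdot k(\ell)$. Fixing instead $\dot\ell' \in L(A,B)(\ell)$, for each $i\in\{1,\dots,n\}$ the unique $\dot\ell \in \pi^{-1}(\ell)$ with $\sigma_i(\dot\ell)=\dot\ell'$ is $\sigma_i^{-1}(\dot\ell')$, so $|T|=|\ell|\cdot|L(A,B)(\ell)|$. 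Equating the two counts gives the identity.

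For the second equality, each $\dot\ell \in L(A,B)(\ell)$ starts at some $x\in A$. Grouping the sum $\sum_{\dot\ell \in L(A,B)(\ell)}\dot\mu(\dot\ell)$ by this starting vertex and using \eqref{def:dotmu} together with the disjointness of the events $\{(X_0,\dots,X_{n-1})=\dot\ell\}$ for distinct $\dot\ell$'s, the inner sum collapses into $\prw_x[(X_0,\dots,X_{n-1}) \in L(A,B)(\ell),\,X_n = x]$, and the prefactor $|\ell|/k(\ell) = n/k(\ell)$ cancels the $1/n$ coming from $\dot\mu$.

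The argument is essentially bookkeeping and I do not anticipate a serious obstacle. The only point demanding a little attention is to verify that $k(\dot\ell)$ is genuinely a function of the equivalence class $\ell$ (immediate, since the ordered pattern of visits to $A$ and $B$ is an invariant of the cyclic sequence) and that the double count remains consistent when $\dot\ell$ has a non-trivial cyclic period $p$ properly dividing $n$; this is handled transparently by the argument above, since the fiber size $|\pi^{-1}(\ell)|=p$ and the count $|L(A,B)(\ell)|$ adjust accordingly while preserving the identity.
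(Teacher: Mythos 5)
Your proof is correct and is essentially the standard re-rooting/counting argument behind the cited \cite[Claim~1]{CS16}, which is all the paper itself offers for this lemma: reduce to the identity $|\pi^{-1}(\ell)|\cdot k(\ell)=|\ell|\cdot|L(A,B)(\ell)|$ by double counting shift--index pairs, then unfold $\dot\mu$ and group by the starting vertex in $A$. A nice extra: your double count is carried out carefully enough to cover loops with nontrivial cyclic period, where the fiber has size $p<|\ell|$ and the paper's parenthetical identification $k(\dot\ell)=|L(A,B)(\ell)|$ holds only after exactly the adjustment you describe (e.g.\ for $(x,y,x,y)$ one has $k=2$ but $|L(A,B)(\ell)|=1$), while the lemma itself remains valid.
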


\bigskip

Let $\mathscr L^\alpha_{A,B}$ be the restriction of $\mathscr L^\alpha$ to $\mathfrak L_{A,B}$. 
It is a Poisson point process with intensity measure $\alpha\,\mathds{1}_{\mathfrak L_{A,B}}\,\mu$, 
which is independent from the restriction of $\mathscr L^\alpha$ to $\mathfrak L\setminus \mathfrak L_{A,B}$. 
We are interested in the distribution of excursions from $A$ to $B$ of the loops in $\mathscr L^\alpha_{A,B}$ 
(parts of the loop between times $\phi_i$ and $\psi_i$). The set of excursions is only determined up to cyclic permutations, 
therefore, it is more convenient to work with excursions of based loops. 
The following lemma identifies $\mathscr L^\alpha_{A,B}$ with a projection of a suitable Poisson point process of based loops. 
Let 
\[
\dot{\mathfrak L}_{A,B} = L(A,B)(\mathfrak L_{A,B}),
\]
i.e., the set of all based loops $\dot\ell = (x_1,\ldots, x_n)$ such that 
\begin{itemize}\itemsep0pt
\item
$x_1\in A$, 
\item
there exists $i$ such that $x_i\in B$ and $x_j\notin (A\cup B)$ for all $j>i$. 
\end{itemize}
(Mind that $\dot{\mathfrak L}_{A,B}$ is \emph{not} the set of all based loops that intersect $A$ and $B$, as may be suggested by notation.)

\begin{lemma}\label{l:LalphaAB-basedloops}
Let $\dot{\mathscr L}^\alpha_{A,B}$ be a Poisson point process on $\dot{\mathfrak L}_{A,B}$ with intensity measure $\alpha\dot\mu_{A,B}$, where 
\[
\dot\mu_{A,B}(\dot\ell) = 
\frac{1}{k(\dot{\ell})}\,\prw_{x_1}\left[(X_0,\ldots, X_{|\dot\ell|-1})=\dot\ell,\, X_{|\dot\ell|} = x_1\right],\qquad 
\dot\ell = (x_1,\ldots, x_{|\dot\ell|})\in\dot{\mathfrak L}_{A,B}\,.
\]
Then $\pi\left(\dot{\mathscr L}^\alpha_{A,B}\right)$ is a Poisson point process on $\mathfrak L_{A,B}$ with intensity measure $\alpha\,\mathds{1}_{\mathfrak L_{A,B}}\,\mu$.

In other words, to sample $\mathscr L^\alpha_{A,B}$ one first samples the Poisson point process $\dot{\mathscr L}^\alpha_{A,B}$ of based loops 
and then replaces each based loop by its equivalence class. 
\end{lemma}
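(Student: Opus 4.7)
The plan is to apply the mapping theorem for Poisson point processes: if $\dot{\mathscr L}^\alpha_{A,B}$ is a Poisson point process on $\dot{\mathfrak L}_{A,B}$ with intensity $\alpha\dot\mu_{A,B}$, then $\pi\bigl(\dot{\mathscr L}^\alpha_{A,B}\bigr)$ is automatically a Poisson point process on $\mathfrak L$ with intensity $\alpha\,\pi_*(\dot\mu_{A,B})$. The whole lemma thus reduces to the identity
\[
\pi_*(\dot\mu_{A,B}) \;=\; \mathds{1}_{\mathfrak L_{A,B}}\,\mu,
\]
which I would verify pointwise on $\mathfrak L$.

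First I would observe that by construction $\dot{\mathfrak L}_{A,B} \subseteq \pi^{-1}(\mathfrak L_{A,B})$ (every based loop in $\dot{\mathfrak L}_{A,B}$ starts in $A$ and visits $B$, so its equivalence class lies in $\mathfrak L_{A,B}$), so $\pi_*(\dot\mu_{A,B})$ is supported on $\mathfrak L_{A,B}$. Next, for any $\ell\in\mathfrak L_{A,B}$, the preimage $\pi^{-1}(\ell)\cap\dot{\mathfrak L}_{A,B}$ equals exactly $L(A,B)(\ell)$, so
\[
\pi_*(\dot\mu_{A,B})(\ell) \;=\; \sum_{\dot\ell\in L(A,B)(\ell)}\dot\mu_{A,B}(\dot\ell).
\]
Since the number $k(\dot\ell)$ depends only on the equivalence class, I can pull $\frac{1}{k(\ell)}$ out of the sum. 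Comparing the remaining sum with the second expression for $\mu(\ell)$ in Lemma~\ref{l:claim1} yields $\pi_*(\dot\mu_{A,B})(\ell)=\mu(\ell)$, as needed.

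As a consistency check (and an alternative route via the first expression in Lemma~\ref{l:claim1}), I would also note that
\[
\dot\mu_{A,B}(\dot\ell)\;=\;\frac{1}{k(\dot\ell)}\Bigl(\tfrac{1}{2d}\Bigr)^{|\dot\ell|}
\;=\;\frac{|\dot\ell|}{k(\dot\ell)}\,\dot\mu(\dot\ell),
\]
so that $\sum_{\dot\ell\in L(A,B)(\ell)}\dot\mu_{A,B}(\dot\ell)=\frac{|\ell|}{k(\ell)}\sum_{\dot\ell\in L(A,B)(\ell)}\dot\mu(\dot\ell)=\mu(\ell)$ by the first identity of Lemma~\ref{l:claim1}.

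I do not foresee a genuine obstacle: the statement is essentially a bookkeeping exercise, and the disjointness $L(A,B)(\ell)\cap L(A,B)(\ell')=\emptyset$ for $\ell\neq\ell'$ (already recorded after the definition of $L(A,B)$) guarantees that no double counting occurs in the pushforward. The only delicate point is making sure that the combinatorial factor $k(\dot\ell)$ in the definition of $\dot\mu_{A,B}$ precisely matches the factor $k(\ell)$ appearing in Lemma~\ref{l:claim1}, but this is immediate from the remark that $k(\dot\ell)$ is constant on each equivalence class and equals $|L(A,B)(\ell)|$.
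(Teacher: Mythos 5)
Your proposal is correct and follows essentially the same route as the paper: the paper's proof also invokes the mapping property of Poisson point processes and computes the pushed-forward intensity as $\sum_{\dot\ell\in L(A,B)(\ell)}\dot\mu_{A,B}(\dot\ell)=\mu(\ell)$ via Lemma~\ref{l:claim1}. Your extra consistency check and the remark that $k(\dot\ell)=k(\ell)=|L(A,B)(\ell)|$ on each fibre are exactly the bookkeeping the paper leaves implicit.
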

\begin{proof}
This is a direct consequence of Lemma~\ref{l:claim1}. Indeed, $\pi\left(\dot{\mathscr L}^\alpha_{A,B}\right)$ is a Poisson point process with intensity measure 
\[
\ell\,\mapsto\, \alpha\,\sum_{\dot\ell\in L(A,B)(\ell)}\,\dot\mu_{A,B}(\dot\ell) = \alpha\mu(\ell)\,,\qquad \ell\in \mathfrak L_{A,B}\,.
\]
\end{proof}
The advantage of based loops in $\dot{\mathscr L}^\alpha_{A,B}$ is that their excursions from $A$ to $B$ are naturally ordered. 
Of course, the range of all based loops in $\dot{\mathscr L}^\alpha_{A,B}$ has the same law as the range of all loops in $\mathscr L^\alpha_{A,B}$.

\medskip

Next, we decompose the Poisson point process $\dot{\mathscr L}^\alpha_{A,B}$ according to the number of excursions that a based loop makes from $A$ to $B$. 
Namely, for $j\geq 1$, we denote by $\dot{\mathscr L}^{\alpha,j}_{A,B}$ the restriction of $\dot{\mathscr L}^\alpha_{A,B}$ 
to $\dot{\mathfrak L}^j_{A,B}=\{\dot\ell\in\dot{\mathfrak L}_{A,B}~:~ k(\dot\ell) = j\}$. Then, 
\begin{itemize}\itemsep0pt
\item
$\dot{\mathscr L}^{\alpha,j}_{A,B}$, $j\geq 1$, are independent Poisson point processes, 
\item
the intensity measure of $\dot{\mathscr L}^{\alpha,j}_{A,B}$ is $\alpha\mathds{1}_{\dot{\mathfrak L}^j_{A,B}}\dot\mu_{A,B}$,
\item
$\dot{\mathscr L}^{\alpha}_{A,B} = \sum_{j=1}^\infty\,\dot{\mathscr L}^{\alpha,j}_{A,B}$.
\end{itemize}

\medskip

We show in Proposition~\ref{prop:sampling-loopsoup} that each loop soup $\dot{\mathscr L}^{\alpha,j}_{A,B}$ can be constructed 
by sampling the starting and ending locations of all the excursions from $A$ to $B$ of all the loops in $\dot{\mathscr L}^{\alpha,j}_{A,B}$ 
according to a Poisson point process 
and then joining the endpoints by independent random walk bridges.

Let $j\geq 1$ and recall $\phi_i$ and $\psi_i$ defined in \eqref{eq:excursions-times}. 
For a loop $\dot\ell = (x_1,\ldots, x_n)\in \dot{\mathfrak L}^j_{A,B}$, denote the starting and ending locations of 
all the excursions of $\dot\ell$ from $A$ to $B$ by 
\[
\Phi_i(\dot\ell) = x_{\phi_i(\dot\ell)}\,\in A,\quad \Psi_i(\dot\ell) = x_{\psi_i(\dot\ell)}\,\in B,\qquad 1\leq i\leq j,
\]
the excursions from $A$ to $B$ by 
\[
\overrightarrow W_i(\dot\ell) = \left(x_{\phi_i(\dot\ell)},\ldots, x_{\psi_i(\dot\ell)}\right),\qquad 1\leq i\leq j,
\]
and the excursions from $B$ to $A$ by 
\[
\begin{split}
\overleftarrow W_i(\dot\ell) &= \left(x_{\psi_i(\dot\ell)},\ldots, x_{\phi_{i+1}(\dot\ell)}\right),\qquad 1\leq i\leq j-1,\\ 
\overleftarrow W_j(\dot\ell) &= \left(x_{\psi_j(\dot\ell)},\ldots, x_n, x_1\right),
\end{split}
\]
see Figure~\ref{fig:ABexcursions} for an illustration; 
and consider the Poisson point processes (multisets)
\begin{figure}[!tp]
\centering
\resizebox{12cm}{!}{\includegraphics{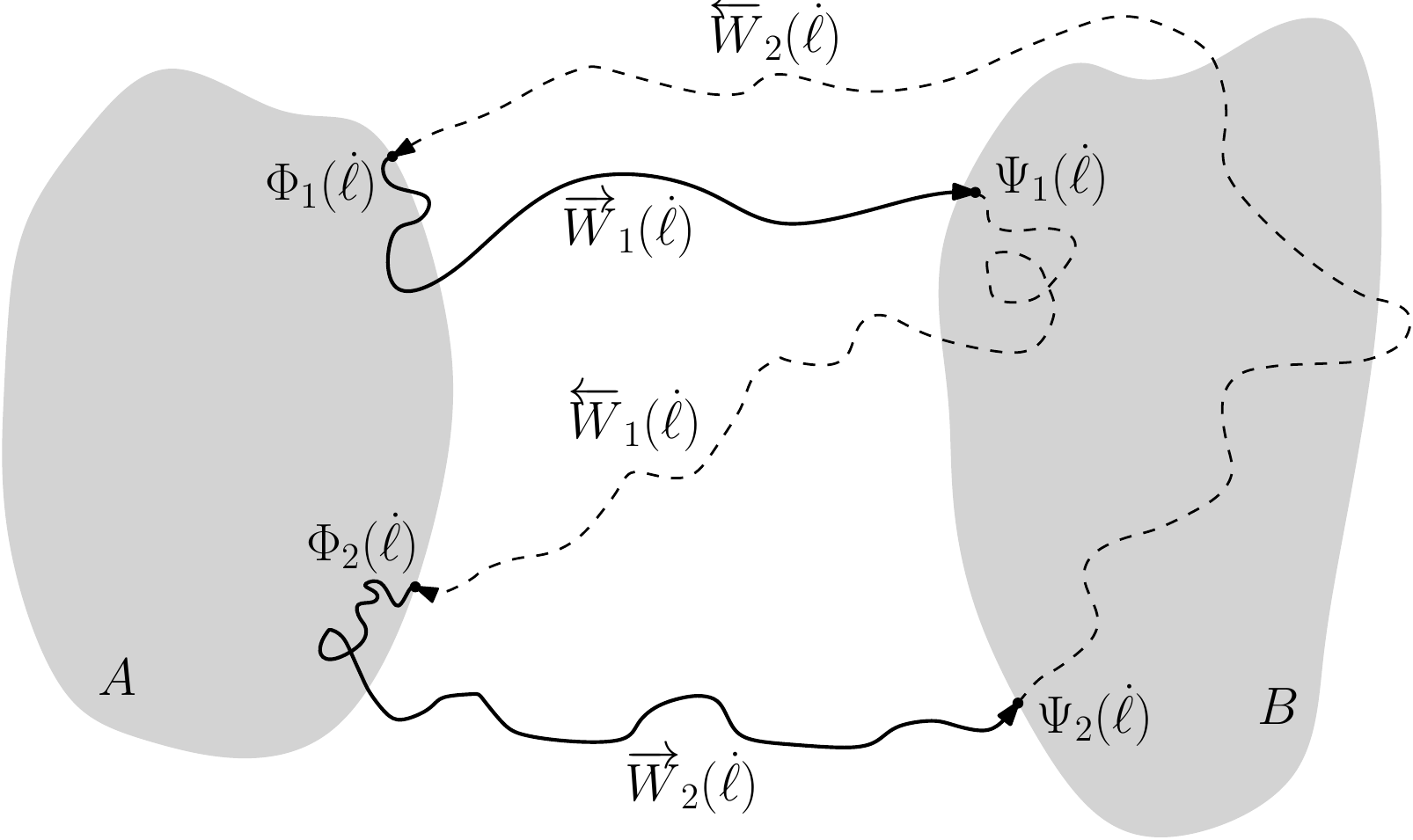}}
\caption{Decomposition of a loop from $\dot{\mathfrak L}^2_{A,B}$ into successive excursions.}
\label{fig:ABexcursions}
\end{figure}
\begin{equation}\label{def:mathcalE}
\begin{array}{rcl}
\mathcal E^{\alpha,j}_{A,B} &= &\left\{\left((\Phi_1(\dot\ell),\Psi_1(\dot\ell)),\ldots, (\Phi_j(\dot\ell),\Psi_j(\dot\ell))\right),\,\dot\ell\in \dot{\mathscr L}^{\alpha,j}_{A,B}\right\},\\[7pt]
\overrightarrow{\mathcal E}^{\alpha,j}_{A,B} &= &\left\{\left(\overrightarrow W_1(\dot\ell),\ldots, \overrightarrow W_j(\dot\ell)\right),\,\dot\ell\in \dot{\mathscr L}^{\alpha,j}_{A,B}\right\},\\[7pt]
\overleftarrow{\mathcal E}^{\alpha,j}_{A,B} &= &\left\{\left(\overleftarrow W_1(\dot\ell),\ldots, \overleftarrow W_j(\dot\ell)\right),\,\dot\ell\in \dot{\mathscr L}^{\alpha,j}_{A,B}\right\}.
\end{array}
\end{equation}

\begin{proposition}\label{prop:sampling-loopsoup}
Let $A,B$ be disjoint subsets of $\Z^d$, $d\geq 3$. 
For an infinite path $w=(x_0,x_1,\ldots)$, consider the sequence of times
\begin{equation*}
\begin{split}
\tau_0(w) & =  0,\\
\tau_{2j+1}(w) & = \inf\{ k>\tau_{2j}(w):x_k\in B  \},\\ 
\tau_{2j+2}(w) & = \inf\{ k>\tau_{2j+1}(w):x_k\in A  \},\qquad j\geq 0\,,
\end{split}
\end{equation*}
where $\inf\emptyset=\infty$. 

Then, for any $\alpha>0$ and integer $j\geq 1$, 
\begin{enumerate}[leftmargin=*]
\item
the intensity of $\mathcal E^{\alpha,j}_{A,B}$ is 
\[
\left((a_1,b_1),\ldots, (a_j,b_j)\right)\in (A\times B)^j\,\longmapsto\,
\frac{\alpha}{j}\,\prw_{a_1}\left[
\begin{array}{l}
\tau_{2j}<\infty,\,X_{\tau_{2j}} = a_1,\\
X_{\tau_{2(i-1)}} = a_i,\, X_{\tau_{2i-1}} = b_i,\,1\leq i\leq j
\end{array}\right],
\]
\item
conditioned on the multiset $\mathcal E^{\alpha,j}_{A,B} = \left\{(a_{i1},b_{i1}),\ldots, (a_{ij},b_{ij}), 1\leq i\leq n\right\}$, 
the Poisson point processes $\overrightarrow{\mathcal E}^{\alpha,j}_{A,B}$ and $\overleftarrow{\mathcal E}^{\alpha,j}_{A,B}$ 
are independent and sampled as products of bridge measures $\prw_{a_{ik},b_{ik}}^B$, resp., $\prw_{b_{ik},a_{i(k+1)}}^A$,
\end{enumerate}
Thus, the loops from $\dot{\mathscr L}^{\alpha}_{A,B}$ can be sampled in steps: first sample the number and the starting and ending locations of all excursions of all loops in $\dot{\mathscr L}^\alpha_{A,B}$ by 
sampling independently $\mathcal E^{\alpha,j}_{A,B}$, $j\geq 1$, and then complete all the excursions by sampling 
independent random walk bridges from $\prw_{\cdot,\cdot}^B$, resp., $\prw_{\cdot,\cdot}^A$.
\end{proposition}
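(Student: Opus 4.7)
The plan is to derive both parts from the mapping and marking theorems for Poisson point processes, combined with the strong Markov property of simple random walk applied at the stopping times $\tau_m$. The key observation is that the intensity $\alpha \mathds{1}_{\dot{\mathfrak L}^j_{A,B}} \dot\mu_{A,B}$ disintegrates along the endpoint map $\dot\ell\mapsto ((\Phi_i(\dot\ell),\Psi_i(\dot\ell)))_{i=1}^j$ into a product of bridge measures, after which $\mathcal E^{\alpha,j}_{A,B}$ is the Poisson process of ``marks removed'' and $\overrightarrow{\mathcal E}^{\alpha,j}_{A,B}$, $\overleftarrow{\mathcal E}^{\alpha,j}_{A,B}$ are the independent marks.

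For part~(1), the assignment $\mathrm{end}(\dot\ell) = \bigl((\Phi_1(\dot\ell),\Psi_1(\dot\ell)),\ldots,(\Phi_j(\dot\ell),\Psi_j(\dot\ell))\bigr)$ is a measurable map from $\dot{\mathfrak L}^j_{A,B}$ to $(A\times B)^j$, so by the mapping theorem $\mathcal E^{\alpha,j}_{A,B} = \mathrm{end}(\dot{\mathscr L}^{\alpha,j}_{A,B})$ is a Poisson point process with intensity $\alpha\cdot\mathrm{end}_*\bigl(\mathds{1}_{\dot{\mathfrak L}^j_{A,B}}\dot\mu_{A,B}\bigr)$. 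To evaluate this pushforward at $((a_1,b_1),\ldots,(a_j,b_j))$ I would sum $\dot\mu_{A,B}(\dot\ell)$ over all based loops $\dot\ell$ with $\mathrm{end}(\dot\ell) = ((a_1,b_1),\ldots,(a_j,b_j))$. Using the definition of $\dot\mu_{A,B}$, identifying the stopping times $\tau_m$ of the proposition with the alternating entrance times $\phi_i,\psi_i$ from \eqref{eq:excursions-times} (up to a harmless index shift) under $\prw_{a_1}$, and using that $k(\dot\ell)=j$ throughout $\dot{\mathfrak L}^j_{A,B}$, this sum reassembles into exactly the probability stated in part~(1).

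For part~(2), the marking theorem reduces the statement to the following single-loop claim: conditional on $\{X_{\tau_{2(i-1)}}=a_i,\,X_{\tau_{2i-1}}=b_i,\,X_{\tau_{2j}}=a_1\}$ under $\prw_{a_1}$, the successive segments $(X_{\tau_{2(i-1)}},\ldots,X_{\tau_{2i-1}})$ and $(X_{\tau_{2i-1}},\ldots,X_{\tau_{2i}})$ are independent and distributed respectively as $\prw^B_{a_i,b_i}$ and $\prw^A_{b_i,a_{i+1}}$, with the cyclic convention $a_{j+1}=a_1$. This is the strong Markov property applied successively at $\tau_1,\ldots,\tau_{2j-1}$: each forward segment starts in $A$ and enters $B$ for the first time at $b_i$, so is by definition the bridge $\prw^B_{a_i,b_i}$, while each backward segment starts in $B$ and enters $A$ for the first time at $a_{i+1}$, so is $\prw^A_{b_i,a_{i+1}}$. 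The main obstacle is purely notational: aligning the $\tau_m$'s with $\phi_i,\psi_i$, handling the cyclic closure so the last backward excursion terminates at $a_1$ rather than at some fresh vertex $a_{j+1}$, and checking that the interior constraints inherent in $L(A,B)$ (forward excursions avoid $B$ but may revisit $A$, backward excursions avoid $A$ but may revisit $B$) match the supports of $\prw^B_{\cdot,\cdot}$ and $\prw^A_{\cdot,\cdot}$.
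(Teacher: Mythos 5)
Your proposal is correct and follows essentially the same route as the paper: the paper's proof is precisely the factorization of the intensity $\dot\mu_{A,B}$, via the Markov property at the successive entrance times, into the endpoint probability of part~(1) times the product of bridge measures $\prw^B_{\Phi_i,\Psi_i}$ and $\prw^A_{\Psi_i,\Phi_{i+1}}$ (with $\Phi_{j+1}=\Phi_1$), after which the conclusion is the standard mapping/marking structure of Poisson point processes that you spell out. The notational points you flag (aligning $\tau_m$ with $\phi_i,\psi_i$, the cyclic closure at $a_1$, and the support conventions of the bridges) are exactly the details the paper's displayed computation handles.
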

\begin{proof}
Let $j\geq 1$ and $\dot\ell = (x_1,\ldots, x_{|\dot\ell|})\in \dot{\mathfrak L}^j_{A,B}$. 
The result is immediate from the following representation of $\dot\mu_{A,B}$:
\begin{eqnarray*}
\dot\mu_{A,B}(\dot\ell)
&= &\frac{1}{j}\,\prw_{x_1}\left[(X_0,\ldots, X_{|\dot\ell|-1})=\dot\ell,\, X_{|\dot\ell|} = x_1\right]\\[4pt]
&= &\frac1j\,\prw_{x_1}\left[
\begin{array}{ll}
\tau_{2j}<\infty,\,X_{\tau_{2j}} = x_1, &
X_{\tau_{2(i-1)}} = \Phi_i(\dot\ell),\, X_{\tau_{2i-1}} = \Psi_i(\dot\ell),\\
& (X_{\tau_{2(i-1)}},\ldots, X_{\tau_{2i-1}})=\overrightarrow W_i(\dot\ell),\\
& (X_{\tau_{2i-1}},\ldots, X_{\tau_{2i}})=\overleftarrow W_i(\dot\ell),\,1\leq i\leq j
\end{array}\right]\\[4pt]
&= &\frac1j\,\prw_{x_1}\left[
\begin{array}{l}
\tau_{2j}<\infty,\,X_{\tau_{2j}} = x_1,\\
X_{\tau_{2(i-1)}} = \Phi_i(\dot\ell),\, X_{\tau_{2i-1}} = \Psi_i(\dot\ell),\,1\leq i\leq j
\end{array}\right]\\
& &\qquad\qquad\prod_{i=1}^j\,\prw_{\Phi_i(\dot\ell),\, \Psi_i(\dot\ell)}^B\left[\overrightarrow W_i(\dot\ell)\right]\,
\prod_{i=1}^j\,\prw_{\Psi_i(\dot\ell),\, \Phi_{i+1}(\dot\ell)}^A\left[\overleftarrow W_i(\dot\ell)\right]\,,
\end{eqnarray*}
where in the last step we used the Markov property of random walk and set $\Phi_{j+1} = \Phi_1$. 
\end{proof}

\begin{remark}\label{rem:uniqueness}
Proposition~\ref{prop:sampling-loopsoup} (applied to $A=\dint\ball(0,n)$, $B=\dext\ball(0,n)$) can be used 
to adapt to $\mathcal V^\alpha$ the standard Burton-Keane argument \cite{BK89} for the uniqueness of the infinite percolation cluster, 
even though one of the main requirements, the positive finite energy property, is not satisfied by $\mathcal V^\alpha$. 
(The positive finite energy property states that $\mathbb P\left[0\in\mathcal V^\alpha~|~\sigma\left(\mathds{1}_{x\in\mathcal V^\alpha},\,x\neq 0\right)\right]>0$ almost surely, 
which is obviously not the case here, since, for instance, if all the vertices of $\ball(0,2)\setminus\{0\}$ are vacant except for one neighbor of the origin, then 
the origin cannot be vacant, as every loop visits at least $2$ vertices.) See, e.g., \cite[Theorem~1.1]{Teixeira:uniqueness}, where the Burton-Keane argument is adapted to prove the uniqueness of the infinite percolation cluster 
in the vacant set of random interlacements, which also does not satisfy the positive finite energy property. 
\end{remark}

\medskip

We end this section with a large deviation bound on the total number of excursions from $A$ to $B$ in all loops from $\mathscr L^\alpha_{A,B}$. 
\begin{lemma}\label{l:excursions-AB-num}
Let $A,B$ be (disjoint) subsets of $\Z^d$ such that 
\begin{equation}\label{eq:excursions-AB-num-assumption}
\sup_{y\in B}\prw_y[H_A<\infty] \leq \frac{1}{2e}.
\end{equation}
Let ${\mathcal Z}^\alpha_{A,B}$ be the total number of excursions from $A$ to $B$ of all the loops from $\mathscr L^\alpha$. 
Then, 
\[
\P[{\mathcal Z}^\alpha_{A,B}\geq k]\leq \exp\left(\alpha - k\right).
\]
\end{lemma}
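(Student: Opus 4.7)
The plan is to apply the exponential Chernoff--Markov bound for Poisson point functionals. Using Lemma~\ref{l:LalphaAB-basedloops}, I would identify the total excursion count with the functional $\mathcal Z^\alpha_{A,B} = \sum_{\dot\ell \in \dot{\mathscr L}^\alpha_{A,B}} k(\dot\ell)$ of the Poisson point process $\dot{\mathscr L}^\alpha_{A,B}$ of intensity $\alpha \dot\mu_{A,B}$. Campbell's exponential formula then yields
\[
\mathbb E\!\left[e^{\mathcal Z^\alpha_{A,B}}\right]
= \exp\!\left(\alpha \sum_{j \geq 1}(e^j - 1)\,\dot\mu_{A,B}(\dot{\mathfrak L}^j_{A,B})\right),
\]
and Markov's inequality $\P[\mathcal Z^\alpha_{A,B}\geq k]\leq e^{-k}\mathbb E[e^{\mathcal Z^\alpha_{A,B}}]$ reduces the claim to proving the integral estimate $\sum_{j\geq 1}(e^j-1)\,\dot\mu_{A,B}(\dot{\mathfrak L}^j_{A,B})\leq 1$.

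For this estimate, I would use Proposition~\ref{prop:sampling-loopsoup}(1) (or the definition of $\dot\mu_{A,B}$ directly) to write $\dot\mu_{A,B}(\dot{\mathfrak L}^j_{A,B}) = \frac{1}{j}\sum_{a\in A}\prw_a[\tau_{2j}<\infty, X_{\tau_{2j}}=a]$, where $\tau_{2k}$ denote the successive entry times of the random walk into $A$ via $B$. Iterating the strong Markov property at the $B$-hitting times $\tau_1,\tau_3,\ldots,\tau_{2j-1}$ and using $\sup_{y\in B}\prw_y[H_A<\infty]\leq 1/(2e)$ at each of the $j$ returns from $B$ to $A$, one gets $\prw_a[\tau_{2j}<\infty]\leq (1/(2e))^j$. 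Combining this with the closed-loop constraint $X_{\tau_{2j}} = a$ and reversibility of simple random walk to handle the sum over starting points, one aims for the uniform bound $\dot\mu_{A,B}(\dot{\mathfrak L}^j_{A,B}) \leq (1/(2e))^j/j$. Plugging this in gives
\[
\sum_{j\geq 1}(e^j-1)\cdot\frac{(1/(2e))^j}{j} \;\leq\; \sum_{j\geq 1}\frac{(1/2)^j}{j} \;=\; \log 2 \;\leq\; 1,
\]
which closes the argument.

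The main technical obstacle I anticipate is the step of passing from the pointwise estimate $\prw_a[\tau_{2j}<\infty]\leq (1/(2e))^j$ to a bound on $\sum_{a \in A}\prw_a[\tau_{2j}<\infty, X_{\tau_{2j}}=a]$ that is independent of $|A|$: a naive sum of the pointwise bounds produces an unwanted factor $|A|\cdot(1/(2e))^j$. Overcoming this requires genuinely using that the walk must return \emph{exactly} to its starting point, i.e., rewriting $\sum_{a\in A}\prw_a[\tau_{2j}<\infty, X_{\tau_{2j}}=a]$ as the trace $\mathrm{tr}(K^j)$ of the round-trip kernel $K(a,a') = \prw_a[\tau_2<\infty, X_{\tau_2}=a']$ (whose operator norm is bounded by $1/(2e)$) and exploiting reversibility together with the $1/j$ cyclic redundancy already present in $\dot\mu_{A,B}$ to absorb the sum over $a$. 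Once this trace-level bound is in hand, the remaining calculation is the elementary series identity displayed above.
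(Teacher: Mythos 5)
Your overall architecture is exactly the paper's: decompose $\dot{\mathscr L}^\alpha_{A,B}$ according to the number $j$ of excursions, note that the counts of loops with exactly $j$ excursions are independent Poisson variables with intensities $\lambda_j=\alpha\,\dot\mu_{A,B}(\dot{\mathfrak L}^j_{A,B})$, compute $\mathbb E\bigl[e^{\mathcal Z^\alpha_{A,B}}\bigr]$ (your Campbell formula is the paper's product of Poisson exponential moments), and finish with the exponential Chebyshev inequality and the same numerical margin $\sum_{j\geq 1}2^{-j}/j=\log 2\leq 1$. In both your write-up and the paper, the lemma thus reduces to the single estimate $\dot\mu_{A,B}(\dot{\mathfrak L}^j_{A,B})\leq \frac1j\,(2e)^{-j}$, i.e.\ $\sum_{a\in A}\prw_a\left[\tau_{2j}<\infty,\,X_{\tau_{2j}}=a\right]\leq (2e)^{-j}$.

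That estimate is precisely where your proposal stops being a proof. You correctly observe that summing the pointwise bound $\prw_a[\tau_{2j}<\infty]\leq(2e)^{-j}$ over $a\in A$ produces an unwanted factor $|A|$, but the remedy you sketch does not deliver the claim: writing the left-hand side as $\mathrm{tr}(K^j)$ for the round-trip kernel $K$ and invoking an operator-norm bound $\|K\|\leq 1/(2e)$ only yields $\mathrm{tr}(K^j)\leq |A|\,\|K\|^j$ --- the trace of a nonnegative kernel is not controlled by a power of its $\ell^\infty$ operator norm without a factor of the dimension, reversibility of the walk does not symmetrize $K$ in a way that removes this factor, and the ``$1/j$ cyclic redundancy'' has already been spent in identifying $\lambda_j$, so it cannot be used a second time to absorb the sum over $a$. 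Hence the one step that carries the whole content of the lemma is left unproved in your attempt. For comparison, the paper dispatches this point at the start of its chain of inequalities: it bounds $\sum_{x\in A}\prw_x\left[\tau_{2j}<\infty,\,X_{\tau_{2j}}=x\right]$ by $\sup_{z\in A}\prw_z\left[\tau_{2j}<\infty\right]$, thereby discarding the sum over starting points together with the diagonal constraint in one stroke, and then iterates the strong Markov property at $\tau_2,\ldots,\tau_{2(j-1)}$ and at $\tau_1$ to obtain $\leq\bigl(\sup_{y\in B}\prw_y[H_A<\infty]\bigr)^j\leq(2e)^{-j}$. Your instinct that this passage is the crux is sound, but to complete your argument you must actually establish an $|A|$-free bound of this type; the operator-norm heuristic you propose cannot do so on its own.
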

\begin{proof}
Let ${\mathcal Z}^{\alpha,j}_{A,B}$ be the number of loops in $\dot{\mathscr L}^{\alpha,j}_{A,B}$. 
By Proposition~\ref{prop:sampling-loopsoup}(1), ${\mathcal Z}^{\alpha,j}_{A,B}$ are independent Poisson random variables with intensities 
\begin{eqnarray}\label{eq:intensity-Zj}
\lambda_j &= &\frac{\alpha}{j} \sum_{x\in A}\prw_x\left[\tau_{2j}<\infty,\,X_{\tau_{2j}} = x\right]
\leq \frac{\alpha}{j} \sup_{z\in A}\prw_z\left[\tau_{2j}<\infty\right]
\stackrel{(*)}\leq \frac{\alpha}{j} \left(\sup_{z\in A}\prw_z\left[\tau_2<\infty\right]\right)^j \nonumber \\ 
&\stackrel{(**)}\leq &\frac{\alpha}{j} \left(\sup_{y\in B}\prw_y\left[H_A<\infty\right]\right)^j
\,\stackrel{\eqref{eq:excursions-AB-num-assumption}}
\leq\, \frac{\alpha}{j}\,\left(\frac{1}{2e}\right)^j,
\end{eqnarray}
where in ($*$) we used the strong Markov property at times $\tau_{2i}$, $1\leq i<j$, and in ($**$) at time $\tau_1$. 
Furthermore, by Lemma~\ref{l:LalphaAB-basedloops}, ${\mathcal Z}^{\alpha}_{A,B} \stackrel{d}= \sum_{j=1}^\infty j\, {\mathcal Z}^{\alpha,j}_{A,B}$. Thus,
\begin{eqnarray*}
\mathbb E\left[\exp\left({\mathcal Z}^{\alpha}_{A,B}\right)\right] 
&= &\mathbb E\left[\exp\left(\sum_{j=1}^\infty j\, {\mathcal Z}^{\alpha,j}_{A,B}\right)\right] = 
\prod_{j=1}^\infty\,\mathbb E\left[\exp\left(j\, {\mathcal Z}^{\alpha,j}_{A,B}\right)\right] \\
&= &\prod_{j=1}^\infty\, \exp\left(\lambda_j\left(e^j - 1\right)\right) 
\,\stackrel{\eqref{eq:intensity-Zj}}\leq\, e^{\alpha},
\end{eqnarray*}
and the result follows from the exponential Chebyshev inequality. 
\end{proof}

\section{Proof of Theorem~\ref{thm:decoupling}}\label{sec:proof-decoupling}

The proofs of \eqref{eq:decoupling} and \eqref{eq:decoupling:decreasing} are very similar and 
we only provide here the proof of \eqref{eq:decoupling}. 

We begin with an outline of the proof. 
We decompose the loops from $\mathscr L^\alpha$ that intersect $S_1 = \dint \ball(x_1,L)$ and $S_1' = \dint \ball(x_1,rL)$ (with a fixed large $r\in (2,s/2]$) 
into inner (from $S_1$ to $S_1'$) and outer (from $S_1'$ to $S_1$) excursions. 
By Proposition~\ref{prop:sampling-loopsoup}, given their starting and ending locations, the inner and outer excursions are independent random walk bridges. 
By the locality of $f_1$ and $f_2$ and disjointness of $\ball(x_1,rL)$ and $\ball(x_2,L)$, the inner excursions contribute only to the value of $f_1$ and the outer only to the value of $f_2$. 
By Lemma~\ref{l:excursions-AB-num} the total number of the outer excursions is bounded by $k$ with probability $\leq e^{\alpha-k}$.
For each outer excursion, its range in $\ball(x_2,L)$ is stochastically dominated by the range of a random walk loop soup with intensity $\frac{\delta}{k}$ 
on an event of probability $\geq 1 - \exp\left(\frac{\delta}{k}\,s^{2(d-2)}\right)$ (see Lemma~\ref{l:bridgerange}). 
Since $f_2$ is monotone and depends only on the configuration in $\ball(x_2,L)$, the stochastic domination implies the desired inequality for expectations.
Optimization over $k$ gives (for $k=\sqrt{\delta}s^{d-2}$) the desired error term.

\medskip

We proceed with the details of the proof. 
Without loss of generality, we may assume that $s\geq s_0=s_0(d)$. 
Let $L\geq 1$ and take $2 < r\leq s/2$ sufficiently large (the ultimate choice of $r$ depends only on the dimension). 
Let $x_1,x_2\in\Z^d$ with $\|x_1-x_2\|=sL$ and define 
\[
B_i = \ball(x_i,L),\quad B_i' = \ball(x_i,rL),\quad S_i = \dint B_i,\quad S_i' = \dint B_i'.
\]
Let $f_1,f_2:\{0,1\}^{\Z^d}\to[0,1]$ such that $f_i$ only depends on coordinates of $\omega\in\{0,1\}^{\Z^d}$ in $B_i$ and assume that $f_2$ is increasing. 

\smallskip

Let $\mathcal Z = \mathcal Z^\alpha_{S_1',S_1}$ be the total number of excursions from $S_1'$ to $S_1$ in $\mathscr L^\alpha$ and 
$\mathcal E = \{(\mathcal X_i,\mathcal Y_i):1\leq i\leq \mathcal Z\}$ the multiset of starting and ending positions of all the excursions 
(i.e., all the pairs from $\mathcal E^{\alpha,j}_{S_1',S_1}$, $j\geq 1$).
By Proposition~\ref{prop:sampling-loopsoup}, conditioned on $\mathcal E$, the excursions are distributed as independent random walk bridges 
started at $\mathcal X_i$ and conditioned to hit $S_1$ at $\mathcal Y_i$. 

Let $k\geq 1$ (to be specified later) and consider the event 
\[
G_1 = \{\mathcal Z\leq k\}.
\]
By the locality of $f_1$ and $f_2$, $f_1$ only depends on the loops from $\mathscr L^\alpha$ that are contained in $B_1'$ and 
on the excursions of the loops intersecting both $S_1$ and $S_1'$ that start on $S_1$ and end on $S_1'$, and 
$f_2$ is independent of all these loops and excursions given $\mathcal E$ by Proposition~\ref{prop:sampling-loopsoup} and the definition of Poisson point process. 
Thus, 
\begin{equation}\label{eq:G1-G1c}
\mathbb E^\alpha\left[f_1\, f_2\right] \leq 
\mathbb E^\alpha\left[f_1\mathds{1}_{G_1}\,\mathbb E^\alpha\left[f_2~|~\mathcal E\right]\right] + \P^\alpha[G_1^c].
\end{equation}
We will now bound $\mathbb E^\alpha\left[f_2~|~\mathcal E\right]$. 
For each $k$-tuple $\{(\widetilde x_i,\widetilde y_i)\in S_1'\times S_1,\, 1\leq i\leq k\}$, denote by 
$\mathbb E_{\alpha; \{(\widetilde x_i,\widetilde y_i)\}_{i=1}^k}$ the expectation with respect to the distribution of $\{\mathds{1}_{x\in \widetilde {\mathcal R}}\}_{x\in\Z^d}$ on $\{0,1\}^{\Z^d}$, 
where $\widetilde {\mathcal R}$ is the range of
\begin{itemize}\itemsep0pt
\item 
the loops from $\mathscr L^\alpha$ that do not intersect $S_1$ and 
\item
independent $k$-tuple of independent random walk bridges with the $i$th bridge starting at $\widetilde x_i$ and conditioned to hit $S_1$ at $\widetilde y_i$. 
\end{itemize}
By the monotonicity of $f_2$,  
\[
\mathbb E^\alpha\left[f_2~|~\mathcal E\right]\, \leq\, \max_{\left\{(\widetilde x_i,\widetilde y_i)\in S_1'\times S_1\right\}_{i=1}^k}\,
\mathbb E_{\alpha; \{(\widetilde x_i,\widetilde y_i)\}_{i=1}^k}\left[f_2\right] \quad\text{on $G_1$}.
\]

It now suffices to analyse separately the influence of each bridge on the configuration in $B_2$. 
We will prove the following lemma, which easily gives the main result.
\begin{lemma}\label{l:bridgerange}
For $x\in S_1'$ and $y\in S_1$, let $\mathcal R_{x,y}$ be the range in $B_2$ of a random walk bridge started at $x$ and conditioned to hit $S_1$ at $y$, see Figure~\ref{fig:range-Rxy}. 
For $\delta'\in(0,1)$, let $\mathcal R$ be the range in $B_2$ of the loops from the loop soup $\mathscr L^{\delta'}$. 

Then for each $r\geq r_0=r_0(d)$ there exists a coupling $(\overline{\mathcal R}_{x,y},\overline {\mathcal R})$ of $\mathcal R_{x,y}$ and $\mathcal R$ such that 
\[
\P\left[\overline{\mathcal R}_{x,y}\subseteq \overline {\mathcal R}\right]\geq 1 - Cs^{2(2-d)}\,\exp\left[-c\,\delta'\,s^{2(d-2)}\right],
\]
where $C=C(d,r)$ and $c=c(d,r)$. 
\end{lemma}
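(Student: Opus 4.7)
The range $\mathcal R_{x,y}$ is empty unless the bridge visits $B_2$, and under the bridge law $\prw_{x,y}^{S_1}$ this has probability only of order $s^{2(2-d)}$, because the detour to $B_2$ costs two factors of the hitting probability from distance $\sim sL$ to a ball of radius $L$. On this rare event I would extract a single random walk excursion inside $B_2$ and couple it, via Proposition~\ref{prop:sampling-loopsoup}, with an excursion drawn from $\mathscr L^{\delta'}$; Poisson concentration produces the exponential factor $\exp(-c\,\delta'\,s^{2(d-2)})$.

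\textbf{Bridge side.} First I would use Bayes' formula together with Lemma~\ref{l:hitting} to show
\[
\prw_{x,y}^{S_1}\!\bigl[H_{B_2}<\infty\bigr]
\;=\;\frac{\prw_x\!\bigl[H_{B_2}<H_{S_1},\,X_{H_{S_1}}=y\bigr]}{\prw_x\!\bigl[X_{H_{S_1}}=y\bigr]}
\;\leq\;C\,s^{2(2-d)},
\]
the numerator being of order $s^{2(2-d)}\widetilde e_{S_1}(y)$ (apply Lemma~\ref{l:hitting}(1) to each of the two legs $x\to B_2$ and $B_2\to S_1$ via the strong Markov property, and Lemma~\ref{l:hitting}(2) to identify the exit point $y$) while the denominator is of order $\widetilde e_{S_1}(y)$ (with $r$ a constant depending on $d$). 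Conditional on $\{H_{B_2}<\infty\}$, the strong Markov property at the first entry time together with Lemma~\ref{l:hitting}(2) will give that the entry point on $\partial_{\mathrm{int}}B_2$ is distributed comparably to the normalised equilibrium measure $\widetilde e_{B_2}$, and the subsequent path behaves as an unconditioned simple random walk until it leaves $B_2$. Thus $\mathcal R_{x,y}$ is stochastically dominated by the range in $B_2$ of a single random walk excursion with $\widetilde e_{B_2}$-distributed entry.

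\textbf{Coupling with the loop soup.} Next I would apply Proposition~\ref{prop:sampling-loopsoup} with $A=B_2$ and $B=\dext\ball(x_2,2L)$: the loops of $\mathscr L^{\delta'}$ visiting both sets yield a Poisson point process of excursions in $B_2$ whose endpoints on $\partial B_2$ are (by Lemma~\ref{l:hitting}(2)) comparable to $\widetilde e_{B_2}$ and whose interiors, given the endpoints, are independent random walk bridges. Using reversibility of random walk bridges and a bounded Radon--Nikodym comparison between the target excursion and a typical loop excursion, I would couple the target with a single Poisson excursion so that their ranges in $B_2$ coincide, provided at least one loop excursion of the ``right type'' is present. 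Combining the loop soup intensity $\delta'$ with \eqref{eq:GF} and \eqref{eq:capa:ball}, the Poisson intensity of eligible loop excursions is of order $\delta'\,s^{2(d-2)}$, and Poisson concentration then bounds the failure probability by $\exp(-c\,\delta'\,s^{2(d-2)})$. Multiplying this by the visit probability $Cs^{2(2-d)}$ of the bridge yields the lemma.

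\textbf{Main obstacle.} The hard part will be the coupling step: pinpointing the intensity $\delta's^{2(d-2)}$ of loop excursions available for the coupling, and constructing the coupling so that the bridge excursion is actually matched (not merely dominated in range) by a loop excursion. This requires a careful combination of Proposition~\ref{prop:sampling-loopsoup}(1), Lemma~\ref{l:hitting}, and uniform control of Radon--Nikodym derivatives between the various bridge laws in play; in particular one must verify that the large exponent $s^{2(d-2)}$ is genuinely gained by restricting to loops whose excursion through $B_2$ starts and ends in the equilibrium-like region selected by the bridge.
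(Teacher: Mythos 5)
Your bridge-side estimates are fine (and agree with the paper), but the coupling step contains a genuine gap: the exponential factor cannot come from ``Poisson concentration'' of loop-soup excursions, because the intensity of eligible loop excursions does not grow with $s$. The loops of $\mathscr L^{\delta'}$ whose excursions can cover the bridge's trace in $B_2$ are those visiting both $S_2=\dint\ball(x_2,L)$ and $S_2'=\dint\ball(x_2,rL)$, and their total intensity is of order $\delta'\,r^{2-d}$ --- a quantity depending only on $d,r,\delta'$, not on $s$ (the loop soup near $x_2$ knows nothing about the distance to $x_1$); it is certainly not $\delta' s^{2(d-2)}$. Consequently, conditioned on the bridge visiting $B_2$, the loop soup contains even one excursion ``of the right type'' only with probability $O(\delta')$, so your matching coupling fails with conditional probability $1-O(\delta')$, and the total failure probability you can reach this way is of order $s^{2(2-d)}$, without the factor $\exp(-c\,\delta' s^{2(d-2)})$ that the lemma asserts (and that the application really needs, since it is later multiplied by $k\approx\sqrt{\delta}\,s^{d-2}$ bridges). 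A second, related inaccuracy is the reduction to ``a single random walk excursion'': the bridge may re-enter $B_2$ several times, and the event that it makes many excursions from $S_2$ to $S_2'$ is exactly what cannot be dominated; conditioning on a visit and matching one excursion does not address this. Note also that dominating a sample that is present with conditional probability $1$ by a Poisson process of total intensity $O(\delta')$ is impossible with high probability, so the architecture of ``condition on the visit, then match'' cannot be repaired by sharper Radon--Nikodym estimates.

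The paper's proof avoids both problems by working with unconditioned intensity measures. First it Poissonizes the bridge: since $p=\prw_{x,y}^{S_1}[H_{B_2}<\infty]\le 1-e^{-2p}$ (as $p\le C(r/s^2)^{d-2}$ is small), the range $\mathcal R_{x,y}$ is stochastically dominated by the range of a Poisson point process of bridges with intensity $2\,\prw_{x,y}^{S_1}$. Both this process and $\mathscr L^{\delta'}_{S_2,S_2'}$ (rooted via Lemma~\ref{l:LalphaAB-basedloops}) are then decomposed into independent Poisson processes $\sigma_m$, $\sigma_m'$ of $m$-tuples of excursions from $S_2$ to $S_2'$, with intensities comparable to $(r/s^2)^{d-2}\,\widetilde e_{S_2}(w_1(0))\,\Gamma_m$ and $\delta'\,m^{-1}\,r^{2-d}\,\widetilde e_{S_2}(w_1(0))\,\Gamma_m'$ respectively; since $\Gamma_m\le\Gamma_m'$, one gets $\lambda_m\le\lambda_m'$ precisely for $m\le c\,\delta'(s/r)^{2(d-2)}$, hence stochastic domination of the whole excursion cloud on the event that no bridge makes more than that many excursions. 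The exponential factor is the probability of the complementary event: a visit probability of order $s^{2(2-d)}$ times a geometric factor $(Cr^{2-d})^{m-1}$ for $m\asymp\delta'(s/r)^{2(d-2)}$ excursions, i.e.\ it is a tail bound on the number of \emph{bridge} excursions, not a concentration bound for the loop soup. Your observation that the normalized excursion laws of bridge and loop are comparable (via Lemma~\ref{l:hitting}) is correct and is indeed used, but it must be combined with Poissonization of the bridge and a comparison of intensities rather than of conditional laws to produce the stated error term.
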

\begin{figure}[!tp]
\centering
\resizebox{12cm}{!}{\includegraphics{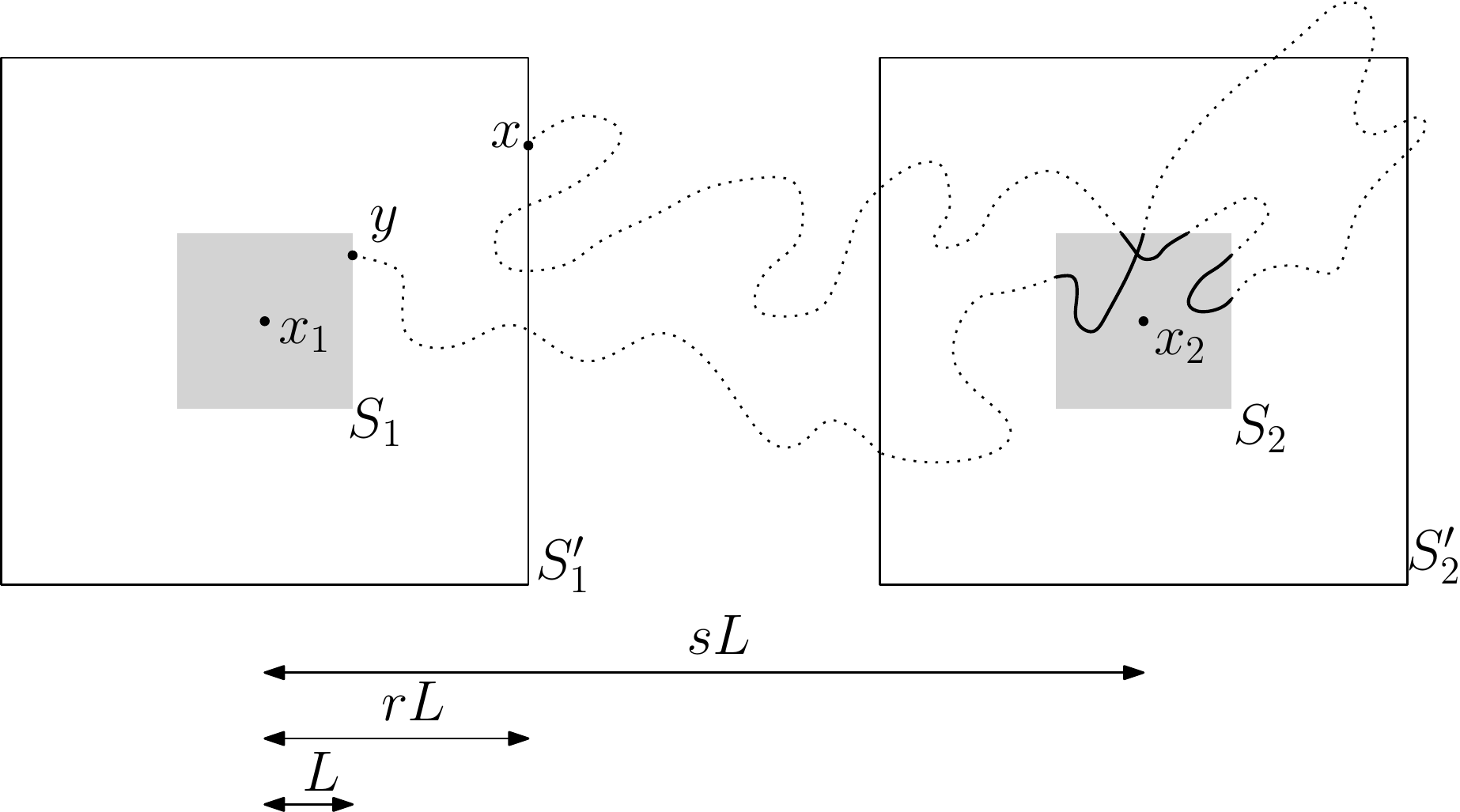}}
\caption{The range in $B_2$ of a random walk bridge started at $x$ and conditioned to hit $S_1$ at $y$ is denoted by $\mathcal R_{x,y}$.}
\label{fig:range-Rxy}
\end{figure}

We first complete the proof of the theorem using the lemma. 
By taking $\delta' = \frac{\delta}{k}$ in Lemma~\ref{l:bridgerange}, it is immediate that 
\[
\mathbb E_{\alpha; \{(\widetilde x_i,\widetilde y_i)\}_{i=1}^k}\left[f_2\right]
\leq 
\mathbb E^{\alpha+\delta}\left[f_2\right] + Cks^{2(2-d)}\,\exp\left[-c\textstyle{\frac{\delta}{k}}\,s^{2(d-2)}\right].
\]
We choose $k=\sqrt{\delta}s^{d-2}$, so that $Cks^{2(2-d)}\,\exp\left[-c\frac{\delta}{k}\,s^{2(d-2)}\right]\leq C\exp[\alpha-c\sqrt{\delta}s^{d-2}]$, 
and it remains to show that with this choice of $k$, also 
$\P^\alpha[G_1^c] \leq C\exp[\alpha-c\sqrt{\delta}s^{d-2}]$. 
This follows from Lemma~\ref{l:excursions-AB-num}. Indeed, by \eqref{eq:hitting1}, $\sup_{x\in S_1'}\prw_x\left[H_{S_1}<\infty\right]\leq Cr^{2-d} < \frac1{2e}$ for 
$r$ sufficiently large. Thus, by Lemma~\ref{l:excursions-AB-num}, $\mathbb P^\alpha[G_1^c] \leq \exp[\alpha - k] = \exp[\alpha-\sqrt{\delta}s^{d-2}]$.

This completes the proof of Theorem~\ref{thm:decoupling} subject to Lemma~\ref{l:bridgerange}.
\qed

\subsection{Proof of Lemma~\ref{l:bridgerange}}

Fix $x\in S_1'$ and $y\in S_1$. By \eqref{eq:hitting1}, \eqref{eq:hitting2} and \eqref{eq:hitting3}, the probability that a random walk bridge started at $x$ and conditioned to 
hit $S_1$ in $y$ visits $B_2$ is bounded by 
\[
c\,\left(\frac{r}{s^2}\right)^{d-2}\leq \prw_{x,y}^{S_1}[H_{B_2}<\infty] \leq C\,\left(\frac{r}{s^2}\right)^{d-2},
\]
which is small if $s\geq s_0(d)$ (sufficiently large). In particular, 
\[
\prw_{x,y}^{S_1}[H_{B_2}<\infty] \leq 1 - \exp\left[-2\prw_{x,y}^{S_1}[H_{B_2}<\infty]\right].
\]
Thus, if we denote by $\widetilde {\mathcal R}_{x,y}$ the range in $B_2$ of the Poisson point process $\eta$ of bridges with intensity $\lambda = 2\prw_{x,y}^{S_1}$, then 
$\mathcal R_{x,y}$ is stochastically dominated by $\widetilde {\mathcal R}_{x,y}$, and it suffices to compare $\widetilde{\mathcal R}_{x,y}$ to $\mathcal R$.

Every bridge visits $B_2$ by means of excursions that start on $S_2$ and end on $S_2'$. Let $\eta_m$ be the restriction of $\eta$ to the bridges that 
make exactly $m$ excursions from $S_2$ to $S_2'$. By properties of Poisson point processes, $\eta_m$ are independent Poisson point processes and 
\[
\eta = \sum_{m=0}^\infty \eta_m.
\]
Furthermore, each $\eta_m$ induces a Poisson point process $\sigma_m$ on $m$-tuples of excursions from $S_2$ to $S_2'$, see Figure~\ref{fig:excursions}. 
To describe its intensity measure, let $\mathcal S$ be the set of all finite nearest neighbor paths starting on $S_2$ and 
ending on their first entrance to $S_2'$. For $w_1,\ldots, w_m\in \mathcal S$, $w_i = (w_i(0),\ldots, w_i(k_i))$, let  
\[
\Gamma_m(w_1,\ldots, w_m) = \prod_{i=1}^m \prw_{w_i(0)}\left[(X_0,\ldots, X_{k_i}) = w_i\right]
\prod_{i=1}^{m-1} \prw_{w_i(k_i)}\left[H_{S_2}<H_{S_1},X_{H_{S_2}} = w_{i+1}(0)\right]
\]
be the probability that the excursions from $S_2$ to $S_2'$ made by a simple random walk started at $w_1(0)$ before it ever visits $S_1$ 
are precisely $w_1,\ldots, w_m$. Note that $\Gamma_m$ is a measure on $\mathcal S^m$.
Then, the intensity measure of $\sigma_m$ is 
\begin{multline*}
\lambda_m(w_1,\ldots, w_m) = 2\,\frac{1}{\prw_x\left[X_{H_{S_1}} = y\right]}\,
\prw_x\left[H_{S_2}<H_{S_1}, X_{H_{S_2}} = w_1(0)\right]\\
\Gamma_m(w_1,\ldots, w_m)\,\prw_{w_m(k_m)}\left[H_{S_1}<H_{S_2}, X_{H_{S_1}} = y\right].
\end{multline*}
By \eqref{eq:hitting1}, \eqref{eq:hitting2} and \eqref{eq:hitting3}, if $s$ and $r$ are sufficiently large, then 
\begin{multline}\label{eq:intensity1}
c\,\left(\frac{r}{s^2}\right)^{d-2}\,\widetilde e_{S_2}(w_1(0))\,\Gamma_m(w_1,\ldots, w_m)\leq \lambda_m(w_1,\ldots, w_m)\\ 
\leq C\,\left(\frac{r}{s^2}\right)^{d-2}\,\widetilde e_{S_2}(w_1(0))\,\Gamma_m(w_1,\ldots, w_m).
\end{multline}
We would like to compare $\lambda_m$ with the intensity measure of 
the Poisson point process of $m$-tuples of excursions from $S_2$ to $S_2'$ induced by the Poisson point process $\mathscr L^{\delta'}_{S_2,S_2'}$ of 
loops that visit $S_2$ and $S_2'$. A slight problem is that these loop excursions are only defined up to a cyclic permutation. 
To avoid this issue, we use Lemma~\ref{l:LalphaAB-basedloops}, which states that 
the Poisson point process $\mathscr L^{\delta'}_{S_2,S_2'}$ can be constructed by (a) sampling the Poisson point process $\eta'$ of \emph{based} loops with intensity measure 
\[
\dot{\ell} \mapsto \delta'\,\frac{1}{k(\dot{\ell})}\,\prw_{x_0}\left[X_i = x_i, 0\leq i\leq |\dot\ell|\right]\,\mathds{1}_{\dot{\mathfrak L}_{S_2,S_2'}}(\dot\ell),\qquad \dot\ell = (x_0,\ldots, x_{|\dot\ell|}),
\]
and (b) ``forgetting'' the location of the root. 
In particular, the ranges in $B_2$ of loops from $\mathscr L^{\delta'}$ that visit both $S_2$ and $S_2'$ and 
that of loops from $\eta'$ have the same distribution. The excursions of loops in $\eta'$ are naturally ordered. 
\begin{figure}[!tp]
\centering
\resizebox{16cm}{!}{\includegraphics{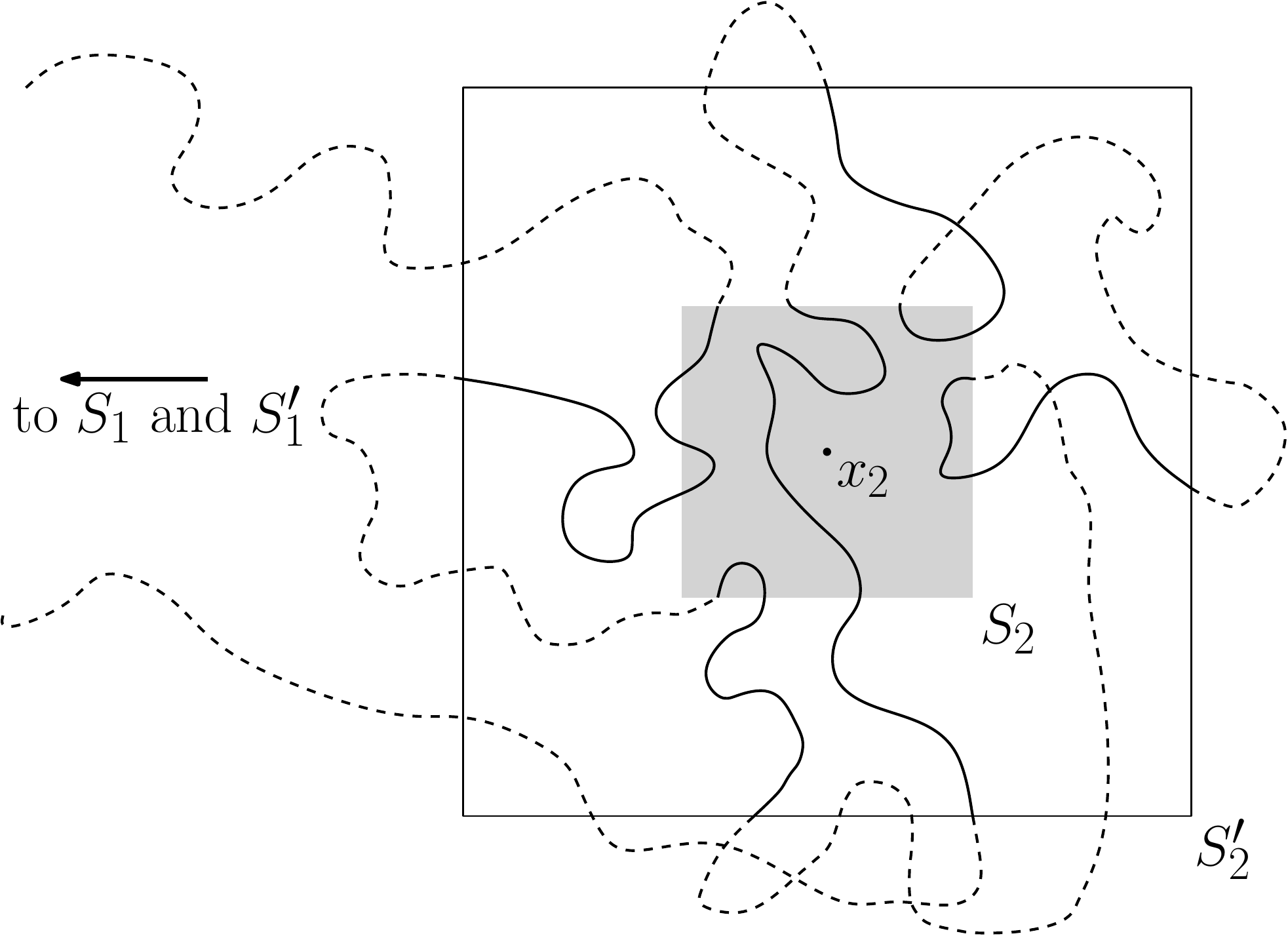}\hspace{8em}\includegraphics{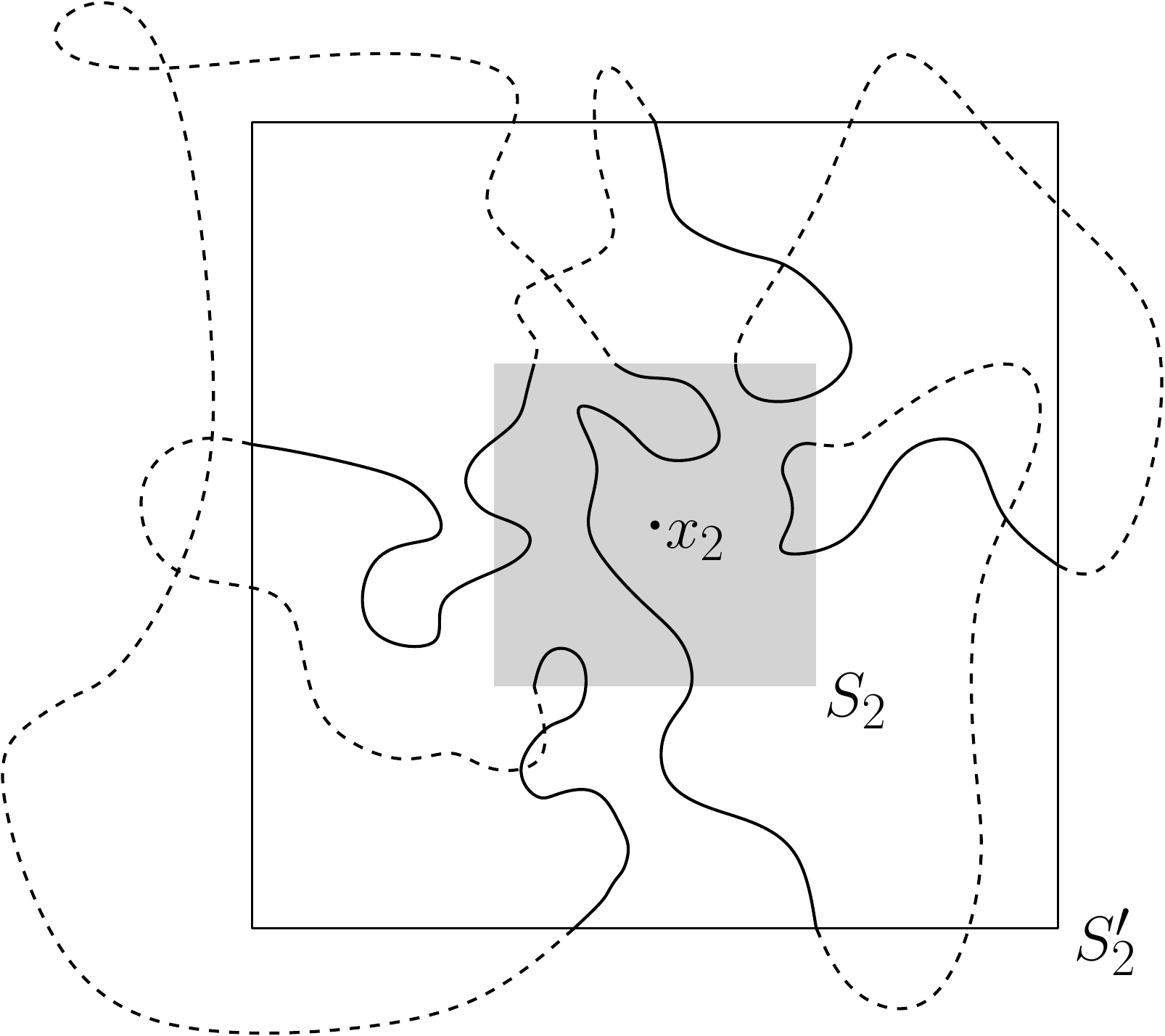}}
\caption{On the left, a $5$-tuple of excursions from $S_2$ to $S_2'$ induced by a random walk bridge from $\eta_5$, on the right, 
by a random walk loop from $\eta_5'$.} 
\label{fig:excursions}
\end{figure}
Let $\eta_m'$ be the restriction of $\eta'$ to the loops that make exactly $m$ excursions, 
then $\eta_m'$ are independent Poisson point processes and $\eta' = \sum_{m=1}^\infty \eta_m'$. 
Furthermore, $\eta_m'$ induces a Poisson point process $\sigma_m'$ on $m$-tuples of excursions (see Figure~\ref{fig:excursions}) with intensity measure 
\[
\lambda_m'(w_1,\ldots, w_m) = \delta'\,\frac{1}{m}\,\Gamma_m'(w_1,\ldots, w_m)\,\prw_{w_m(k_m)}\left[X_{H_{S_2}}=w_1(0)\right],
\]
where 
\[
\Gamma_m'(w_1,\ldots, w_m) = \prod_{i=1}^m \prw_{w_i(0)}\left[(X_0,\ldots, X_{k_i}) = w_i\right]\,
\prod_{i=1}^{m-1} \prw_{w_i(k_i)}\left[X_{H_{S_2}} = w_{i+1}(0)\right].
\]
In particular, by Lemma~\ref{l:hitting}, 
\begin{multline}\label{eq:intensity2}
c\,r^{2-d}\, \delta'\,\frac{1}{m}\, \widetilde e_{S_2}(w_1(0)) \Gamma_m'(w_1,\ldots, w_m)\leq \lambda_m'(w_1,\ldots, w_m)\\ 
\leq C\,r^{2-d}\, \delta'\,\frac{1}{m}\, \widetilde e_{S_2}(w_1(0)) \Gamma_m'(w_1,\ldots, w_m).
\end{multline}
It is immediate that $\Gamma_m\leq \Gamma_m'$. Thus, by \eqref{eq:intensity1} and \eqref{eq:intensity2}, $\lambda_m\leq \lambda_m'$ if 
\begin{equation}\label{eq:m}
m\leq c\,\left(\frac{s}{r}\right)^{2(d-2)}\,\delta',
\end{equation}
which implies that for these $m$'s, $\sigma_m$ is stochastically dominated by $\sigma_m'$. 
In particular, if $\sigma_m = 0$ for all $m> c\,\left(\frac{s}{r}\right)^{2(d-2)}\,\delta'$, then $\sum_{m=1}^\infty\sigma_m$ is stochastically dominated by $\sum_{m=1}^\infty\sigma_m'$. 

\smallskip

Let $G_2$ be the event that $\sigma_m = 0$ for all $m> c\,\left(\frac{s}{r}\right)^{2(d-2)}\,\delta'$. It follows that 
there exists a coupling $(\overline{\mathcal R}_{x,y},\overline {\mathcal R})$ of $\mathcal R_{x,y}$ and $\mathcal R$ such that 
\[
\P\left[\overline{\mathcal R}_{x,y}\subseteq \overline {\mathcal R}\right]\geq \P[G_2].
\]
Finally, for each $m$, using \eqref{eq:intensity1} and Lemma~\ref{l:hitting}, 
\[
\P[\sigma_m \neq 0]\leq \lambda_m[\mathcal S^m] \leq C\,\left(\frac{r}{s^2}\right)^{d-2}\,\left(C\,r^{2-d}\right)^{m-1}.
\]
Thus, by choosing $r$ sufficiently large (depending only on the dimension), 
\[
\P[G_2^c]\leq Cs^{2(2-d)}\,\exp\left[-c\delta'\,s^{2(d-2)}\right],
\]
which completes the proof of the lemma.
\qed

\begin{remark}\label{rem:decoupling:proof}[Some comments on the proof of Theorem~\ref{thm:decoupling}] 
The following observations suggest that the error term of \eqref{eq:decoupling} and \eqref{eq:decoupling:decreasing} 
could not be improved with our method. 
\begin{enumerate}[leftmargin=*]
\item 
The estimate \eqref{eq:G1-G1c} may at first look rather crude. It seems better to consider events $F_k = \{\mathcal Z = k\}$ and write
\[
\mathbb E^\alpha\left[f_1\, f_2\right] =
\sum_{k=0}^\infty\,\mathbb E^\alpha\left[f_1\mathds{1}_{F_k}\,\mathbb E^\alpha\left[f_2~|~\mathcal E\right]\right].
\]
However, using Lemma~\ref{l:bridgerange} to bound $\mathbb E^\alpha\left[f_2~|~\mathcal E\right]$ and the exact asymptotics of $\mathbb P^\alpha[F_k]$, 
one would get the error term in the form $\sum_{k=1}^\infty \exp\left(-ck - c\frac{\delta}{k}\,s^{2(d-2)}\right)$, 
which is precisely of the order $\exp(-c\sqrt{\delta}\,s^{d-2})$.
\item
In the comparison of intensity measures $\lambda_m$ and $\lambda_m'$ in the proof of Lemma~\ref{l:bridgerange} we use the trivial bound $\Gamma_m\leq \Gamma_m'$, 
which allows to conclude that $\lambda_m\leq\lambda_m'$ only for $m$ satisfying \eqref{eq:m}. 
By taking into account the information that the random walk bridge does not return to $S_1$ between the excursions $w_i$, one can show that for every $m$, 
\[
\Gamma_m \leq \left(1 - c\left(\frac{r}{s^2}\right)^{d-2}\right)^{m-1}\,\Gamma_m'. 
\]
This gives no improvement to the trivial bound for the $m$s of the order $\left(\frac{s}{r}\right)^{2(d-2)}\,\delta'$, although it does imply $\lambda_m\leq\lambda_m'$ for all large enough $m$. 

Incidentally, using this better comparison of $\Gamma_m$ and $\Gamma_m'$ one obtains that $\lambda_m \leq c\,r^{2-d}\,\delta'\,\lambda_m'$ for every $m$. 
In particular, if $\delta'\geq C\,r^{d-2}$, then the range of the random walk bridge in $B_2$ is stochastically dominated by the range of the loop soup $\mathscr L^{\delta'}$ (with probability $1$). 
\end{enumerate}
\end{remark}

\begin{remark}
The arguments of the proof of Theorem~\ref{thm:decoupling} apply also to loop soups of random walks with general bounded jump distributions considered in \cite{LL10} 
as well as to the Brownian loop soup defined in \cite{LW04}, leading to analogous decoupling inequalities for these models. 
\end{remark}

\section{Proof of Theorem~\ref{thm:lu}}\label{sec:proof-local-uniqueness}

The overall idea of the proof is similar to that of \cite{DRS-AIHP}, where a result analogous to Theorem~\ref{thm:lu} is proven for the 
vacant set of random interlacements, although the implementations are quite different. 
As in \cite{DRS-AIHP} we partition the lattice $\Z^d$ into good and bad boxes. 
Each good box has a vacant ``frame'' (see Definition~\ref{def:edges}) and uniformly bounded cumulative occupation local times for $\mathscr L^\alpha$. 
In Proposition~\ref{prop:bad-*paths} and Corollary~\ref{cor:bad:*paths} we prove that 
the set of good boxes typically contains an infinite connected component, 
whose complement consists only of small holes. When it is the case, any vacant path of big diameter will pass through a large number of good boxes. 
However, each time the path enters a good box, there is a uniformly positive probability that it locally connects to the frame of the good box, 
as proved in Lemma~\ref{l:surgery}, which makes the existence of long isolated vacant paths unlikely. 

Let us indicate the key differences of our approach from that in \cite{DRS-AIHP}. 
The existence of a ubiquitous infinite cluster of good boxes is proven in \cite{DRS-AIHP} using in an essential way 
a strong version of decoupling inequalities for random interlacements (see \cite[Theorem~7.2]{DRS-AIHP}). 
Because of an explicit and very specific dependence of the error term on the intensity of random interlacements and relevant scales (see \cite[(7.5)]{DRS-AIHP}), 
these decoupling inequalities imply a qualitative bound on the probabilities of cascading events 
under the assumption that a box of size $L_0$ is unlikely to be bad for the random interlacements with intensity $L_0^{2-d}$ for large $L_0$ (see \cite[Lemma~2.2]{DRS-AIHP}), 
which is verified in \cite[Lemma~3.5]{DRS-AIHP}. The ubiquity of good boxes then follows easily from \cite[Lemma~2.2]{DRS-AIHP}, see \cite[Lemma~3.6]{DRS-AIHP}. 

There are several issues in adapting this approach to our setting. 
The decoupling inequalities \eqref{eq:decoupling} and \eqref{eq:decoupling:decreasing} are weaker than the ones in \cite[Theorem~7.2]{DRS-AIHP} 
(e.g., the latter imply the decoupling inequalities \ppp{}, which are not available for the loop soup, cf.\ Remark~\ref{rem:p3-loopsoup}). 
Still, they do give an analogue of \cite[Lemma~2.2]{DRS-AIHP} under a stronger assumption that large boxes are unlikely to be bad for the loop soup with a fixed intensity 
(see Theorem~\ref{thm:badseed:proba}). 
This assumption cannot be true for the loop soup though, predominantly because of the positive density of small loops. 

Instead of trying to solve these issues (which, even if successful, would only give \eqref{eq:lu:1} and \eqref{eq:lu:2} with probability $\geq 1 - C\exp(-(\log n)^{1+\epsilon})$, 
since the scales $L_n$ in Theorem~\ref{thm:badseed:proba} grow faster than exponentially), 
we develop an approach that does not rely on decoupling inequalities.
We use an idea from \cite{Teixeira} adapted to our setting to bound the probability that a suitably spread out family of 
boxes consists only of bad ones (see Lemma~\ref{l:bad-leaves}) directly using the decomposition of loops into excursions (Proposition~\ref{prop:sampling-loopsoup}) and 
the large deviation bound on the number of excursions (Lemma~\ref{l:excursions-AB-num}). 
This approach may be of independent interest, since it could potentially apply to models, 
for which decoupling inequalities are not available or have not been developed yet, such as, e.g., the voter percolation \cite{RV17}.

\smallskip

Fix an integer $R\geq 1$, let $L_0 = 2R+1$ and consider the lattice 
\[
\GG_0 = L_0\,\Z^d
\]
with edges between any $\ell_1$ nearest neighbor vertices of $\GG_0$. 
If $x',y'\in\GG_0$ are neighbors, we write $x'\nngg y'$. 
For $n\in\N$ and $x'\in\GG_0$, let $\bb(x',n) = \{y'\in\GG_0: \|x'-y'\|\leq L_0n\}$ and 
$\ss(x',n) = \{y'\in\GG_0: \|x'-y'\|= L_0n\}$ be the $\ell_\infty$ ball, resp., sphere, of radius $n$ in $\GG_0$ centered at $x'$.

For $x'\in\GG_0$, define 
\[
\cube(x') = \ball(x',R).
\]
Then, $\{\cube(x'),\, x'\in \GG_0\}$ is a partition of $\Z^d$ into disjoint hypercubes.

\begin{definition}\label{def:edges}
Let $\edges$ be the subset of $\cube(0)$ that consists of all vertices having at least two of their coordinates 
in the set $\{-R,-R+1,-R+2,R-2,R-1,R\}$ and define 
\[
\edges(x') = x' + \edges,\quad x'\in\GG_0.
\]
(For $d=3$, $\edges(x')$ is just the $\ell_\infty$ $2$-neighborhood of the edges of the cube $\cube(x')$.) 
\end{definition}
Note that 
\begin{itemize}\itemsep0pt
\item
the set $\edges$ is connected in $\Z^d$, 
\item
for any $x_1'\nngg x_2'\in\GG_0$, the set $\edges(x_1')\cup\edges(x_2')$ is connected in $\Z^d$, 
\end{itemize}

Any function $\lt:\Z^d\to\N_0 = \{0,1,\ldots\}$ gives a decomposition of $\GG_0$ into good and bad vertices:
\begin{definition}\label{def:good}
Let $\lt:\Z^d\to\N_0$. Vertex $x'\in\GG_0$ is \emph{$R$-good} for $\lt$ if 
\begin{enumerate}\itemsep1pt
\item[(1)]
$\lt(x) = 0$ for all $x\in\edges(x')$,
\item[(2)]
$\sum_{x\in\dint\cube(x')} \lt(x) \leq R^{d-1}$. 
\end{enumerate}
Otherwise, $x'$ is \emph{$R$-bad} for $\lt$. 
\end{definition}
\begin{remark}\label{rem:good}
In our applications, $\sum_{x\in\dint\cube(x')} \lt(x)$ will correspond to the number of times a finite collection of independent random walks visit 
$\dint\cube(x')$, cf.\ \eqref{eq:Rd-1}. Thus, $R^{d-1}$ in Definition~\ref{def:good}(2) could be replaced by any $f(R)\gg R$. 
\end{remark}
We write 
\[
\begin{split}
\good(\lt) &= \{x'\in\GG_0~:~x'\text{ is $R$-good for $\lt$}\},\\
\bad(\lt) &= \{x'\in\GG_0~:~x'\text{ is $R$-bad for $\lt$}\}.
\end{split}
\]

\medskip

The choice of $\alpha_1>0$ in Theorem~\ref{thm:lu} is made in the following proposition, 
which is proven in Section~\ref{sec:prop:bad-*paths}. 
Recall that $\ltloop^\alpha$ denotes the field of local times of the loop soup $\mathscr L^\alpha$.
\begin{proposition}\label{prop:bad-*paths}
For any $d\geq 3$, there exist $R\geq 1$, $\alpha_1>0$, $c>0$ and $C<\infty$ such that 
for all $\alpha\leq \alpha_1$ and $N\geq 1$, 
\begin{equation}\label{eq:bad-*paths}
\P\left[
\text{$0$ is $*$-connected to $\ss(0,N)$ in $\bad(\ltloop^\alpha)$}
\right]
\leq C\,\exp\left(-N^c\right) .
\end{equation}
(Sets $X,Y\subset\GG_0$ are $*$-connected in $Z\subset\GG_0$ if there exist $z_0,\ldots,z_n\in Z$ such that 
$z_0\in X$, $z_n\in Y$ and $\|z_i-z_{i-1}\|=L_0$ for all $1\leq i\leq n$.)
\end{proposition}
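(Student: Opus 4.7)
The approach is a Peierls-type renormalization tailored to the loop soup: combinatorially enumerate $*$-connected sequences of $R$-bad boxes, extract a well-separated sub-family from each, and estimate the probability that the sub-family is jointly bad using the excursion decomposition of Proposition~\ref{prop:sampling-loopsoup} together with the Poisson tail from Lemma~\ref{l:excursions-AB-num}. As flagged in the opening of Section~\ref{sec:proof-local-uniqueness}, the decoupling inequality of Theorem~\ref{thm:decoupling} is too weak to drive a cascading scheme down to the required scale, so the probabilistic input must be obtained directly from the loop ensemble, in the spirit of \cite{Teixeira}.

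\textbf{Combinatorics and extraction.} Any $*$-connected sequence $0 = z_0, z_1, \ldots, z_n$ in $\GG_0$ reaching $\ss(0,N)$ has $n \geq N$, and the total number of such sequences is bounded by $C_d^n$. For a large integer $M$ to be chosen later (dimensional, possibly $M = M(N)$), greedily extract from each sequence a sub-family $x_1', \ldots, x_m' \in \GG_0$ with $\|x_i' - x_j'\| \geq M L_0$ for $i \neq j$ and size $m \geq c_d\, n / M$; the number of choices of such a sub-family within a given sequence is at most $2^n$.

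\textbf{Joint badness --- the key step.} For a fixed $M$-spread-out family the goal is
\[
\P\bigl[x_i' \in \bad(\ltloop^\alpha)\text{ for all }i\bigr]\;\leq\; (C_R\,\alpha)^{cm} + C\,e^{-cm}.
\]
Split $\mathscr L^\alpha$ into short loops (diameter $\leq M L_0 / 4$) and long loops. A short loop that touches $\cube(x_i')$ is confined to $\ball(x_i', M L_0 / 3)$, and these enlargements are pairwise disjoint, so short-loop contributions to the individual badness events are independent across the family; a first-moment bound --- the frame event has probability $\leq C\alpha\,\capa(\edges(x_i')) \leq C\alpha R^{d-2}$, and Markov applied to $\mathbb E^\alpha[\sum_{x \in \dint\cube(x_i')} \ltloop^\alpha(x)] = O(\alpha R^{d-1})$ controls the local-time event --- yields an $O(\alpha)$ per-box probability. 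For the long loops apply Proposition~\ref{prop:sampling-loopsoup} with $A = \bigcup_i \ball(x_i', R + 2)$ and $B = \bigcup_i \dint\ball(x_i', M L_0 / 4)$; by \eqref{eq:hitting1} the hypothesis \eqref{eq:excursions-AB-num-assumption} reduces to $m\,M^{-(d-2)} \leq c$, and Lemma~\ref{l:excursions-AB-num} then gives $\P[\mathcal Z \geq m/2] \leq e^{\alpha - m/2}$ for the total number of $A \!\to\! B$ excursions. Since each long loop intersecting $\cube(x_i')$ contributes at least one such excursion, on $\{\mathcal Z < m/2\}$ at least $m/2$ boxes must be bad through short loops only, and a union bound over the $\binom{m}{m/2} \leq 2^m$ choices of the short-bad half yields the displayed estimate.

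\textbf{Assembly and main obstacle.} Summing over $n \geq N$ with combinatorial weight $(2 C_d)^n$ produces $C e^{-N^c}$ once $M$ and $\alpha_1$ are balanced appropriately. The principal technical difficulty is the joint badness step: the events $\{x_i' \in \bad\}$ are truly non-independent because macroscopic loops may visit many distant boxes simultaneously, and reconciling this with the hitting-probability constraint $m\,M^{-(d-2)} \leq c$ forces $M$ to depend on $N$ (e.g.\ $M \sim N^{1/(d-1)}$). This dimensional trade-off --- $M$ large enough for Lemma~\ref{l:excursions-AB-num}, yet small enough to absorb the combinatorial factor $(2 C_d)^n$ --- is what produces the stretched-exponential rate $\exp(-N^c)$ rather than a plain exponential, and is the main obstacle in the proof.
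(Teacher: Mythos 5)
Your probabilistic core — controlling the total number of large-loop excursions via Lemma~\ref{l:excursions-AB-num}, exploiting independence of small loops confined to disjoint enlargements, and first-moment per-box estimates involving $\capa(\edges)$ and the local time on $\dint\cube(x')$ — is essentially the same mechanism as the paper's key step (Lemma~\ref{l:bad-leaves}). The genuine gap is in the combinatorial skeleton. With a flat Peierls enumeration of $*$-paths and a pairwise $ML_0$-separated extraction, the hypothesis \eqref{eq:excursions-AB-num-assumption} for $A=\bigcup_i\ball(x_i',R+2)$ forces, as you note, a condition of the type $m\,M^{-(d-2)}\leq c$; since paths of every length $n\geq N$ contribute, this makes $M$ grow with the path length $n$ (roughly $M\gtrsim n^{1/(d-1)}$, and no refinement of the union bound for pairwise-separated boxes removes this, because the extracted boxes can fill a $d$-dimensional region). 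But then the gain per path is only of order $(C_R\alpha)^{cm}+e^{-cm}$ with $m\asymp n/M=o(n)$, which is sub-exponential in $n$ and cannot absorb the entropy $(2C_d)^n$ of length-$n$ $*$-paths for any \emph{fixed} $\alpha_1>0$: closing the sum would require $\log(1/\alpha)\gtrsim M(n)\to\infty$, i.e.\ $\alpha$ shrinking with $N$. So the ``dimensional trade-off'' you flag as the main obstacle is not the source of the stretched-exponential rate; as set up, it makes the argument impossible with $\alpha_1$ fixed.

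The paper resolves exactly this by replacing the flat extraction with hierarchical dyadic-tree embeddings $\mathcal T\in\Lambda_n$ over geometric scales $L_k=L_0l^k$ (in the spirit of Sznitman and R\'ath rather than the skeletons of \cite{Teixeira}). Two features of the hierarchy are essential and are what your scheme lacks simultaneously: (i) the number of embeddings is only $\exp(C\,2^n)$, exponential in the number $2^n$ of selected boxes (Lemma~\ref{l:Lambda-n}(1)), so a fixed per-box gain $e^{-K}$ with $K=K(l)$ large, obtained by first choosing $R$ large and then $\alpha$ small, beats the entropy; and (ii) the leaves are hierarchically spread — at most $2^k$ leaves within distance of order $L_{k+1}$ (Lemma~\ref{l:Lambda-n}(2)) — so the hitting probability of the union $C_{\mathcal T}$ from $D_{\mathcal T}$ is bounded by the convergent series $\sum_k(2l^{2-d})^k\leq\frac{1}{2e}$ for a large but \emph{fixed} $l$ (Lemma~\ref{l:D-C-hitting}), uniformly in the number of leaves, which is what makes Lemma~\ref{l:excursions-AB-num} applicable with constants independent of $n$. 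The connection event then implies the existence of such an embedding with all leaf boxes bad, and the stretched-exponential rate $\exp(-N^c)$ comes from comparing the resulting bound $2^{-2^n}$ at scale $L_n$ with $N\asymp l^n$ — not from balancing a separation parameter against the entropy. To repair your proof you would need to replace the pairwise-separated extraction by such a hierarchical selection (or an equivalent skeleton structure); the rest of your argument would then go through along the lines of Lemma~\ref{l:bad-leaves}.
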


Proposition~\ref{prop:bad-*paths} easily implies the existence of 
(a) unique infinite component $\mathcal G^\alpha_\infty$ in $\good(\ltloop^\alpha)$ and 
(b) ubiquitous connected component $\mathcal G^\alpha_N$ in $\good(\ltloop^\alpha)\cap \bb(0,N)$:

\begin{corollary}\label{cor:bad:*paths}
Fix $R\geq 1$ and $\alpha_1>0$ as in Proposition~\ref{prop:bad-*paths}. 
There exist $c'=c'(d)>0$, and $C'=C'(d)<\infty$ such that for all $\alpha\leq \alpha_1$ and $N\geq 1$, 
\begin{enumerate}
\item[(a)]
there exists a unique infinite connected (in $\GG_0$) component $\mathcal G^\alpha_\infty$ of $\good(\ltloop^\alpha)$ and  
\begin{equation}\label{eq:Ginfty:ball}
\mathbb P\left[\mathcal G^\alpha_\infty\cap \bb(0,N)\neq\emptyset \right]
\geq 1 - C'\, \exp\left(-N^{c'}\right) , 
\end{equation}
\item[(b)]
if $\mathcal G^\alpha_N$ denotes a unique connected component of $\good(\ltloop^\alpha)\cap \bb(0,N)$ such that 
any nearest neighbor path in $\GG_0$ from any $x'\in \bb(0,\lfloor\frac23N\rfloor)$ to $\ss(x',\lfloor\frac{1}{30}N\rfloor)$ 
intersects $\mathcal G^\alpha_N$ at least $\sqrt{N}$ times, or the empty set if such component does not exist, then 
\begin{equation}\label{eq:SN:ball}
\mathbb P\left[\mathcal G^\alpha_N\neq \emptyset\right] \geq 1 - C'\,\exp\left(- N^{c'}\right).
\end{equation}
\end{enumerate}
\end{corollary}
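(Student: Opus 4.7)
The plan is to derive both statements from Proposition~\ref{prop:bad-*paths} by first establishing part~(b) and then using it to bootstrap part~(a). A preliminary remark: translation invariance applied to Proposition~\ref{prop:bad-*paths} together with a countable union over $\GG_0$ imply that almost surely every $*$-connected cluster of $\bad(\ltloop^\alpha)$ is finite.

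For part~(b), I would introduce the auxiliary scale $m = \lfloor N^\beta\rfloor$ with $\beta\in(0,c/2)$ and the event
\[
F_N = \bigl\{\text{every }*\text{-cluster of }\bad(\ltloop^\alpha)\text{ meeting }\bb(0,N)\text{ has diameter}\leq m\bigr\}.
\]
A union bound over the vertices of $\bb(0,N)$ combined with Proposition~\ref{prop:bad-*paths} yields $\P(F_N^c)\leq CN^d e^{-m^c}\leq C'e^{-N^{c'}}$. On $F_N$ a standard detour argument shows that any two good vertices in $\bb(0,N-m)$ belong to the same connected component of $\good(\ltloop^\alpha)\cap\bb(0,N)$: one starts from a lattice segment between them and, each time it enters a bad $*$-cluster (of diameter $\leq m$), detours around it through good vertices with a path of length $O(m)$ that remains inside $\bb(0,N)$. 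I define $\mathcal{G}^\alpha_N$ to be this single large component of $\good(\ltloop^\alpha)\cap\bb(0,N)$; it is non-empty because on $F_N$ the set $\bb(0,m+1)$ must contain a good vertex (otherwise $0$ would lie in a bad $*$-cluster of diameter $>m$).

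The ubiquity condition on $F_N$ follows immediately from the same geometric considerations. Any nearest-neighbour path $\gamma$ in $\GG_0$ from $x'\in\bb(0,\lfloor 2N/3\rfloor)$ to $\ss(x',\lfloor N/30\rfloor)$ is contained in $\bb(0,N-m)$ for $N$ large, and each maximal run of consecutive bad vertices on $\gamma$ lies inside a single bad $*$-cluster, hence has length $\leq m$; thus $\gamma$ contains at least $\lfloor N/30\rfloor/(m+1)\geq\sqrt N$ good vertices, each of which lies in $\bb(0,N-m)$ and therefore in $\mathcal{G}^\alpha_N$. Uniqueness of the component satisfying the ubiquity property holds because any other connected component of $\good(\ltloop^\alpha)\cap\bb(0,N)$ must be contained in the thin annulus $\bb(0,N)\setminus\bb(0,N-m)$ and is therefore avoided by some nearest-neighbour path of length $\lfloor N/30\rfloor$ starting from $0\in\bb(0,\lfloor 2N/3\rfloor)$.

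For part~(a), uniqueness of $\mathcal{G}^\alpha_\infty$ follows from the Deuschel--Pisztora/Timar lattice-topology duality on $\Z^d$: two disjoint infinite good NN-components would, by applying the duality to the connected component of the complement of one that contains the other, produce an infinite $*$-connected subset of $\bad(\ltloop^\alpha)$, contradicting the almost sure finiteness noted above. For the quantitative bound \eqref{eq:Ginfty:ball}, on the event $\bigcap_{k\geq 0}F_{2^kN}$ the detour argument carried out inside each box $\bb(0,2^kN)$ shows that the connected component of $\mathcal{G}^\alpha_N$ inside $\good(\ltloop^\alpha)$ contains $\mathcal{G}^\alpha_{2^kN}$ for every $k$, hence this component is infinite and must coincide with $\mathcal{G}^\alpha_\infty$. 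Consequently
\[
\{\mathcal{G}^\alpha_\infty\cap\bb(0,N)=\emptyset\}\ \subset\ \{\mathcal{G}^\alpha_N=\emptyset\}\cup\bigcup_{k\geq 1}F_{2^kN}^c,
\]
and the probability of each contribution is bounded by $C''e^{-N^{c''}}$ by part~(b) and Proposition~\ref{prop:bad-*paths}. The main technical subtlety is the calibration of the scale $m$: it must be large enough for $\P(F_N^c)$ to decay stretched-exponentially via Proposition~\ref{prop:bad-*paths}, yet strictly smaller than $\sqrt N$ to guarantee the required $\sqrt N$ intersection count along $\gamma$.
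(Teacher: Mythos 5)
Your overall strategy is the same as that of the proof the paper refers to (\cite[Corollary~3.7]{DRS-AIHP}): use Proposition~\ref{prop:bad-*paths} and a union bound to ensure that all bad $*$-clusters meeting $\bb(0,N)$ have diameter at most $m=\lfloor N^\beta\rfloor$, then a deterministic connectivity and counting argument, then iteration over the scales $2^kN$ for part~(a). However, the deterministic step contains a genuine gap: it is \emph{not} true that on $F_N$ any two good vertices of $\bb(0,N-m)$ lie in the same component of $\good(\ltloop^\alpha)\cap\bb(0,N)$. A good vertex can be enclosed by a bad $*$-cluster of diameter $2\leq m$ (for instance, when all of its $3^d-1$ $\ell_\infty$-neighbours in $\GG_0$ are bad), in which case its good component is a singleton. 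Hence the component ``containing all good vertices of $\bb(0,N-m)$'' with which you define $\mathcal G^\alpha_N$ does not exist, and, more importantly, the $\geq \lfloor N/30\rfloor/(m+1)$ good vertices you find on the path $\gamma$ need not belong to $\mathcal G^\alpha_N$: they may be trapped in holes of small bad clusters. The same oversight invalidates your uniqueness argument in (b) (``any other component is contained in the thin annulus''), since trapped components can sit anywhere in the bulk, and it also affects the iteration in (a), which inherits the faulty claim.

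The argument can be repaired, but one must work with the \emph{fillings} $\bar S$ of the bad $*$-clusters $S$ (the union of $S$ with the finite nearest-neighbour components of its complement): one checks that $\mathrm{diam}(\bar S)\leq\mathrm{diam}(S)\leq m$, that distinct maximal fillings are not nearest-neighbour adjacent, and that every vertex at $\ell_\infty$-distance $1$ from $\bar S$ is good (by maximality of $S$). Then $\gamma$ contains at least $cN/m$ good vertices lying \emph{outside all fillings} (count $\ell_\infty$-displacement rather than run length: a maximal run of $\gamma$ inside a filling can be much longer than $m$, so your phrase ``hence has length $\leq m$'' is also incorrect as stated, though harmless once displacement is used), and these free good vertices in $\bb(0,N-m-2)$ are connected inside $\bb(0,N)$ by detours running in the distance-one shells of the fillings. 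It is this shell connectivity (or, alternatively, the $*$-connectedness of outer boundaries of nearest-neighbour connected sets, applied by contradiction as in \cite{DRS-AIHP}) that must be invoked; note that the naive dual statement ``the nearest-neighbour outer boundary of a $*$-connected set is nearest-neighbour connected'' is false, so the ``standard detour'' needs to be set up with the filled clusters and their shells. With these modifications your probabilistic estimates, the choice of $m$, and the multiscale argument for part~(a) do go through.
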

\begin{proof}[Proof of Corollary~\ref{cor:bad:*paths}]
The proof is essentially the same as the proof of \cite[Corollary~3.7]{DRS-AIHP}, where the role of Proposition~\ref{prop:bad-*paths} 
is played by \cite[Lemma~3.6]{DRS-AIHP}. We omit the details. 
\end{proof}

\begin{proof}[Proof of Theorem~\ref{thm:lu}]
The first statement \eqref{eq:lu:1} follows immediately from \eqref{eq:Ginfty:ball}, since the set 
$\bigcup_{x'\in \mathcal G^\alpha_\infty}\edges(x')$ is an infinite connected subset of the vacant set $\mathcal V^\alpha$. 
To prove \eqref{eq:lu:2}, by the union bound, it suffices to show that for each $x\in \ball(0,L_0\lfloor\frac23N\rfloor)$, 
\begin{equation}\label{eq:x-to-SalphaN}
\mathbb P\left[\begin{array}{c}
\text{$x$ is connected to $\Z^d\setminus \ball(x,L_0\lfloor\frac{1}{25}N\rfloor)$ in $\mathcal V^\alpha$,}\\[4pt] 
\text{but not to $\bigcup_{x'\in\mathcal G^\alpha_N}\edges(x')$ in $\mathcal V^\alpha\cap \ball(0,L_0N+R)$}
\end{array}\right]
\leq C''\,\exp\left(- N^{c''}\right).
\end{equation}
(To link \eqref{eq:x-to-SalphaN} to \eqref{eq:lu:2}, one can take $N = \lfloor \frac{2n-R}{L_0}\rfloor$, 
then $L_0\lfloor\frac23N\rfloor\geq n$ and $L_0\lfloor\frac{1}{25}N\rfloor\leq \frac{1}{10}n$ for all large enough $n$.)

\smallskip

The main idea of the proof of \eqref{eq:x-to-SalphaN} is to explore the connected component of $x$ in $\mathcal V^\alpha$ progressively in boxes $\cube(x')$, $x'\in\GG_0$.  
If the ubiquitous component $\mathcal G^\alpha_N$ of good vetices is not empty, then the cluster of $x$ will encounter at least $\sqrt N$ boxes 
centered at vertices from $\mathcal G^\alpha_N$. Each time the encounter happens, excluding possibly the very first box, the explored part of the cluster of $x$ 
connects locally to the set $\bigcup_{x'\in\mathcal G^\alpha_N}\edges(x')$ with probability at least $\gamma$ uniformly over all possible realizations of good boxes 
and the explored history. This will lead to the upper bound $(1-\gamma)^{\sqrt{N}-1}$.

\smallskip

The lower bound on the conditional probability of the local connectedness to $\bigcup_{x'\in\mathcal G^\alpha_N}\edges(x')$ after each step of exploration follows from Lemma~\ref{l:surgery} below.
For $R\geq 1$ and $\alpha>0$, denote by 
\[
\Sigma_{\mathcal G} = \sigma\left(\mathds{1}_{x'\in \good(\ltloop^\alpha)},\,x'\in\GG_0\right)
\]
the $\sigma$-algebra generated by all the good boxes for $\ltloop^\alpha$. (Note that $\mathcal G^\alpha_N$ is $\Sigma_{\mathcal G}$-measurable.)
For any $x'\in\GG_0$, denote by 
\[
\mathcal A_{x'} = \sigma\left(\mathds{1}_{z\in\mathcal V^\alpha},\,z\notin \cube(x')\right)
\]
the $\sigma$-algebra generated by the vacant set $\mathcal V^\alpha$ (equivalently, by the range) of the loop soup $\mathscr L^\alpha$ outside of the box $\cube(x')$. 
\begin{lemma}\label{l:surgery}
Let $d\geq 3$, $R\geq 1$ and $\overline\alpha>0$. 
There exists $\gamma = \gamma(d,R,\overline\alpha)>0$ such that for all $\alpha\in(0,\overline\alpha]$, $x'\in\GG_0$ and $y\in\dext\cube(x')$, 
\begin{equation}\label{eq:surgery}
\mathbb P\left[
\text{$y$ is connected to $\edges(x')$ in $\mathcal V^\alpha\cap(\{y\}\cup\cube(x'))$}
\,\Big|\,\Sigma_{\mathcal G},\,\mathcal A_{x'} \right] \geq \gamma\,\mathds{1}_{y\in\mathcal V^\alpha, x'\in\good(\ltloop^\alpha)},\,\,\,\, \mathbb P\text{-a.s.}
\end{equation}
\end{lemma}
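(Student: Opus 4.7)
The plan is a local surgery argument inside $\cube(x')$. I reduce to the event $\{y \in \mathcal V^\alpha\} \cap \{x' \in \good(\ltloop^\alpha)\}$, else the right-hand side of \eqref{eq:surgery} vanishes. Let $y'$ denote the unique $\ell_1$-neighbor of $y$ in $\cube(x')$, so $y' \in \dint\cube(x')$. If $y' \in \edges(x')$, the conclusion is immediate since $\edges(x')$ is vacant on the good event; so assume $y' \notin \edges(x')$, which forces $y'$ to have exactly one coordinate at $\pm R$ relative to $x'$ and its unique outside $\ell_1$-neighbor to be $y$ itself. Crucially, since $y$ is vacant, no crossing loop of $\cube(x')$ can enter or exit $\cube(x')$ at $y'$ (any such entry/exit would force the loop to visit $y$). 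It therefore suffices to build a vacant path from $y'$ to $\edges(x')$ inside $\cube(x')$ with conditionally positive probability.

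I would decompose $\mathscr L^\alpha$ into three independent Poisson parts: $\mathscr L^\alpha_{\mathrm{out}}$ of loops disjoint from $\cube(x')$, $\mathscr L^\alpha_{\mathrm{cross}}$ of loops intersecting both $\cube(x')$ and its complement, and $\mathscr L^\alpha_{\mathrm{in}}$ of loops contained in $\cube(x')$. By Proposition~\ref{prop:sampling-loopsoup} applied with $A = \dint\cube(x')$ and $B = \dext\cube(x')$, each crossing loop decomposes into alternating interior and exterior excursions which, conditional on the boundary data (the ordered sequence of entry/exit pairs $(a_i, b_i) \in A \times B$), are independent random walk bridges with laws $\prw^B_{a_i, b_i}$. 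Let $\mathcal F$ be the $\sigma$-algebra generated by $\mathscr L^\alpha_{\mathrm{out}}$, the exterior excursions of crossing loops, the boundary data, and the indicator $\mathds{1}_{\{x' \in \good\}}$. Then $\mathcal A_{x'} \vee \Sigma_{\mathcal G} \subseteq \mathcal F$, so by the tower property it suffices to bound $\P[\,\cdot\,\mid\mathcal F] \geq \gamma$ a.s.\ on the good event. Under $\mathcal F$ the remaining randomness consists of $\mathscr L^\alpha_{\mathrm{in}}$ together with the conditionally independent bridges $W_1, \ldots, W_K$, and on the good event $K \leq R^{d-1}$ since $\sum_{x \in \dint\cube(x')}\ltloop^\alpha(x) \leq R^{d-1}$ and each bridge contributes at least one visit to $\dint\cube(x')$.

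Choose a deterministic path $\pi \subseteq \cube(x')$ from $y'$ to $\edges(x')$ of length at most $3R$ with $\pi \cap \dint\cube(x') = \{y'\}$ (step once inward from $y'$, then along one coordinate axis until a second coordinate reaches the frame region $\{-R,\ldots,-R+2,R-2,\ldots,R\}$). Since all bridge endpoints lie in $\dint\cube(x')$ and none equals $y'$, the path $\pi$ is disjoint from every bridge endpoint. Define
\[
G = \{\mathscr L^\alpha_{\mathrm{in}} = \emptyset\} \cap \bigcap_{i=1}^K \bigl\{W_i \text{ follows a prescribed short route from } a_i \text{ to } b_i \text{ inside } \cube(x') \setminus (\edges(x') \cup \pi)\bigr\},
\]
where each route has length $O(R)$ and touches $\dint\cube(x')$ only at $a_i$ and at the $\dint$-neighbor of $b_i$. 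On $G$ the path $\pi$ is entirely vacant, giving the desired connection, and $G$ implies the good event (on the $\mathcal F$-support where good is achievable). By conditional independence under $\mathcal F$,
\[
\P[G \mid \mathcal F] \geq \exp\!\bigl(-\overline\alpha\,\mu(\{\ell \subseteq \cube(x')\})\bigr) \cdot \prod_{i=1}^K \P[W_i \text{ follows its route} \mid \mathcal F],
\]
and $\gamma_0^K$ is bounded below by a constant provided each bridge factor is bounded below by a uniform $\gamma_0 = \gamma_0(d,R) > 0$.

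The main obstacle is precisely this uniform lower bound on the bridge factor. Writing $\prw^B_{a_i, b_i}[\text{route}] = (2d)^{-\ell}/\prw_{a_i}[X_{H_B} = b_i]$ for a route of length $\ell = O(R)$, one must show that the hitting-probability denominator admits a polynomial-in-$R$ upper bound in the finite box $\cube(x')$; this follows from Lemma~\ref{l:hitting}-type estimates together with the elliptic Harnack principle for discrete harmonic functions on a box of bounded size, yielding $\gamma_0 \geq c(d)^R > 0$. A secondary subtlety is making the prescribed routes simultaneously consistent with the sum-of-local-times constraint of good on $\mathcal F$-realizations where $K$ is close to $R^{d-1}$; I would handle this by using direct one-step routes $\sigma_i = (a_i, b_i)$ whenever $a_i$ is already adjacent to $b_i$ (so the bridge contributes a single $\dint$-visit), and by noting that on good, most bridges must in fact be of this form.
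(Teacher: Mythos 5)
Your strategy is essentially the paper's: decompose the loops meeting $\cube(x')$ into interior and exterior excursions via Proposition~\ref{prop:sampling-loopsoup}, absorb $\Sigma_{\mathcal G}$ and $\mathcal A_{x'}$ into a larger $\sigma$-algebra $\mathcal F$ generated by the exterior loops, the exterior excursions, the boundary data and the good indicator, and then do a surgery forcing every interior bridge onto a prescribed short route avoiding a tunnel from $y'$ to $\edges(x')$, with no loop contained in $\cube(x')$, each prescription costing a factor depending only on $d,R,\overline\alpha$.

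The one step whose justification fails as written is the claim that, conditionally on $\mathcal F$, the interior bridges are independent with laws $\prw^B_{a_i,b_i}$ and independent of $\mathscr L^\alpha_{\mathrm{in}}$. This is false because $\mathcal F$ contains $\mathds{1}_{x'\in\good(\ltloop^\alpha)}$, which (given the boundary data) is a nontrivial function of exactly these objects, so conditioning on its value couples the bridges (for instance it forces them off $\edges(x')$); hence the product lower bound for $\P[G\mid\mathcal F]$ is not ``by conditional independence under $\mathcal F$''. The bound is nevertheless true, and the repair uses precisely the fact you note in passing, $G\subseteq\{x'\in\good(\ltloop^\alpha)\}$: writing $\mathcal F_0$ for $\mathcal F$ without the good indicator, on the good event $\P[G\mid\mathcal F]=\P[G\mid\mathcal F_0]/\P[x'\in\good(\ltloop^\alpha)\mid\mathcal F_0]\geq\P[G\mid\mathcal F_0]$, and the factorization does hold under $\mathcal F_0$. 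This is exactly how the paper organizes the argument: instead of conditioning on goodness it proves the joint lower bound $\P\left[D,\,x'\in\good(\ltloop^\alpha),\,\{\mathcal E^{\alpha,j}_{A,B}\}_{j\geq1}=\mathrm e,\,\text{no loop inside }\cube(x')\right]\geq\gamma\,\P\left[\{\mathcal E^{\alpha,j}_{A,B}\}_{j\geq1}=\mathrm e\right]$ for all boundary data compatible with goodness, using that the surgery itself makes $x'$ good. Two further remarks. Your ``main obstacle'' is a non-issue: $\prw^B_{a_i,b_i}[\text{route}]=(2d)^{-\ell}/\prw_{a_i}[X_{H_B}=b_i]\geq(2d)^{-\ell}\geq c(d,R)$ because the normalizing probability is at most $1$, which is the trivial bound the paper uses; no Harnack input is needed. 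Your bookkeeping with one-step routes for adjacent pairs is sound and indeed careful: compatibility with Definition~\ref{def:good}(2) only gives $N_{\mathrm{adj}}+2N_{\mathrm{nonadj}}\leq R^{d-1}$ (each bridge visits $\dint\cube(x')$ once if it is a single step and at least twice otherwise), and your routes reproduce exactly this count, so goodness is preserved after surgery. You should, however, also record (as the paper does) that compatibility with goodness forces every $a_i\notin\edges(x')$ and the unique cube-neighbour of every $b_i$ to lie outside $\edges(x')$; this, together with a short connectivity argument in $\cube(x')\setminus(\dint\cube(x')\cup\edges(x')\cup\pi)$, is what guarantees that the prescribed routes avoiding $\edges(x')\cup\pi$ exist at all.
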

We postpone the proof of Lemma~\ref{l:surgery} to Section~\ref{sec:surgery} and now complete the proof of Theorem~\ref{thm:lu} using the lemma.

\smallskip

Fix $x\in \ball(0,L_0\lfloor\frac23N\rfloor)$. 
We now define the algorithm for the exploration of the connected component of $x$ in $\mathcal V^\alpha$ 
which progressively reveals $\mathcal V^\alpha$ in boxes $\cube(x')$, $x'\in\GG_0$. 
Assume that the vertices of $\Z^d$ are ordered lexicographically. 
\begin{itemize}
\item
Let $x_0'\in\GG_0$ be the unique vertex such that $x\in\cube(x_0')$ and define $A_0 = \cube(x_0')$. 
(Necessarily, $x_0'\in\bb(0,\lfloor\frac23N\rfloor)$.)
\item
Let $k\geq 0$ and assume that $x_k'$ and $A_k$ are determined. 
We stop the algorithm if 
\[
\text{(a) $x_k'\in \ss(x_0',\lfloor{\textstyle\frac{1}{30}}N\rfloor)$ or (b) $x$ is not connected to $\dint A_k$ in $\mathcal V^\alpha$,}
\]
and define $\tau = k$, $y_l = y_k$, $x_l' = x_k'$, $A_l = A_k$, for all $l>k$. 

Else, we define
\begin{itemize}
\item
$y_{k+1}\in \dint A_k$ as the smallest vertex such that $x$ is connected to $y_{k+1}$ in $\mathcal V^\alpha\cap A_k$,
\item
$x_{k+1}'\in\GG_0\setminus\{x_0',\ldots,x_k'\}$ as the smallest vertex such that $y_{k+1}\in \dext \cube(x_{k+1}')$, 
\item
$A_{k+1} = A_k\cup \cube(x_{k+1}')$.
\end{itemize}
(See Figure~\ref{fig:algorithm} for an illustration.)
\end{itemize}
\begin{figure}[!tp]
\centering
\resizebox{7cm}{!}{\includegraphics{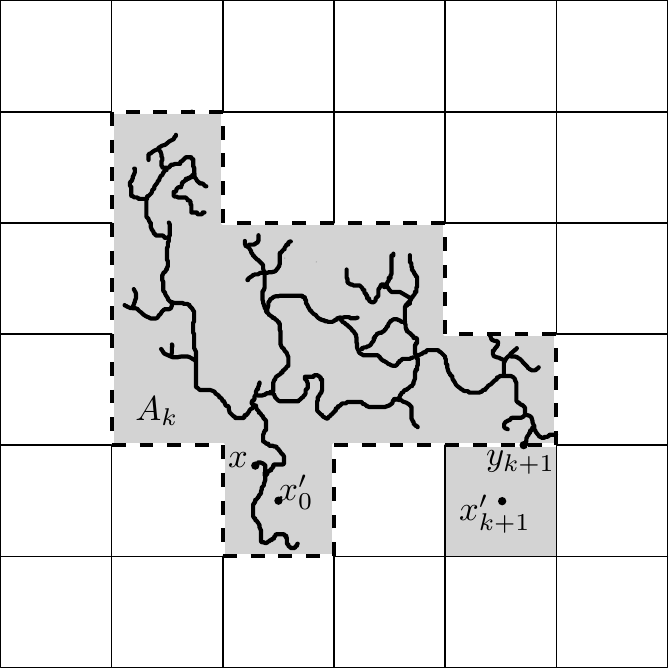}}
\caption{Exploration algorithm.}
\label{fig:algorithm}
\end{figure}
The algorithm always stops in a finite time (which we denote by $\tau$), and if $x$ is connected to 
$\Z^d\setminus \ball(x,L_0\lfloor\frac{1}{25}N\rfloor)$ in $\mathcal V^\alpha$, 
then the algorithm stops exactly on ``reaching'' $\ss(x_0',\lfloor\frac{1}{30}N\rfloor)$.

\smallskip

Consider the sigma-algebras 
\[
\mathcal A_k = \sigma\left(A_k,\,\mathcal V^\alpha\cap A_k\right)\quad \text{and}\quad 
\mathcal Z_k = \sigma\left(\sigma(\mathcal G^\alpha_N), \mathcal A_k\right),\qquad k\geq 0.
\]
Note that the random elements $y_i$, $x_i'$, $A_i$, for $1\leq i\leq k$, are $\mathcal A_{k-1}$-measurable, 
since by revealing the shape of $A_{k-1}$ and the state of $\mathcal V^\alpha$ in $A_{k-1}$, 
one can reconstruct the steps $1,\ldots, k-1$ of the algorithm uniquely and 
also uniquely determine $y_k$, $x_k'$ and $A_k$. 
Same reasoning gives that the event $\{\tau\geq k\}$ belongs to $\mathcal A_{k-1}$.

\smallskip

Consider the events 
\[
E_k = \left\{\tau\geq k,\,x_k'\in\mathcal G^\alpha_N,\,y_k\text{ is connected to }\edges(x_k')\text{ in }\mathcal V^\alpha\cap\left(\{y_k\}\cup\cube(x_k')\right)\right\},\quad k\geq 1.
\]
Then $E_k\in \mathcal Z_k$, $\{\tau\geq k,\,x_k'\in\mathcal G^\alpha_N\}\in\mathcal Z_{k-1}$, and 
\begin{equation}\label{eq:Ek}
\mathbb P\left[E_k\,\Big|\,\mathcal Z_{k-1}\right]\geq
\gamma\,\mathds{1}_{\tau\geq k,\,x_k'\in\mathcal G^\alpha_N},\quad \mathbb P\text{-a.s.,}
\end{equation}
with $\gamma$ as in Lemma~\ref{l:surgery} (for $\overline\alpha=\alpha_1$). Indeed, to see that \eqref{eq:Ek} holds, fix $k\geq 1$ and for any admissible $G$, $A$ and $V$, 
define the event $F(G,A,V) = \{\mathcal G^\alpha_N = G,\,A_{k-1}=A,\,\mathcal V^\alpha\cap A_{k-1} = V\}$. 
Note that if $F(G,A,V)$ occurs, then $x_k'=x'$ and $y_k=y$ for some $x'$ and $y$, which are uniquely determined by $A$ and $V$. Thus, 
\begin{eqnarray*}
\mathbb P\left[E_k,\,F(G,A,V)\right] &= &\mathbb P\left[ E_k,\,F(G,A,V),\,x_k' = x',\,y_k=y\right]\\
&=
&\mathbb E\left[\mathbb P\left[E_k,\,F(G,A,V),\,x_k'=x',\,y_k=y\,\Big|\,\Sigma_\mathcal G,\,\mathcal A_{x'}\right]\right]\\
&=
&\mathbb E\Big[ F(G,A,V),\,x_k'=x',\,y_k=y,\,\tau\geq k,\,x_k'\in\mathcal G^\alpha_N,\\
&&\quad\mathbb P\left[y\text{ is connected to }\edges(x')\text{ in }\mathcal V^\alpha\cap\left(\{y\}\cup\cube(x')\right)\,\Big|\,\Sigma_\mathcal G,\,\mathcal A_{x'}\right]\Big]\\
&\stackrel{\eqref{eq:surgery}}\geq
&\gamma\,\mathbb P\Big[F(G,A,V),\,x_k'=x',\,y_k=y,\,\tau\geq k,\,x_k'\in\mathcal G^\alpha_N\Big]\\
&= &\gamma\,\mathbb P\Big[F(G,A,V),\,\tau\geq k,\,x_k'\in\mathcal G^\alpha_N\Big],\qquad\text{which proves \eqref{eq:Ek}.}
\end{eqnarray*}

\bigskip

We can now complete the proof of \eqref{eq:x-to-SalphaN}. Let 
\begin{align*}
\tau_1 &= \inf\{k\geq 1\,:\,x_k'\in\mathcal G^\alpha_N\}\\
\tau_i &= \inf\{k>\tau_{i-1}\,:\,x_k'\in\mathcal G^\alpha_N\},\,\text{for }i\geq 2.
\end{align*}
Note that $\{\tau_i= k\}\in\mathcal Z_{k-1}$ for all $i$ and $k$. Let $M=\lfloor \sqrt{N}\rfloor -1$. Then, 
the probability on the left hand side of \eqref{eq:x-to-SalphaN} is bounded from above by 
\begin{multline*}
\leq \mathbb P\left[\mathcal G^\alpha_N= \emptyset\right] + \mathbb P\left[\bigcap_{i=1}^M E_{\tau_i}^c,\,\tau_M\leq\tau\right]
= \mathbb P\left[\mathcal G^\alpha_N= \emptyset\right] + \sum_{k=1}^\infty\mathbb P\left[\bigcap_{i=1}^M E_{\tau_i}^c,\,\tau_M = k\leq\tau\right]\\
= \mathbb P\left[\mathcal G^\alpha_N= \emptyset\right] + 
\sum_{k=1}^\infty\mathbb E\left[\bigcap_{i=1}^{M-1} E_{\tau_i}^c,\,\tau_M = k\leq\tau,\,x_k'\in\mathcal G^\alpha_N,\,\mathbb P\left[E_k^c\,\Big|\,\mathcal Z_{k-1}\right]\right]\\
\stackrel{\eqref{eq:Ek}}\leq \mathbb P\left[\mathcal G^\alpha_N= \emptyset\right] + 
(1-\gamma)\,\sum_{k=1}^\infty\mathbb P\left[\bigcap_{i=1}^{M-1} E_{\tau_i}^c,\,\tau_M = k\leq\tau,\,x_k'\in\mathcal G^\alpha_N\right]\\
\leq \mathbb P\left[\mathcal G^\alpha_N= \emptyset\right] + 
(1-\gamma)\,P\left[\bigcap_{i=1}^{M-1} E_{\tau_i}^c,\,\tau_{M-1} \leq\tau\right]\\
\leq \ldots \leq \mathbb P\left[\mathcal G^\alpha_N= \emptyset\right] + (1-\gamma)^M.
\end{multline*}
An application of \eqref{eq:SN:ball} completes the proof of \eqref{eq:x-to-SalphaN} and thus of \eqref{eq:lu:2}, 
subject to Proposition~\ref{prop:bad-*paths} and Lemma~\ref{l:surgery}.
\end{proof}

\subsection{Proof of Proposition~\ref{prop:bad-*paths}}\label{sec:prop:bad-*paths}

The proof uses a multiscale analysis and embedding of dyadic trees.  
Its main idea is similar to the proof of \cite[Theorem~3.2]{Teixeira} about random interlacements, 
although we use embeddings of dyadic trees as in \cite{Sznitman:Decoupling, Rath} instead of skeletons as in \cite{Teixeira}. 
After defining the embeddings and proving some of their relevant properties 
(detailed proofs of various results about such embeddings can be found in \cite{Rath}) 
we prove in Lemma~\ref{l:bad-leaves} that an embedding into the set $\bad(\mathcal N^\alpha)$ of bad vertices is very unlikely. 
Since the connection event in \eqref{eq:bad-*paths} implies that such an embedding must exist (within a not too big class of embeddings), 
it must be very unlikely too. 

\medskip

We proceed with the details. 
Recall that $L_0=2R+1$. 
Let $l\geq 1$ be an integer and consider the sequence of geometrically growing scales $L_n = L_0\,l^n$, $n\geq 0$, 
and respective lattices $\GG_n = L_n\,\Z^d$. 

\smallskip

For $n\geq 0$, we denote by $T_n = \bigcup_{k=0}^n\{1,2\}^k$ the dyadic tree of depth $n$ and write $T_{(k)} = \{1,2\}^k$ for the collection of 
elements of the tree at depth $k$. 
Let $\Lambda_n$ be the set of embeddings $\mathcal T:T_n\to\Z^d$ such that 
\begin{itemize}\itemsep0pt
\item
$\mathcal T(\emptyset) = 0$,
\item
for all $1\leq k\leq n$ and $m\in T_{(k)}$, $\mathcal T(m)\in\GG_{n-k}$,
\item
for all $0\leq k\leq n-1$, $m\in T_{(k)}$ and $i\in\{1,2\}$, 
\begin{equation}\label{eq:Tmi-Tm}
\|\mathcal T(mi) - \mathcal T(m)\| = i\,L_{n-k}.
\end{equation}
(Here $mi\in T_{(k+1)}$ is a descendant of $m$.)
\end{itemize}
\begin{lemma}\label{l:Lambda-n}
For all $n\geq 1$, $L_0\geq 1$, $l\geq 1$, 
\begin{enumerate}[leftmargin=*]
\item
$|\Lambda_n| \leq \left((2d\,(2l+1)^{d-1})\,(2d\,(4l+1)^{d-1})\right)^{2^n-1} \leq \left((2d)^2\,(4l)^{2(d-1)}\right)^{2^n-1}$,
\item
for all $\mathcal T\in \Lambda_n$, $k\geq 0$ and $m\in T_{(n)}$, 
\[
\left|\left\{m'\in T_{(n)}~:~\|\mathcal T(m') - \mathcal T(m)\|\leq \frac{l-5}{l-1}\, L_{k+1}\right\}\right|\leq 2^k.
\]
\end{enumerate}
\end{lemma}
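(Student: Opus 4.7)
My plan is a straightforward top-down count. Having fixed $\mathcal T(\emptyset)=0$, for each non-leaf $m\in T_{(k)}$ with $0\leq k<n$ the image $\mathcal T(m)$ already chosen, the two children must lie in $\GG_{n-k-1}=L_{n-k-1}\Z^d$ at $\ell_\infty$-distances $L_{n-k}$ and $2L_{n-k}$ from $\mathcal T(m)$. Since $L_{n-k}/L_{n-k-1}=l$, the $\ell_\infty$-sphere of radius $iL_{n-k}$ in $\GG_{n-k-1}$ is in bijection with the $\ell_\infty$-sphere of radius $il$ in $\Z^d$, which has at most $(2il+1)^d-(2il-1)^d\leq 2d(2il+1)^{d-1}$ points. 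Multiplying the bounds for $i=1$ and $i=2$ at each of the $|T_n|-|T_{(n)}|=2^n-1$ non-leaves gives the first inequality, and the crude simplification $(2l+1)(4l+1)\leq 16l^2$ for $l\geq 1$ yields the second.

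\textbf{Part (2).} The plan is a distance lower bound via the least common ancestor (LCA). Take distinct $m,m'\in T_{(n)}$ whose LCA sits at depth $j\leq n-1$, and let $c_m$ and $c_{m'}$ be the two distinct children of the LCA on the paths to $m$ and $m'$. By \eqref{eq:Tmi-Tm}, one of them is at $\ell_\infty$-distance $L_{n-j}$ from the LCA and the other at $2L_{n-j}$, so the reverse triangle inequality forces $\|c_m-c_{m'}\|\geq L_{n-j}$. The remaining $n-j-1$ steps down to each leaf will contribute a total $\ell_\infty$-displacement at most $\sum_{i=1}^{n-j-1}2L_i=\tfrac{2(L_{n-j}-L_1)}{l-1}$, which is \emph{strictly} less than $\tfrac{2L_{n-j}}{l-1}$. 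Triangle inequality then yields
\[
\|\mathcal T(m)-\mathcal T(m')\|\,\geq\, L_{n-j}-2\cdot\tfrac{2(L_{n-j}-L_1)}{l-1}\,>\,\tfrac{l-5}{l-1}L_{n-j}.
\]
Consequently, if $\|\mathcal T(m)-\mathcal T(m')\|\leq\tfrac{l-5}{l-1}L_{k+1}$ then $L_{n-j}<L_{k+1}$, forcing $j\geq n-k$. Every such $m'$ therefore lies in the subtree rooted at the unique ancestor of $m$ at depth $n-k$, which has exactly $2^k$ leaves (and $m$ itself is one of them), proving the claim. The cases $l\leq 5$ are trivial because then $\tfrac{l-5}{l-1}\leq 0$, so only $m'=m$ can satisfy the hypothesis.

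\textbf{Main obstacle.} The only delicate point I anticipate is obtaining the strict inequality $>\tfrac{l-5}{l-1}L_{n-j}$ rather than the weaker $\geq$. Were we to settle for the weak bound, the conclusion would only give $j\geq n-k-1$ and hence the weaker count $2^{k+1}$, which does not match the statement. The strict separation is recovered because the finite geometric tail $\sum_{i=1}^{n-j-1}L_i$ misses its infinite-tail upper bound $\tfrac{L_{n-j}}{l-1}$ by the positive amount $\tfrac{L_1}{l-1}$; propagating this slack through the triangle inequality is what rules out the borderline case $j=n-k-1$.
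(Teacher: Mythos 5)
Your proof is correct and takes essentially the same route as the paper's: part (1) is the same per-node counting of admissible child positions (the paper merely phrases it as an induction on $n$), and part (2) is the paper's argument that the separation of at least $L_{k+1}$ acquired at the first differing digit strictly exceeds the total subsequent displacement $2\sum_{i\le k}2L_i<\frac{4l}{l-1}L_k$, so two leaves within distance $\frac{l-5}{l-1}L_{k+1}$ must agree in all but their last $k$ digits. Your LCA phrasing, the exact geometric-sum bookkeeping giving the strict inequality, and the explicit remark on $l\le 5$ are only cosmetic differences from the paper's proof.
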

\begin{proof}[Proof of Lemma~\ref{l:Lambda-n}]
Statement 1 follows easily by induction on $n$. 

For Statement 2, it suffices to consider $0\leq k\leq n-1$ and $l\geq 6$. Take $a\in T_{(n-k-1)}$ and $b',b''\in\{1,2\}^k$. Then for the elements $a1b',a2b''\in T_{(n)}$, 
\begin{multline*}
\left\|\mathcal T(a1b') - \mathcal T(a2b'')\right\|\\[4pt]
\geq \left\|\mathcal T(a1) - \mathcal T(a2)\right\| - \left\|\mathcal T(a1b') - \mathcal T(a1)\right\| - \left\|\mathcal T(a2b'') - \mathcal T(a2)\right\|\\[4pt]
\stackrel{\eqref{eq:Tmi-Tm}}\geq L_{k+1} - 2\,\left(2L_{k} + 2L_{k-1} + \ldots + 2L_0\right) 
> L_{k+1} - 4L_k\,\frac{l}{l-1} = \frac{l-5}{l-1}\,L_{k+1}.
\end{multline*}
Thus, any $m,m'\in T_{(n)}$ with $\|\mathcal T(m') - \mathcal T(m)\|\leq\frac{l-5}{l-1}\,L_{k+1}$ 
can only differ in the last $k$ digits, i.e., there exist $a\in T_{(n-k)}$, $b,b'\in\{1,2\}^k$ such that $m=ab$ and $m'=ab'$. 
Since for any $m$ there are at most $2^k$ such $m'$, the result follows. 
\end{proof}

\medskip

For $x'\in\GG_0$, define
\[
C_{x'} = \dint \ball(x',L_0),\quad D_{x'} = \dint \ball(x',{\textstyle\frac{1}{4}}L_1)
\]
and for $\mathcal T\in \Lambda_n$, consider 
\[
C_{\mathcal T} = \bigcup_{x'\in \mathcal T(T_{(n)})}\,C_{x'},\quad 
D_{\mathcal T} = \bigcup_{x'\in \mathcal T(T_{(n)})}\,D_{x'}.
\]
By Lemma~\ref{l:Lambda-n}, if $l\geq 10$, then the sets $D_{x'}$ in the above union are pairwise disjoint.
\begin{lemma}\label{l:D-C-hitting}
There exists $C_{\ref{l:D-C-hitting}}=C_{\ref{l:D-C-hitting}}(d)$ such that for all $n\geq 1$, $\mathcal T\in\Lambda_n$ and $l\geq C_{\ref{l:D-C-hitting}}$, 
\[
\sup_{y\in D_{\mathcal T}}\,\prw_y\left[H_{C_{\mathcal T}}<\infty\right]\leq \frac{1}{2e}.
\]
\end{lemma}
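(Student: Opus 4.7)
Fix $y \in D_{\mathcal T}$, so $y \in D_{x_0'}$ for some $m_0 \in T_{(n)}$ with $x_0' = \mathcal T(m_0)$ and $\|y - x_0'\| = \tfrac{1}{4} L_1$. By the union bound,
\[
\prw_y\!\left[H_{C_{\mathcal T}} < \infty\right] \;\leq\; \sum_{m' \in T_{(n)}} \prw_y\!\left[H_{C_{\mathcal T(m')}} < \infty\right].
\]
Since $C_{x'} = \dint \ball(x', L_0)$ is entered if and only if $\ball(x', L_0)$ is entered by a walker starting outside it, we intend to apply \eqref{eq:hitting1} to each term, with the ball $\ball(\mathcal T(m'), L_0)$. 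The main work is to check that $y$ lies comfortably outside this ball (so \eqref{eq:hitting1} applies) and to organise the sum.

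The diagonal term $m' = m_0$ is straightforward: $\|y - x_0'\| = L_1/4 = l L_0/4 \geq 2 L_0$ for $l \geq 8$, so \eqref{eq:hitting1} gives
\[
\prw_y\!\left[H_{C_{x_0'}} < \infty\right] \;\leq\; C \left(\frac{L_0}{L_1/4}\right)^{d-2} \;=\; C \left(\frac{4}{l}\right)^{d-2}.
\]
For $m' \neq m_0$, the plan is to group $m'$ according to the depth of its lowest common ancestor with $m_0$ and apply Lemma~\ref{l:Lambda-n}(2). Concretely, for $k \geq 1$, let
\[
A_k \;=\; \left\{\, m' \in T_{(n)} : \tfrac{l-5}{l-1} L_k \;<\; \|\mathcal T(m') - x_0'\| \;\leq\; \tfrac{l-5}{l-1} L_{k+1}\,\right\}.
\]
By Lemma~\ref{l:Lambda-n}(2), $|A_k| \leq 2^k - 2^{k-1} = 2^{k-1}$ (the $k=0$ case contains only $m_0$ itself, so contributes nothing beyond the diagonal term). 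For $l$ large enough (say $\tfrac{l-5}{l-1} \geq \tfrac{3}{4}$ and $l \geq 8$), every $m' \in A_k$ with $k \geq 1$ satisfies
\[
\|y - \mathcal T(m')\| \;\geq\; \tfrac{l-5}{l-1} L_k - \tfrac{1}{4} L_1 \;\geq\; \tfrac{1}{2} L_k,
\]
so $y \notin \ball(\mathcal T(m'), L_0)$ and \eqref{eq:hitting1} yields
\[
\prw_y\!\left[H_{C_{\mathcal T(m')}} < \infty\right] \;\leq\; C \left(\frac{2 L_0}{L_k}\right)^{d-2} \;=\; C' \, l^{-k(d-2)}.
\]

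Assembling the two estimates gives
\[
\prw_y\!\left[H_{C_{\mathcal T}} < \infty\right] \;\leq\; C \left(\frac{4}{l}\right)^{d-2} + \sum_{k \geq 1} 2^{k-1} \cdot C' \, l^{-k(d-2)} \;\leq\; C'' \, l^{-(d-2)} \sum_{k \geq 0} \bigl(2 \, l^{-(d-2)}\bigr)^{k},
\]
and the geometric series converges to a bounded quantity as soon as $2 l^{-(d-2)} \leq 1/2$. Choosing $C_{\ref{l:D-C-hitting}} = C_{\ref{l:D-C-hitting}}(d)$ large enough so that the resulting bound is at most $1/(2e)$ completes the proof.

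The main point to be careful about is aligning the distances coming from Lemma~\ref{l:Lambda-n}(2) with the hypothesis $y \notin \ball(\mathcal T(m'), L_0)$ needed to apply \eqref{eq:hitting1}; everything else is bookkeeping, and the estimate is comfortable because the number of vertices at a given depth scale grows like $2^k$ while the hitting probabilities decay like $l^{-k(d-2)}$, so $l$ large beats the tree branching for any $d \geq 3$.
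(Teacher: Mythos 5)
Your argument is correct and is essentially the paper's proof: a union bound over the leaf images grouped into dyadic annuli, the cardinality bound of Lemma~\ref{l:Lambda-n}(2), the hitting estimate \eqref{eq:hitting1}, and a geometric series in $l$ (the paper centers its annuli at $y$ and observes the innermost annulus is empty, while you center them at $x_0'$ and treat the diagonal term separately, which is only a cosmetic difference). One small slip: Lemma~\ref{l:Lambda-n}(2) gives only an upper bound on the number of leaves inside the inner radius, so you cannot deduce $|A_k|\leq 2^k-2^{k-1}$; the justified bound is $|A_k|\leq 2^k$, which still makes the series converge and leaves the conclusion unchanged.
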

\begin{proof}[Proof of Lemma~\ref{l:D-C-hitting}]
Let $n\geq 1$, $\mathcal T\in\Lambda_n$ and $y\in D_{\mathcal T}$.

Denote by $S_k$ the set of all $x'\in \mathcal T(T_{(n)})$ with $\frac{1}{5}L_k\leq \|x'-y\|\leq \frac15 L_{k+1}$. 
By Lemma~\ref{l:Lambda-n}(2), if $l\geq 10$, then $|S_k|\leq 2^k$. Also, $S_0=\emptyset$. Using \eqref{eq:hitting1}, we get 
\[
\prw_y\left[H_{C_{\mathcal T}}<\infty\right] 
\leq \sum_{k=1}^\infty\sum_{x'\in S_k}\prw_y\left[H_{C_{x'}}<\infty\right] 
\leq \sum_{k=1}^\infty\,|S_k|\,C\,L_0^{d-2}\,L_k^{2-d}
\leq C\,\sum_{k=1}^\infty (2l^{2-d})^k \leq \frac{1}{2e},
\]
for all $l$ sufficiently large.
\end{proof}

\smallskip

The next lemma is the main ingredient for the proof of Proposition~\ref{prop:bad-*paths}. 
\begin{lemma}\label{l:bad-leaves}
Let $d\geq 3$. 
For any $K\geq 1$, there exist $R_0=R_0(K)$ and $\alpha_0 = \alpha_0(K,R)>0$ such that 
for all $R\geq R_0$, $\alpha\leq\alpha_0$, $l\geq C_{\ref{l:D-C-hitting}}$, $n\geq 1$ and $\mathcal T\in\Lambda_n$,
\[
\P\left[\mathcal T(T_{(n)})\subseteq\bad(\mathcal N^\alpha)\right]\leq \exp\left(-K\,2^n\right).
\]
\end{lemma}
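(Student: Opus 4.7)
The plan is to split the loop soup into local and global loops relative to the tree embedding $\mathcal T$, then control the local part by independence and the global part by the excursion decomposition of Proposition~\ref{prop:sampling-loopsoup} and the tail bound of Lemma~\ref{l:excursions-AB-num}.

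Call a loop \emph{local at $x'$} if it is contained in $\ball(x', L_1/4)$ and visits $\cube(x')$, and \emph{global} if it lies in $\mathfrak L_{C_{\mathcal T}, D_{\mathcal T}}$. By the same reasoning as Lemma~\ref{l:Lambda-n}(2), for $l\geq C_{\ref{l:D-C-hitting}}$ sufficiently large the balls $\ball(x', L_1/4)$, $x'\in\mathcal T(T_{(n)})$, are pairwise disjoint, so a loop visiting some $\cube(x')$ is either local at that unique $x'$ or global. Consequently, the restrictions of $\mathscr L^\alpha$ to local-at-$x'$ loops form independent Poisson point processes, independent of the global loops. For each leaf, decompose $\{x'\in\bad(\mathcal N^\alpha)\}\subseteq A_{x'}\cup B_{x'}$, where $A_{x'}$ says the local loops at $x'$ already make $x'$ bad and $B_{x'}$ says the global loops' contribution does. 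By the conditional independence of $(A_{x'})_{x'}$ given the globals and their mutual independence,
\[
\P\bigl(\mathcal T(T_{(n)})\subseteq\bad\bigr)
\,\leq\,
\mathbb E\bigl[p^{2^n-|\mathcal I|}\bigr],
\qquad p=\sup_{x'}\P(A_{x'}),\quad \mathcal I=\{x':B_{x'}\}.
\]

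First I would estimate $p$. A union bound over $\edges(x')$ (which has $O(R^{d-2})$ vertices) of the Poisson probability $\P(\mathcal N^\alpha(v)>0)\leq C\alpha g(v,v)=O(\alpha)$, together with an exponential Chebyshev bound on the Poisson local time sum $\sum_{y\in\dint\cube(x')}\mathcal N^\alpha(y)$ restricted to local-at-$x'$ loops, yields $p\leq C(R)\alpha$. This lets us force $p\leq e^{-2K}$ by taking $\alpha$ small depending on $K,R$.

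Next I would control $|\mathcal I|$ via the excursion representation. By Lemma~\ref{l:D-C-hitting} the hypothesis of Lemma~\ref{l:excursions-AB-num} holds with $A=D_{\mathcal T}$ and $B=C_{\mathcal T}$, so the total number $\mathcal Z$ of global excursions satisfies $\P(\mathcal Z\geq k)\leq e^{\alpha-k}$. Any $x'\in\mathcal I$ requires a global loop to visit $\cube(x')$, which forces the bridge to enter $\ball(x',L_0)$ through $C_{x'}$; by Proposition~\ref{prop:sampling-loopsoup}, given the endpoints, the excursions are independent random walk bridges, and for each such bridge the probability of making its associated leaf bad decays with $R$ (the set $\edges(x')$ has relative size $O(R^{-2})$ inside $\cube(x')$, and the threshold $R^{d-1}$ for the local time sum can only be met with asymptotically many returns to the ball, each suppressed by a Green-function factor). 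This conditional thinning gives $|\mathcal I|$ an exponential moment bound uniformly in $n$, with rate that becomes arbitrarily large as $R\to\infty$.

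Combining, I would choose $R\geq R_0(K)$ large enough that $\mathbb E[p^{-|\mathcal I|}]\leq e^{K\cdot 2^{n-1}}$, which is possible because the exponential moment of $|\mathcal I|$ can be made $\leq 1+C(R)\alpha$ with $C(R)$ controlled, and then $\alpha\leq \alpha_0(K,R)$ small so that $p^{2^n}\leq e^{-2K\cdot 2^n}$; this yields $\P(\mathcal T(T_{(n)})\subseteq\bad)\leq e^{-K\cdot 2^n}$ as required. The main obstacle is the last step, showing that the conditional ``per-excursion bad-making'' probability decays quantitatively enough with $R$ to beat the exponential growth $p^{-|\mathcal I|}$ for arbitrary $K$; this is where the specific structure of $\edges(x')$ in Definition~\ref{def:edges} and the threshold $R^{d-1}$ in Definition~\ref{def:good} enter, and where the independence of bridges given endpoints (Proposition~\ref{prop:sampling-loopsoup}(2)) is essential for turning per-excursion estimates into product bounds.
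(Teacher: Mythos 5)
Your decomposition (loops confined near a single leaf versus loops of $\mathfrak L_{C_{\mathcal T},D_{\mathcal T}}$), the use of Lemma~\ref{l:D-C-hitting} to verify \eqref{eq:excursions-AB-num-assumption}, of Lemma~\ref{l:excursions-AB-num} for the excursion count, of Proposition~\ref{prop:sampling-loopsoup} for conditional independence of the bridges, and the two-parameter tuning (small $\alpha$ for the purely local part, large $R$ for the bridges via \eqref{eq:capaedges} and the threshold in Definition~\ref{def:good}) are exactly the ingredients of the paper's proof. The genuine gap is in your final combination. You bound $\P[\mathcal T(T_{(n)})\subseteq\bad(\mathcal N^\alpha)]\le p^{2^n}\,\mathbb E[p^{-|\mathcal I|}]$ and propose to choose $R$ first so that $\mathbb E[p^{-|\mathcal I|}]\le e^{K2^{n-1}}$, and then $\alpha$ small so that $p^{2^n}\le e^{-2K2^n}$. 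This order of choices is circular: $p\asymp c(R)\alpha$ depends on $\alpha$, and $\mathbb E[p^{-|\mathcal I|}]$ is an exponential moment of $|\mathcal I|$ at rate $\log(1/p)$, which diverges as $\alpha\downarrow 0$ with $R$ fixed. Concretely, already $\mathbb E[p^{-|\mathcal I|}]\ge p^{-2}\,\P[|\mathcal I|\ge 2]\ge c(R,l)\,\alpha^{-1}$, because a single macroscopic loop hitting $\edges(x_1')$ and $\edges(x_2')$ for two leaves costs only one factor of $\alpha$, while each unit of $|\mathcal I|$ is weighted by $p^{-1}\asymp (c(R)\alpha)^{-1}$. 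Structurally, the tail rate of $|\mathcal I|$ is capped independently of $\alpha$: the rate in Lemma~\ref{l:excursions-AB-num} is a fixed constant, and the per-excursion probability of spoiling its leaf decays only like $1/\log R$ (this comes from the capacity bound \eqref{eq:capaedges}; the volume heuristic $O(R^{-2})$ is not the relevant mechanism), so your (correct) claim that $|\mathcal I|$ has exponential moments with rate growing in $R$ only gives rates of order $\log\log R$, never rate $\log(1/p)$ once $\alpha$ is small. As written, the two requirements fight each other and the bound $e^{-K2^n}$ does not follow.

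The repair is to use your thinning estimate at a fixed rate dictated by $K$, not by $p$: split on $\{|\mathcal I|\ge 2^{n-1}\}$. On its complement at least $2^{n-1}$ leaves must be spoiled by their local loops alone, contributing $p^{2^{n-1}}\le\frac12 e^{-K2^n}$ once $\alpha\le\alpha_0(K,R)$; and $\P[|\mathcal I|\ge 2^{n-1}]$ is bounded either by $e^{-\lambda 2^{n-1}}\mathbb E[e^{\lambda|\mathcal I|}]$ with the fixed rate $\lambda=4K$ (available once $\log\log R\ge CK$), or, as the paper does, deterministically: by Lemmas~\ref{l:D-C-hitting} and \ref{l:excursions-AB-num} the total number of excursions exceeds $(K+2)2^n$ with probability at most $\frac12 e^{-K2^n}$, and otherwise at least $2^{n-1}$ leaves receive at most $2(K+2)$ excursions each; a union bound over the at most $2^{2^n}$ choices of these leaves, combined with the per-leaf estimate (no loop avoiding $D_{x'}$ touches $\cube(x')$, cost $C(R)\alpha$; at most $2(K+2)$ bridges hit $\edges(x')$ or overload $\dint\cube(x')$, cost $C(K)(1/\log R+R^{2-d})$), closes the argument. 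Two smaller corrections: your class of loops ``local at $x'$'' overlaps the global class (a loop inside $\ball(x',L_1/4)$ may meet both $C_{x'}$ and $D_{x'}$), so the asserted independence fails literally; define local loops as those meeting $\cube(x')$ but not $D_{x'}$, as the paper does. Also the inclusion $\{x'\in\bad(\mathcal N^\alpha)\}\subseteq A_{x'}\cup B_{x'}$ needs the threshold $R^{d-1}$ split between the two contributions (or $A_{x'}$ replaced by the event that some local loop touches $\cube(x')$ at all), since the local and global occupation times can exceed the threshold jointly without either doing so alone.
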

\begin{proof}[Proof of Lemma~\ref{l:bad-leaves}]
Let $n\geq 1$ and $\mathcal T\in\Lambda_n$. Take $l\geq C_{\ref{l:D-C-hitting}}$, $\alpha\leq 1$ and $M=K+2$. 

\smallskip

Recall that for two disjoint sets $A,B$, $\nume^\alpha_{A,B}$ denotes the number of excursions of all loops from $\mathscr L^\alpha$ from $A$ to $B$. 
Then, 
\begin{multline}\label{eq:bad-leaves-main}
\P\left[\mathcal T(T_{(n)})\subseteq\bad(\mathcal N^\alpha)\right]\\[5pt]
\leq 
\P\left[\nume^\alpha_{C_{\mathcal T},D_{\mathcal T}}\geq M\,2^n\right] + 
\P\left[\nume^\alpha_{C_{\mathcal T},D_{\mathcal T}}\leq M\,2^n,\, \mathcal T(T_{(n)})\subset\bad(\mathcal N^\alpha)\right].
\end{multline}
By the choice of $l$, Lemma~\ref{l:D-C-hitting} and Lemma~\ref{l:excursions-AB-num}, 
\begin{equation}\label{eq:bad-leaves-1}
\P\left[\nume^\alpha_{C_{\mathcal T},D_{\mathcal T}}\geq M\,2^n\right]\leq \exp\left(\alpha - M2^n\right)\leq \frac12\,\exp\left(-K\,2^n\right),
\end{equation}
where in the second inequality we used $\alpha\leq 1$ and $M=K+2$.

To bound the second term in \eqref{eq:bad-leaves-main}, recall that by the choice of $l$, the sets $D_{x'}$, $x'\in\mathcal T(T_{(n)})$, are pairwise disjoint. 
Thus, 
\[
\nume^\alpha_{C_{\mathcal T},D_{\mathcal T}} = \sum_{x\in\mathcal T(T_{(n)})}\nume^\alpha_{C_{x'},D_{x'}}\,.
\]
In particular, if $\nume^\alpha_{C_{\mathcal T},D_{\mathcal T}}\leq M\,2^n$, then there exists a subset $S$ of $\mathcal T(T_{(n)})$ with cardinality $2^{n-1}$ such that 
$\nume^\alpha_{C_{x'},D_{x'}} \leq 2M$ \emph{for all} $x'\in S$. 
As the number of possible subsets of $\mathcal T(T_{(n)})$ with cardinality $2^{n-1}$ is at most $2^{2^n}$, we obtain that
\begin{multline*}
\P\left[\nume^\alpha_{C_{\mathcal T},D_{\mathcal T}}\leq M\,2^n,\, \mathcal T(T_{(n)})\subset\bad(\mathcal N^\alpha)\right]\\[5pt]
\leq 2^{2^n}\,\sup_S 
\P\left[\nume^\alpha_{C_{x'},D_{x'}}\leq 2M\text{ and }x'\in\bad(\mathcal N^\alpha)\text{ for all $x'\in S$}\right],
\end{multline*}
where the supremum is over all subsets $S$ of $\mathcal T(T_{(n)})$ with cardinality $2^{n-1}$.

\smallskip

The event that $x'$ is $R$-bad only depends on the restriction of $\mathcal N^\alpha$ to $\cube(x')$. 
Thus, if we denote by $\mathcal N^\alpha_{x'}$ the total local time of all loops from $\mathscr L^\alpha$ that intersect $\cube(x')$ but not $D_{x'}$, 
then for all $z\in \cube(x')$, $\mathcal N^\alpha(z)$ is the sum of $\mathcal N^\alpha_{x'}(z)$ and 
the total number of visits to $z$ of all the excursions of $\mathscr L^\alpha$ from $C_{x'}$ to $D_{x'}$. 
Note that 
\begin{itemize}\itemsep0pt
\item
$\mathcal N^\alpha_{x'}$, $x'\in S$, are independent,
\item
the excursions of $\mathscr L^\alpha$ from $C_{x'}$ to $D_{x'}$, conditioned on their starting and ending locations, 
are distributed as independent random walk bridges (see Proposition~\ref{prop:sampling-loopsoup}),
\item
the event that $x'$ is $R$-bad for $\lt:\Z^d\to\N$ is increasing in $\lt$.
\end{itemize}
Thus, if we denote by $\mathcal N'$ the total local time of $2M$ random walk excursions from $C_0$ to $D_0$, then 
\begin{multline*}
\P\left[\nume^\alpha_{C_{x'},D_{x'}}\leq 2M\text{ and }x'\in\bad(\mathcal N^\alpha)\text{ for all $x'\in S$}\right]\\
\leq
\left(\max_{(y_i,z_i)_{i=1}^{2M}}\,\P\otimes\bigotimes_{i=1}^{2M}\prw_{y_i,z_i}^{D_0}\left[\text{$0$ is $R$-bad for $\left(\mathcal N^\alpha + \mathcal N'\right)$}\right]\right)^{2^{n-1}}\\
\leq 
\left(\P\left[\sum_{z\in Q(0)}\mathcal N^\alpha(z)\geq 1\right] + 
\max_{(y_i,z_i)_{i=1}^{2M}}\,\bigotimes_{i=1}^{2M}\prw_{y_i,z_i}^{D_0}\left[\text{$0$ is $R$-bad for $\mathcal N'$}\right]\right)^{2^{n-1}},
\end{multline*}
where the maximum is over all $2M$-tuples of pairs $(y_i,z_i)\in C_0\times D_0$---the starting and ending locations of excursions from $C_0$ to $D_0$. 

\smallskip

It remains to prove that for a suitable choice of $\alpha$ and $R$, 
\begin{equation}\label{eq:bad-leaves-2}
\P\left[\sum_{z\in Q(0)}\mathcal N^\alpha(z)\geq 1\right] \leq \frac{1}{16}\,\exp\left(-2K\right)
\end{equation}
and 
\begin{equation}\label{eq:bad-leaves-3}
\max_{(y_i,z_i)_{i=1}^{2M}}\,\bigotimes_{i=1}^{2M}\prw_{y_i,z_i}^{D_0}\left[\text{$0$ is $R$-bad for $\mathcal N'$}\right]\leq \frac{1}{16}\,\exp\left(-2K\right).
\end{equation}
Indeed, if \eqref{eq:bad-leaves-2} and \eqref{eq:bad-leaves-3} hold, then the second summand in \eqref{eq:bad-leaves-main} is bounded from above by 
\[
2^{2^n}\,\left(\frac{1}{8}\,\exp\left(-2K\right)\right)^{2^{n-1}}\leq \frac12\,\exp\left(-K2^n\right)
\]
and, combined with \eqref{eq:bad-leaves-1}, this gives the result. 

\smallskip

We begin with \eqref{eq:bad-leaves-3}. Let $(y_i,z_i)_{i=1}^{2M}$ be the $2M$-tuple for which the maximum is attained. 
By the definition of $R$-bad vertex, 
the probability in \eqref{eq:bad-leaves-3} is bounded from above by 
\begin{equation}\label{eq:Rd-1}
\sum_{i=1}^{2M}\prw_{y_i,z_i}^{D_0}\left[H_{\edges}<\infty\right] 
+ \sum_{i=1}^{2M}\prw_{y_i,z_i}^{D_0}\left[\sum_n\sum_{x\in\dint\cube(0)}\mathds{1}_{X_n = x}> \frac{1}{2M}R^{d-1}\right],
\end{equation}
which can be estimated using standard results for random walks and the fact that for any $d\geq 3$, there exists $C<\infty$ such that
\begin{equation}\label{eq:capaedges}
\capa(\edges) \leq \frac{C\, R^{d-2}}{\log R},\qquad R\geq 2\,,\quad (\text{cf. \cite[Lemma~3.2]{DRS-AIHP}}).  
\end{equation}
Indeed, by \eqref{eq:capaedges}, \eqref{eq:H-g-e}, \eqref{eq:GF} and the Harnack principle, the first sum is bounded from above by $\frac{CM}{\log R}$. 
By the Markov inequality, \eqref{eq:GF} and the Harnack principle, the second sum is bounded from above by $CM^2R^{2-d}$. 
Thus, if $R\geq R_0=R_0(K)$, then \eqref{eq:bad-leaves-3} holds. 

\smallskip

It remains to show that for $\alpha\leq \alpha_0 = \alpha_0(K,R)$, \eqref{eq:bad-leaves-2} holds, but this is immediate, 
since by properties of $\mathscr L^\alpha$, the probability in \eqref{eq:bad-leaves-2} is bounded from above by $CR^d\alpha$. 
\end{proof}

\bigskip

\begin{proof}[Proof of Proposition~\ref{prop:bad-*paths}]
First note that it suffices to prove that for some $R\geq 1$, $l\geq 1$ and $\alpha>0$, 
\begin{equation}\label{eq:bad-*paths-1}
\P\left[
\text{$\ball(0,L_n)$ is $*$-connected to $\dint \ball(0,2L_n)$ in $\bad(\ltloop^\alpha)$}
\right]
\leq 2^{-2^n} 
\end{equation}
for all $n\geq 1$. Indeed, let $N\geq 1$ and choose $n$ so that $2L_n\leq L_0N\leq 2L_{n+1}$. 
Then, the event in \eqref{eq:bad-*paths} implies the event in \eqref{eq:bad-*paths-1} and $N\leq \frac{2L_{n+1}}{L_0}=2l^{n+1}\leq 2^{Cn}$ for some $C=C(l)$.

\smallskip

Claim \eqref{eq:bad-*paths-1} easily follows from Lemma~\ref{l:bad-leaves} and the observation that 
the event in \eqref{eq:bad-*paths-1} implies the existence of an embedding $\mathcal T\in \Lambda_n$ 
such that the images of all leaves $T_{(n)}$ are $R$-bad for $\ltloop^\alpha$ 
(see, e.g., \cite[(3.24)]{Sznitman:Decoupling} or \cite[Lemma~3.3]{Rath}). Namely, 
\begin{multline*}
\P\left[
\text{$\ball(0,L_n)$ is $*$-connected to $\dint \ball(0,2L_n)$ in $\bad(\ltloop^\alpha)$}
\right]\\[5pt]
\leq 
\P\left[\text{there exists $\mathcal T\in\Lambda_n$ such that $\mathcal T(T_{(n)})\subset\bad(\ltloop^\alpha)$}\right]\\[5pt]
\stackrel{\mathrm{L.}\ref{l:Lambda-n}(1)}\leq 
\left((2d)^2\,(4l)^{2(d-1)}\right)^{2^n-1}\,
\sup_{\mathcal T\in \Lambda_n} \P\left[\mathcal T(T_{(n)})\subset\bad(\ltloop^\alpha)\right].
\end{multline*}
Let $l\geq C_{\ref{l:D-C-hitting}}$ and choose $K=K(l)$ so that 
\[
\left((2d)^2\,(4l)^{2(d-1)}\right)^{2^n-1}\,\exp\left(-K2^n\right)\leq 2^{-2^n}.
\]
Finally, choose $R=R_0(K)$ and $\alpha = \alpha_0(R,K)>0$ as in Lemma~\ref{l:bad-leaves}. 
Then, by Lemma~\ref{l:bad-leaves}, 
\[
\sup_{\mathcal T\in \Lambda_n} \P\left[\mathcal T(T_{(n)})\subset\bad(\ltloop^\alpha)\right]\leq \exp\left(-K2^n\right),
\]
and \eqref{eq:bad-*paths-1} follows for this choice of $l$, $R$ and $\alpha$. 
\end{proof}

\subsection{Proof of Lemma~\ref{l:surgery}}\label{sec:surgery}

We begin with an outline of the proof. For $x'\in \GG_0$, we decompose all the loops from the loop soup $\mathscr L^\alpha$ that visit $A=\dint\cube (x')$ and $B=\dext\cube(x')$ 
into inner (from $A$ to $B$) and outer (from $B$ to $A$) excursions. 
By Proposition~\ref{prop:sampling-loopsoup}, given their starting and ending locations, the inner and outer excursions are independent random walk bridges. 
In view of independence, the conditional probability in \eqref{eq:surgery} with respect to the $\sigma$-algebras generated by all good boxes and all the vacant set in the complement of $\cube(x')$
can be substituted by the conditional probability with respect to only the starting and ending locations of the inner excursions and the event that $x'$ is good, cf.\ \eqref{eq:surgery:1} and \eqref{eq:surgery:2}. 
Now, by Definition~\ref{def:good}(2) of the good box (see also Remark~\ref{rem:good}) the total number of inner excursions is bounded from above by $R^{d-1}$. 
Since all of them are distributed as independent random walk bridges, one can prescribe their values as simple paths inside of $\cube(x')$ in such a way that 
a given point $y\in\dext \cube(x')$ is connected to $\edges(x')$ by a nearest neighbor path in $\cube(x')$ which is avoided by all the bridges, see \eqref{eq:rho} and below. 
Since the number of bridges is bounded and each is realized as a simple path in $\cube(x')$, the price of such a local surgery is uniformly positive. 
Furthermore, with positive probability there are no loops of $\mathscr L^\alpha$ that are entirely contained in $\cube(x')$, thus the constructed nearest neighbor path from $y$ to $\edges(x')$ 
in $\cube(x')$ is in fact a path in the vacant set $\mathcal V^\alpha$.
Finally, such a surgery keeps $x'$ good.

\medskip

We proceed with the details of the proof. Let $x'\in \GG_0$ and $y\in\dext \cube(x')$. 
Define 
\[
A=\dint\cube (x'),\quad B=\dext\cube(x'),
\]
and recall from \eqref{def:mathcalE} the definition of Poisson point processes 
$\mathcal E^{\alpha,j}_{A,B}$, $\overrightarrow{\mathcal E}^{\alpha,j}_{A,B}$, and $\overleftarrow{\mathcal E}^{\alpha,j}_{A,B}$, $j\geq 1$, 
of pairs of loop entrance points in $A$ and $B$, inner and outer bridges, respectively. 
Define sigma-algebras 
\[
\mathcal E = \sigma\left(\mathcal E^{\alpha,j}_{A,B},\,j\geq 1\right),\quad
\overrightarrow{\mathcal E} = \sigma\left(\overrightarrow{\mathcal E}^{\alpha,j}_{A,B},\,j\geq 1\right),\quad
\overleftarrow{\mathcal E} = \sigma\left(\overleftarrow{\mathcal E}^{\alpha,j}_{A,B},\,j\geq 1\right),
\]
and the sigma-algebra $\mathcal F_{\mathrm{ext}}$ generated by the loops from $\mathscr L^\alpha$ that do not intersect $\cube(x')$.

\medskip

Let $x$ be the unique neighbor of $y$ in $\dint\cube(x')$ and consider the event $D$ that 
$x$ is connected to $\edges(x')$ in $\mathcal V^\alpha\cap\cube(x')$. Then, 
\[
\left\{\text{$y$ is connected to $\edges(x')$ in $\mathcal V^\alpha\cap(\{y\}\cup\cube(x'))$}\right\} = D\cap\{y\in\mathcal V^\alpha\}.
\]
Finally, let $\mathcal E(\check x,\check y)$ be the event that none of the loop excursions from $A$ to $B$ starts at $x$ and none of them ends at $y$, 
namely, for all the pairs of points in $\mathcal E^{\alpha, j}_{A,B}$, $j\geq 1$, the first point is not $x$ and the second is not $y$. 
Note that $\{y\in\mathcal V^\alpha\}\subseteq \mathcal E(\check x,\check y)$.

\medskip

To prove \eqref{eq:surgery} it suffices to show that 
\begin{equation}\label{eq:surgery:1}
\mathbb P\left[D\,\Big|\,\sigma(\mathds{1}_{x'\in\good(\ltloop^\alpha)}),\,\overleftarrow{\mathcal E},\,\mathcal F_{\mathrm{ext}} \right]
\geq \gamma\,\mathds{1}_{\mathcal E(\check x,\check y), x'\in\good(\ltloop^\alpha)},\,\,\,\, \mathbb P\text{-a.s.}
\end{equation}
Indeed, 
\begin{multline*}
\mathbb P\left[
\text{$y$ is connected to $\edges(x')$ in $\mathcal V^\alpha\cap(\{y\}\cup\cube(x'))$, $x'\in\good(\ltloop^\alpha)$}
\,\Big|\,\Sigma_{\mathcal G},\,\mathcal A_{x'} \right] \\[7pt]
\begin{array}{l}
= 
\mathbb P\left[
D,\,y\in\mathcal V^\alpha,\, x'\in\good(\ltloop^\alpha)
\,\Big|\,\Sigma_{\mathcal G},\,\mathcal A_{x'} \right] \\[7pt]
= 
\mathbb E\left[
\mathbb P\left[
D,\,y\in\mathcal V^\alpha,\, x'\in\good(\ltloop^\alpha)
\,\Big|\,\sigma(\mathds{1}_{x'\in\good(\ltloop^\alpha)}),\,\overleftarrow{\mathcal E},\,\mathcal F_{\mathrm{ext}}\right]
\,\Big|\,\Sigma_{\mathcal G},\,\mathcal A_{x'}\right]\\[7pt]
=
\mathds{1}_{y\in\mathcal V^\alpha, x'\in\good(\ltloop^\alpha)}\,
\mathbb E\left[
\mathbb P\left[D\,\Big|\,\sigma(\mathds{1}_{x'\in\good(\ltloop^\alpha)}),\,\overleftarrow{\mathcal E},\,\mathcal F_{\mathrm{ext}}\right]
\,\Big|\,\Sigma_{\mathcal G},\,\mathcal A_{x'}\right]\\[7pt]
\stackrel{\eqref{eq:surgery:1}}\geq 
\gamma\,\mathds{1}_{y\in\mathcal V^\alpha, x'\in\good(\ltloop^\alpha)}\,
\mathbb E\left[\mathds{1}_{\mathcal E(\check x,\check y), x'\in\good(\ltloop^\alpha)}
\,\Big|\,\Sigma_{\mathcal G},\,\mathcal A_{x'}\right]\\[7pt]
\geq \gamma\,\mathds{1}_{y\in\mathcal V^\alpha, x'\in\good(\ltloop^\alpha)},\qquad\text{which gives \eqref{eq:surgery}.}
\end{array}
\end{multline*}

\medskip

By the definition of Poisson point process, the sigma-algebras $\mathcal F_{\mathrm{ext}}$ and 
$\sigma(\mathcal E, \overrightarrow{\mathcal E}, \overleftarrow{\mathcal E})$ are independent. Furthermore, 
by Proposition~\ref{prop:sampling-loopsoup}, the sigma-algebras $\overrightarrow{\mathcal E}$ and $\overleftarrow{\mathcal E}$ are conditionally independent given $\mathcal E$. 
Thus, 
\begin{equation}\label{eq:surgery:2}
\mathbb P\left[D\,\Big|\,\sigma(\mathds{1}_{x'\in\good(\ltloop^\alpha)}),\,\overleftarrow{\mathcal E},\,\mathcal F_{\mathrm{ext}} \right]
= 
\mathbb P\left[D\,\Big|\,\sigma(\mathds{1}_{x'\in\good(\ltloop^\alpha)}),\,\mathcal E \right],\,\,\,\, \mathbb P\text{-a.s.}
\end{equation}
Indeed, by Dynkin's $\pi$-$\lambda$ lemma, it suffices to show that for any admissible $\mathrm{e}$, $\overleftarrow{\mathrm e}$, and $F\in\mathcal F_{\mathrm{ext}}$, 
\begin{multline*}
\mathbb P\left[
D,\,x'\in\good(\ltloop^\alpha),\,\{\mathcal E^{\alpha,j}_{A,B}\}_{j\geq 1} = \mathrm{e},\,\{\overleftarrow{\mathcal E}^{\alpha,j}_{A,B}\}_{j\geq 1} = \overleftarrow{\mathrm{e}},\,F\right]\\
=
\mathbb E\left[x'\in\good(\ltloop^\alpha),\,\{\mathcal E^{\alpha,j}_{A,B}\}_{j\geq 1} = \mathrm{e},\,\{\overleftarrow{\mathcal E}^{\alpha,j}_{A,B}\}_{j\geq 1} = \overleftarrow{\mathrm{e}},\,F,\,
\mathbb P\left[D\,\Big|\,\sigma(\mathds{1}_{x'\in\good(\ltloop^\alpha)}),\,\mathcal E\right]
\right],
\end{multline*}
which is immediate, since by the (conditional) independence of sigma-algebras,
\begin{multline*}
\mathbb P\left[
D,\,x'\in\good(\ltloop^\alpha),\,\{\mathcal E^{\alpha,j}_{A,B}\}_{j\geq 1} = \mathrm{e},\,\{\overleftarrow{\mathcal E}^{\alpha,j}_{A,B}\}_{j\geq 1} = \overleftarrow{\mathrm{e}},\,F\right]\\
= 
\mathbb P\left[D,\,x'\in\good(\ltloop^\alpha),\,\{\mathcal E^{\alpha,j}_{A,B}\}_{j\geq 1} = \mathrm{e}\right]
\,
\mathbb P\left[\{\overleftarrow{\mathcal E}^{\alpha,j}_{A,B}\}_{j\geq 1} = \overleftarrow{\mathrm{e}},\,F\,\Big|\,\{\mathcal E^{\alpha,j}_{A,B}\}_{j\geq 1} = \mathrm{e}\right]\\
= 
\mathbb E\left[x'\in\good(\ltloop^\alpha),\,\{\mathcal E^{\alpha,j}_{A,B}\}_{j\geq 1} = \mathrm{e},\,
\mathbb P\left[D\,\Big|\,\sigma(\mathds{1}_{x'\in\good(\ltloop^\alpha)}),\,\mathcal E\right]
\right]\\
\mathbb P\left[\{\overleftarrow{\mathcal E}^{\alpha,j}_{A,B}\}_{j\geq 1} = \overleftarrow{\mathrm{e}},\,F\,\Big|\,\{\mathcal E^{\alpha,j}_{A,B}\}_{j\geq 1} = \mathrm{e}\right]\\
=
\mathbb E\left[x'\in\good(\ltloop^\alpha),\,\{\mathcal E^{\alpha,j}_{A,B}\}_{j\geq 1} = \mathrm{e},\,\{\overleftarrow{\mathcal E}^{\alpha,j}_{A,B}\}_{j\geq 1} = \overleftarrow{\mathrm{e}},\,F,\,
\mathbb P\left[D\,\Big|\,\sigma(\mathds{1}_{x'\in\good(\ltloop^\alpha)}),\,\mathcal E\right]
\right],
\end{multline*}
for all compatible $\mathrm{e}$ and $\overleftarrow{\mathrm{e}}$. 

\medskip

Thus, by \eqref{eq:surgery:1} and \eqref{eq:surgery:2}, it suffices to prove that 
\[
\mathbb P\left[D\,\Big|\,\sigma(\mathds{1}_{x'\in\good(\ltloop^\alpha)}),\,\mathcal E \right]
\geq \gamma\,\mathds{1}_{\mathcal E(\check x,\check y), x'\in\good(\ltloop^\alpha)},\,\,\,\, \mathbb P\text{-a.s.},
\]
in other words, that for all $\mathrm{e}$ such that $\{\{\mathcal E^{\alpha,j}_{A,B}\}_{j\geq 1} = \mathrm{e}\}\subseteq \mathcal E(\check x,\check y)$, 
\begin{equation}\label{eq:surgery:3}
\mathbb P\left[D,\,x'\in\good(\ltloop^\alpha),\,\{\mathcal E^{\alpha,j}_{A,B}\}_{j\geq 1} = \mathrm{e}\right]
\geq \gamma\,\mathbb P\left[x'\in\good(\ltloop^\alpha),\,\{\mathcal E^{\alpha,j}_{A,B}\}_{j\geq 1} = \mathrm{e}\right].
\end{equation}
In particular, we may and will assume from now on that 
\[
x\notin\edges(x'),
\]
since otherwise the claim is trivial. 

In fact, we will show a stronger statement. 
Let $F_{\mathrm{int},\emptyset}$ be the event that the set of loops from $\mathscr L^\alpha$ contained in $\cube(x')$ is empty, then 
\begin{equation}\label{eq:surgery:4}
\mathbb P\left[D,\,x'\in\good(\ltloop^\alpha),\,\{\mathcal E^{\alpha,j}_{A,B}\}_{j\geq 1} = \mathrm{e},\,F_{\mathrm{int},\emptyset}\right]
\geq \gamma\,\mathbb P\left[\{\mathcal E^{\alpha,j}_{A,B}\}_{j\geq 1} = \mathrm{e}\right]
\end{equation}
for all $\mathrm{e}$ as in \eqref{eq:surgery:3} and satisfying additionally $\{\{\mathcal E^{\alpha,j}_{A,B}\}_{j\geq 1} = \mathrm{e}\}\cap\{x'\in\good(\ltloop^\alpha)\}\neq\emptyset$. 
(This basically means that none of the loop excursions can start from $\edges(x')$ or end in a neighbor of $\edges(x')$ and that the total number of excursions 
does not exceed $\frac12 R^{d-1}$, cf.\ Definition~\ref{def:good}.)

\smallskip

Let $\overrightarrow{\mathcal N}^\alpha$ be the field of cumulative occupation local times in $\cube(x')$ of all the excursions from $\{\overrightarrow{\mathcal E}^{\alpha,j}_{A,B}\}_{j\geq 1}$, 
that is, for $z\in\cube(x')$, $\overrightarrow{\mathcal N}^\alpha(z)$ is the total number of times $z$ is visited by the bridges $\{\overrightarrow{\mathcal E}^{\alpha,j}_{A,B}\}_{j\geq 1}$. 
Also, let $\overrightarrow{\mathcal V}^\alpha = \{z\in\cube(x'):\overrightarrow{\mathcal N}^\alpha(z) = 0\}$. 
Note that 
\[
\{D,\,x'\in\good(\ltloop^\alpha),\,F_{\mathrm{int},\emptyset}\} = 
\{\text{$x$ is connected to $\edges(x')$ in $\overrightarrow{\mathcal V}^\alpha$},\,x'\in\good(\overrightarrow{\mathcal N}^{\alpha})\}\cap F_{\mathrm{int},\emptyset},
\]
and the two events on the right are independent. 
Since the number of loops from $\mathscr L^\alpha$ contained in $\cube(x')$ is a Poisson random variable with parameter $\alpha c$, for $c=c(R)$, 
\[
\mathbb P[F_{\mathrm{int},\emptyset}] = e^{-\alpha c}\geq e^{-\overline\alpha c}>0,
\]
and to finish the proof of \eqref{eq:surgery:4} it suffices to show that for all $\mathrm{e}$ as before and some $\gamma_1=\gamma_1(d,R)>0$, 
\[
\mathbb P\left[\text{$x$ is connected to $\edges(x')$ in $\overrightarrow{\mathcal V}^\alpha$},\,
x'\in\good(\overrightarrow{\mathcal N}^{\alpha}),\,\{\mathcal E^{\alpha,j}_{A,B}\}_{j\geq 1} = \mathrm{e}\right]
\geq \gamma_1\,\mathbb P\left[\{\mathcal E^{\alpha,j}_{A,B}\}_{j\geq 1} = \mathrm{e}\right],
\]
or, equivalently, that 
\begin{equation}\label{eq:surgery:5}
\mathbb P\left[\text{$x$ is connected to $\edges(x')$ in $\overrightarrow{\mathcal V}^\alpha$},\,
x'\in\good(\overrightarrow{\mathcal N}^{\alpha})\,\Big|\,\{\mathcal E^{\alpha,j}_{A,B}\}_{j\geq 1} = \mathrm{e}\right]
\geq \gamma_1.
\end{equation}
Let $\mathrm{e} = \{(x_i,y_i)\in A\times B,\,1\leq i\leq N\}$ be 
a multiset of all starting and ending locations of all the excursions of loops from $\mathscr L^\alpha$ from $A$ to $B$, 
which satisfies all the above assumptions on $\mathrm{e}$. By Proposition~\ref{prop:sampling-loopsoup}, the law of the excursions 
$\{\overrightarrow{\mathcal E}^{\alpha,j}_{A,B}\}_{j\geq 1}$, conditioned on $\{\mathcal E^{\alpha,j}_{A,B}\}_{j\geq 1} = \mathrm{e}$, 
is the law of independent random walk bridges from $x_i$ conditioned to enter $B$ in $y_i$, that is $\bigotimes_{i=1}^N \prw_{x_i,y_i}^B$.

Let $\{\mathcal X_i\}_{i=1}^N$ be a family of independent random walk bridges distributed according to $\bigotimes_{i=1}^N \prw_{x_i,y_i}^B$. 
Let $\overrightarrow{\mathcal N}$ be the field of cumulative occupation local times in $\cube(x')$ of all the bridges $\mathcal X_i$, 
that is, for $z\in\cube(x')$, $\overrightarrow{\mathcal N}(z)$ is the total number of times $z$ is visited by the bridges $\mathcal X_i$. 
Also, let $\overrightarrow{\mathcal V} = \{z\in\cube(x'):\overrightarrow{\mathcal N}(z) = 0\}$. Then, \eqref{eq:surgery:5} is equivalent to 
\begin{equation}\label{eq:surgery:6}
\bigotimes_{i=1}^N \prw_{x_i,y_i}^B \left[\text{$x$ is connected to $\edges(x')$ in $\overrightarrow{\mathcal V}$},\,
x'\in\good(\overrightarrow{\mathcal N})\right] \geq \gamma_1,
\end{equation}
for any choice of $\{(x_i,y_i)\in A\times B,\,1\leq i\leq N\}$ such that $N\leq \frac12 R^{d-1}$ and for all $i$, $x_i\notin\{x\}\cup\edges(x')$ and $y_i\notin\{y\}\cup\dext\edges(x')$.

\medskip

We prove that there exist $N$ simple (deterministic) paths $\rho_i$ from $x_i$ to $y_i$, such that 
\begin{equation}\label{eq:rho}
\begin{array}{l}
\text{(a) $\bigotimes_{i=1}^N \prw_{x_i,y_i}^B \left[\mathcal X_i = \rho_i,\,1\leq i\leq N\right] \geq \gamma_1$ and}\\[3pt]
\text{(b) event $\{\mathcal X_i = \rho_i,\,1\leq i\leq N\}$ implies the event inside probability in \eqref{eq:surgery:6}.}
\end{array}
\end{equation}
Once the existence of such paths $\rho_i$ is shown, \eqref{eq:surgery:6} is immediate. 

\smallskip

Recall that we assume $x\notin\edges(x')$. Thus, precisely one of the coordinates, say coordinate $i$, of the vector $x - x'$ is $-R$ or $R$, and 
the other coordinates take values between $-R+3$ and $R-3$. 
Let $j$ be the first coordinate which is not equal to $i$  
and denote by $e_s$ the $s$th coordinate unit vector. We define the set $\Pi$ in $\cube(x')$ as
\[
\{x, x + e_i, x + 2e_i\}\cup(\{x + 2e_i + te_j~:~t\geq 0\}\cap\cube(x'))
\]
if the $i$th coordinate of $x-x'$ equals $-R$, and as 
\[
\{x, x - e_i, x - 2e_i\}\cup(\{x - 2e_i + te_j~:~t\geq 0\}\cap\cube(x'))
\]
if the $i$th coordinate of $x-x'$ equals $R$, see Figure~\ref{fig:surgery}. 
Note that for $R\geq 4$, 
\begin{itemize}
\item
$\Pi\cap\edges(x') \neq \emptyset$,
\item
$\overline\cube = \cube(x')\setminus \left(\dint\cube(x')\cup\edges(x')\cup\Pi\right)$ is a connected subset of $\cube(x')$,
\item
every $z\in\dint\cube(x')\setminus\left(\edges(x')\cup\{x\}\right)$ has a neighbor in $\overline\cube$.
\end{itemize}
\begin{figure}[!tp]
\centering
\resizebox{15cm}{!}{\includegraphics{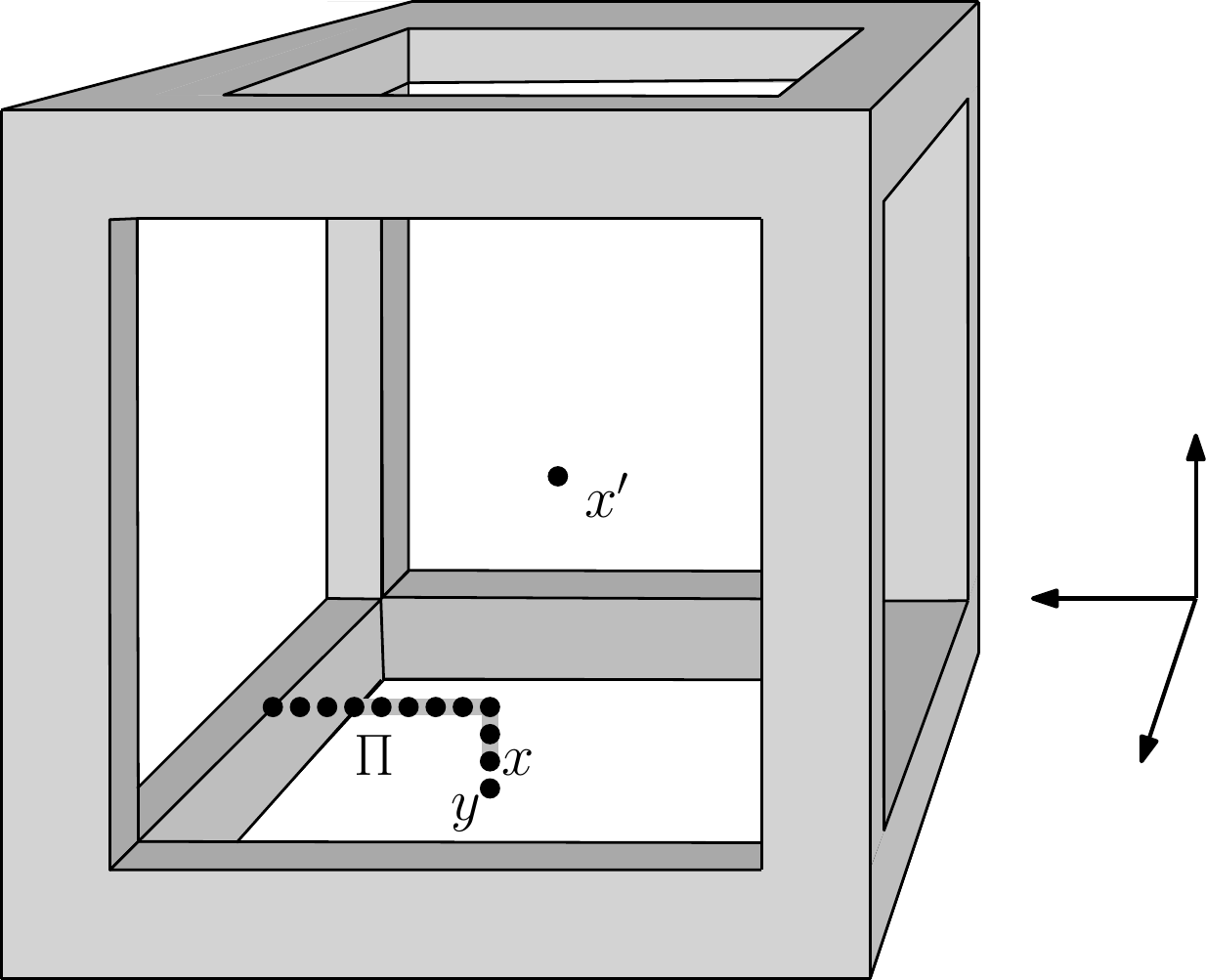}\hspace{4em}\includegraphics{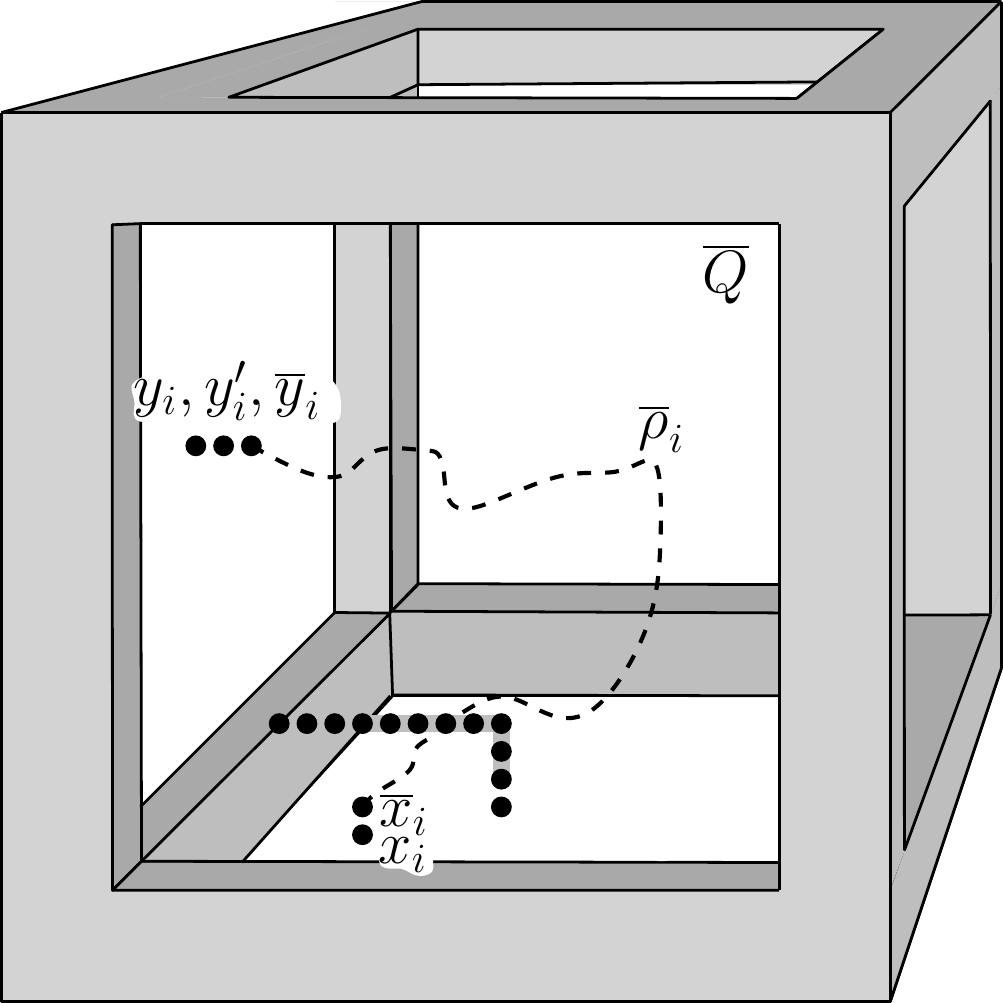}}
\caption{On the left, the ``tunnel'' $\Pi$, which connects $x$ to $\edges(x')$ inside of $\cube(x')$. 
On the right, a simple path $\overline{\rho}_i$ between $\overline{x}_i$ and $\overline{y}_i$ 
inside the connected set $\overline\cube = \cube(x')\setminus \left(\dint\cube(x')\cup\edges(x')\cup\Pi\right)$. 
The simple path $\rho_i$, defined as $(x_i,\overline{\rho}_i,y_i',y_i)$, visits the boundary $\dint\cube(x')$ exactly $2$ times, namely, at $x_i$ and $y_i'$.} 
\label{fig:surgery}
\end{figure}
Coming back to the random walk bridges, for each $x_i$ and $y_i$, 
let $\overline{x}_i$ be the unique neighbor of $x_i$ in $\overline\cube$ (note that $x_i\in \dint\cube(x')\setminus\left(\edges(x')\cup\{x\}\right)$ by assumptions), 
$y_i'$ the unique neighbor of $y_i$ in $\cube(x')$ (note that $y_i'\in \dint\cube(x')\setminus\left(\edges(x')\cup\{x\}\right)$) 
and $\overline{y}_i$ the unique neighbor of $y_i'$ in $\overline\cube$. 
Let $\overline{\rho}_i$ be an arbitrary simple path from $\overline{x}_i$ to $\overline{y}_i$ in $\overline\cube$, see Figure~\ref{fig:surgery}.

\smallskip

We define $\rho_i$ as the path $(x_i,\overline{\rho}_i,y_i',y_i)$. Then, 
each $\rho_i$ is a simple path from $x_i$ to $y_i$ that avoids $\Pi$, visits $\dint\cube(x')$ exactly twice and stops on entering $B$ (at $y_i$). 
Thus, 
\begin{itemize}
\item
for each $i$, $\prw_{x_i,y_i}^B\left[\mathcal X_i = \rho_i\right]\geq (2d)^{-|\rho_i|}\geq (2d)^{-|\cube(x')|} = c(d,R)$ and 
\item
the total number of visits of all $\rho_i$ to $\dint\cube(x')$ is not bigger than $R^{d-1}$.
\end{itemize}
In other words, the collection of paths $\rho_i$ satisfies the desired properties \eqref{eq:rho}. 

This way, the proof of \eqref{eq:surgery:6} (hence of Lemma~\ref{l:surgery}) is complete. 
\qed

\section{General approach to correlated percolation models}\label{sec:correlated-percolations}

For $d\geq 2$, let $\Omega=\{0,1\}^{\Z^d}$ and $\set = \set(\atom) = \{x\in\Z^d:\atom(x) = 1\}$ 
the subgraph of $\Z^d$ induced by $\atom\in\Omega$.
Let $\mathcal F$ be the sigma-algebra on $\Omega$ generated by the coordinate maps $\Psi_x$, $x\in\Z^d$, 
and let $\mathbb P^u$, $u\in(a,b)$,
be a \emph{family} of probability measures on $(\Omega,\mathcal F)$, for some (fixed) $0<a<b<\infty$. 

Under general assumptions on the family $\{\P^u\}_{u\in(a,b)}$ introduced in \cite{DRS12} it has been proven that 
for each $u\in(a,b)$, the random set $\set$ contains a unique infinite connected component $\set_\infty$, which on large scales ``looks like $\Z^d$'', 
for instance, for $\P^u$-almost every $\omega\in\Omega$, balls in $\set_\infty$ have asymptotic deterministic shape \cite{DRS12}, 
the simple random walk on $\set_\infty$ converges to a Brownian motion with a deterministic positive diffusion constant \cite{PRS}, 
its transition probabilities satisfy quenched Gaussian heat kernel bounds and the local CLT, etc.\ \cite{S14}.
These assumptions on $\{\P^u\}_{u\in(a,b)}$ are the following.

\begin{itemize}
\item[\p{}] {\it (Ergodicity)}
For each $u\in(a,b)$, every lattice shift is measure preserving and ergodic on $(\Omega,\mathcal F,\mathbb P^u)$.

\item[\pp{}] {\it (Monotonicity)}
For any $a<u<u'<b$ and increasing event $G\in\mathcal F$, 
$\mathbb P^u[G] \leq \mathbb P^{u'}[G]$.

\item[\ppp{}] {\it (Decoupling)}
There exist $\RP,\LP <\infty$ and $\epsP,\constP>0$ such that for any integers $L\geq \LP$ and $R\geq \RP$, 
if $a<\widehat u<u<b$ satisfy $u\geq \left(1 + R^{-\constP}\right)\, \widehat u$, $x_1,x_2\in\Z^d$ satisfy $\|x_1 - x_2\| \geq R L$, 
$A_1,A_2\in\sigma(\Psi_y,\,y\in \ball(x_i,10L))$ are increasing events and $B_1,B_2\in\sigma(\Psi_y,\,y\in \ball(x_i,10L))$ are decreasing, 
then 
\[
\mathbb P^{\widehat u}\left[A_1\cap A_2\right] \leq 
\mathbb P^u\left[A_1\right] \cdot
\mathbb P^u\left[A_2\right] 
+ \exp\left(- e^{(\log L)^\epsP} \right) ,\
\]
and
\[
\mathbb P^u\left[B_1\cap B_2\right] \leq 
\mathbb P^{\widehat u}\left[B_1\right] \cdot
\mathbb P^{\widehat u}\left[B_2\right] 
+ \exp\left(- e^{(\log L)^\epsP}\right) .
\]
\item[\s{}] {\it (Local uniqueness)}
For each $u\in(a,b)$, there exist $\constS>0$ and $\RS<\infty$ so that 
for all $R\geq \RS$, 
\[
\mathbb P^u\left[ \, 
\set_\infty\cap \ball(0,R) \neq \emptyset \,
\right]
\geq 
1 - \exp\left(-(\log R)^{1+\constS}\right) , 
\]
and
\[
\mathbb P^u\left[
\begin{array}{c}
\text{any two connected subsets of $\set\cap \ball(0,R)$ with}\\
\text{diameter $\geq \textstyle{\frac{R}{10}}$ are connected in $\set\cap \ball(0,2R)$}
\end{array}
\right]
\geq 1 - \exp\left(-(\log R)^{1+\constS}\right) . 
\]
\item[\sss{}] {\it (Continuity)}
Function $\eta(u)= \mathbb P^u\left[0\in\set_\infty\right]$ is positive and continuous on $(a,b)$. 
\end{itemize}

\medskip

While properties \p{} and \s{} are rather natural and have been extensively used in the analysis of supercritical percolation models, 
conditions \pp{}, \ppp{} and \sss{} represent the novelty of this framework and serve as a substitute to independence. 
(In fact, \pp{} easily follows from \ppp{} and is stated separately only for convenience.) 
They provide a connection between the measures $\P^u$ with different values of the parameter and 
serve \emph{only} to prove the likeliness of certain patterns in $\set_\infty$, cf.\ \cite[Remark~1.9(1)]{S14}. 
More precisely, if an increasing, resp.\ decreasing, (\emph{seed}) event is unlikely with respect to measure $\P^{u+\delta}$, resp.\ $\P^{u-\delta}$, 
then by applying \ppp{} recursively, one concludes that a family of $2^n$ translates of the event 
sufficiently spread out on $\Z^d$ in a certain hierarchical manner (\emph{cascading events}) occur with probability $\leq 2^{-2^n}$ 
with respect to measure $\P^u$, cf.\ \cite[Theorem~4.1]{DRS12}. 
Then, one uses \sss{} to show that the probabilities of suitable seed events (cf.\ \cite[Section~5]{DRS12}) with respect to 
measures $\P^{u+\delta}$, resp.\ $\P^{u-\delta}$, and $\P^u$ are close for small enough $\delta$, cf.\ \cite[Lemmas~5.2 and 5.4]{DRS12}.
In other words, one starts with a suitable increasing, resp.\ decreasing, seed event unlikely with respect to $\P^u$, 
concludes that it is also unlikely with respect to $\P^{u+\delta}$, resp.\ $\P^{u-\delta}$, for small $\delta>0$, and obtains that 
sufficiently spread out translates of the seed event are unlikely with respect to $\P^u$, but now with an explicit bound on the probability. 
All the other arguments in \cite{DRS12}, as well as in \cite{PRS,S14}, 
do not require comparison of probability laws with different parameters and go through for each fixed $u$ if $\P^u$ satisfies \p{} and \s{}. 

\medskip

In this section we prove in Theorem~\ref{thm:badseed:proba} that the result of \cite[Theorem~4.1]{DRS12} holds for families of probability measures $\P^u$ that 
satisfy condition \d{}, which is weaker than \ppp{}. As \ppp{} is only used in \cite{DRS12,PRS,S14} to derive \cite[Theorem~4.1]{DRS12}, 
all the results about geometric properties of $\set_\infty$ proved in \cite{DRS12,PRS,S14} 
hold for families of probability measures $\P^u$ that satisfy \p{}, \pp{}, \d{}, \s{}, \sss{}, see Corollary~\ref{cor:P1-S2:D}. 
This weakening is crucial in the study of the vacant set of the random walk loop soup, since 
it satisfies \d{}, but not \ppp{} (see Remarks~\ref{rem:d}(4) and \ref{rem:p3-loopsoup}).

\medskip

The family of probability measures $\P^u$, $u\in(a,b)$, satisfies condition \d{} if 

\begin{itemize}
\item[\d{}]
There exist constants $C,c$ and $\beta,\gamma,\zeta>0$ such that 
for all $L,s\geq 1$, $x_1,x_2\in\R^d$ with $\|x_1-x_2\|=sL$ and $a<u<u'<b$,
\begin{itemize}[leftmargin=*]
\item[(a)]
if $A_i\in\sigma(\Psi_y:y\in \ball(x_i,L))$ are increasing events, then 
\begin{equation}\label{eq:di:increasing}
\P^u\left[A_1\cap A_2\right] \leq \P^{u'}\left[A_1\right] \, \P^{u'}\left[A_2\right] + C\,\exp\left(-c\,\min\left\{(u'-u)^\beta\,s^\gamma,\,e^{(\log L)^\zeta}\right\}\right),
\end{equation}
\item[(b)]
if $B_i\in\sigma(\Psi_y:y\in \ball(x_i,L))$ are decreasing events, then 
\begin{equation}\label{eq:di:decreasing}
\P^{u'}\left[B_1\cap B_2\right] \leq \P^u\left[B_1\right] \, \P^u\left[B_2\right] + C\,\exp\left(-c\,\min\left\{(u'-u)^\beta\,s^\gamma,\,e^{(\log L)^\zeta}\right\}\right).
\end{equation}
\end{itemize}
\end{itemize}

\begin{remark}\label{rem:d}
\begin{enumerate}[leftmargin=*]\itemsep0pt
 \item
Note that inequalities \eqref{eq:di:increasing} and \eqref{eq:di:decreasing} are always valid if $(u'-u)^\beta\,s^\gamma\leq 1$,
thus condition \d{} would not change if one additionally assumes that $u'-u \geq s^{-\frac{\gamma}{\beta}}$. 
Now it is immedate that \d{} implies \ppp{} (take $s=R$, $\epsP = \zeta$, $\constP = \frac{\gamma}{\beta}$). 

\item
If inequalities \eqref{eq:di:increasing} and \eqref{eq:di:decreasing} hold only for $u'-u\geq s^{-\chi}$ for some $0<\chi< \frac{\gamma}{\beta}$, 
then they hold for all $u<u'$ with $(\beta,\gamma,\zeta)$ replaced by $(\beta' = \beta, \gamma'=\beta\chi, \zeta' = \zeta)$.

\item
In applications one uses \d{} to prove certain behavior of $\set_\infty$ under $\P^u$ for a fixed $u$ (see discussion before the definition of \d{}),
thus one only needs \d{} for $u'$s in a vicinity of $u$. In other words, one can assume that $b-a<1$. 
If so, inequalities \eqref{eq:di:increasing} and \eqref{eq:di:decreasing} get weaker by enlarging $\beta$ or diminishing $\gamma$. 
Thus, the reader should think of $\gamma$ being small and $\beta$ large. 
Incidentally, \d{} is satisfied by the random interlacements and the level sets of the Gaussian free field with $\gamma = d-2$ and $\beta = 2$, see, e.g., \cite{PT15,PR15}. 

\item
By Theorem~\ref{thm:decoupling}, condition \d{} is satisfied by the range of the loop soup $\mathscr L^\alpha$ 
with $\gamma = d-2$ and $\beta = \frac12$ (and any $\zeta>0$). 

\item
The key differences between \d{} and \ppp{} are that 
\begin{itemize}[leftmargin=*]\itemsep0pt
\item[(a)]
in models with polynomially decaying correlations (such as random interlacements, the Gaussian free field and the random walk loop soup), 
condition \d{} holds automatically if $s\leq \epsilon(\log L)^{\frac1\gamma}$; 
this way it is more natural than \ppp{}, since it only postulates decorrelation of local events occuring in large boxes when the boxes are far apart 
in comparison to their size,
\item[(b)]
the error term in \ppp{} improves by passing to higher scales $L$, while the one in \d{} is essentially invariant under rescaling of $L$. 
\end{itemize}
\end{enumerate}
\end{remark}

\begin{remark}\label{rem:p3-loopsoup}
The observations in Remark~\ref{rem:d}(5) are crucial for why \ppp{} is not a valid condition for the loop soup percolation.
Indeed, the range of the loop soup in disjoint boxes is correlated because of big loops that visit both boxes. 
If the boxes and the distance between them have the same scale (of order $L$, resp., $RL$ with a large but fixed $R$), 
then the stochastic behavior of the macroscopic loops visiting these boxes is essentially independent of the scale $L$. 
(Note that the loop soup on $\frac{1}{L}\Z^d$ converges for large $L$ to the Brownian loop soup, see, e.g., \cite{SS-LEW}.)
Using this observation, Chang proved in \cite{Ch15} that condition \ppp{} does not hold 
for events 
\begin{eqnarray*}
A_1 &= &\{\text{number of loop excursions from $\dint\ball(x_1,L)$ to $\dint\ball(x_1,2L)$ is at least $N$}\},\\
A_2 &= &\{\text{number of loop excursions from $\dint\ball(x_2,L)$ to $\dint\ball(x_2,2L)$ is at least $c_RN$}\},
\end{eqnarray*}
where $c_R = c\,R^{2(2-d)}$. Indeed, on \cite[page 3182]{Ch15} Chang proves that $\P^\alpha[A_2|A_1] \sim 1$ and $\P^\alpha[A_1]\sim c_\alpha\,\rho^N\,N^{\alpha-1}$ as $N\to\infty$ (unformly in $L$).
As a result, $\P^\alpha[A_1\cap A_2] \gg \P^{\alpha(1+\delta)}[A_1]\,\P^{\alpha(1+\delta)}[A_2] \geq c(N)>0$ as $N\to\infty$ (uniformly in $L$). 

\smallskip

In general, events defined by the range of the loop soup are quite different from those defined by loop excursions, 
so the above argument does not disprove \ppp{} for the loop soup. 
(Mind though that existing proofs of decoupling inequalities for random interlacements (and the one of Theorem~\ref{thm:decoupling}) use decompositions into excursions and 
do apply to events $A_1,A_2$, thus if \ppp{} were true for the loop soup, it would be at least hard to verify.)
However, if $d\geq 5$ and $\alpha>0$ small enough 
then for all large $L$, the event that there are at least $2N$ vertex disjoint paths in the range from $\dint\ball(x,L)$ to $\dint\ball(x,2L)$ (later called crossings) 
is essentially equivalent to the event that there are at least $N$ inner loop excursions from $\dint\ball(x,L)$ to $\dint\ball(x,2L)$ 
and $N$ outer excursions from $\dint\ball(x,2L)$ to $\dint\ball(x,L)$. 
(The argument below works for any $\alpha<\alpha_\sharp$, where $\alpha_\sharp$ is the critical threshold for the finiteness of the expected size of the cluster of the origin, see \cite[(2)]{CS16}.) 
More precisely, using the same ideas as in \cite[Section~5]{CS16} one shows that with high probability as $L\to\infty$, 
each crossing from $\dint\ball(x,L)$ to $\dint\ball(x,2L)$ is built from a chain of at most $C\log L$ loops, 
from which exactly one loop has diameter of order $L$ and all the others are of diameter at most $L^{1-2\epsilon}$. 
This implies that every crossing uses an inner or an outer loop excursion between $\dint\ball(x,L+L^{1-\epsilon})$ and $\dint\ball(x,2L-L^{1-\epsilon})$. 
In dimensions $d\geq 5$ with high probability as $L\to\infty$, each excursion is a chain of small sausages linked through cut points, 
which allows to show that each such excursion contributes to exactly one crossing. 
Thus, if the number of crossings from $\dint\ball(x,L)$ to $\dint\ball(x,2L)$ is at least $2N$ (a fixed large number), then 
with high probability as $L\to\infty$, the number of inner and outer loop excursions between $\dint\ball(x,L+L^{1-\epsilon})$ and $\dint\ball(x,2L-L^{1-\epsilon})$
is at least $2N$. 
Vice versa, if the number of excursions between $\dint\ball(x,L-L^{1-\epsilon})$ and $\dint\ball(x,2L+L^{1-\epsilon})$ is at least $2N$, 
then with high probability as $L\to\infty$, the excursions do not intersect each other in $\ball(x,2L)\setminus\ball(x,L)$, which implies that 
the number of crossings from $\dint\ball(x,L)$ to $\dint\ball(x,2L)$ is at least $2N$. 
Using this correspondence between crossings and loop excursions and the above argument of Chang, it is easy to conclude that 
\ppp{} does not hold for the events $\{$number of crossings in the range from $\dint\ball(x_1,L)$ to $\dint\ball(x_1,2L)$ is at least $2N\}$ 
and $\{$number of crossings in the range from $\dint\ball(x_2,L)$ to $\dint\ball(x_2,2L)$ is at least $2c_RN\}$. 
We leave the details of this argument to the reader. 

Although the above reasoning only serves to disprove \ppp{} for the loop soup $\mathscr L^\alpha$ in dimensions $d\geq 5$ and small $\alpha$, 
it is (together with the result of Chang) a good enough evidence that \ppp{} is not a valid condition to study the loop soup. 
Furthermore, in addition to Remark~\ref{rem:d}(5), the argument demonstrates that condition \d{} is weaker than \ppp{}. 
Since by Theorem~\ref{thm:badseed:proba} condition \ppp{} can be replaced by \d{} in all its known applications, 
it is not that interesting to try proving if \ppp{} fails in the remaining cases. 
\end{remark}

\begin{remark}
It is easy to see that the measures $\P^u$ that satisfy \d{}(a) or \d{}(b) are stochastically monotone, i.e., satisfy \pp{}.
The condition is particularly interesting for $\zeta\in(0,1)$, since in this case $e^{(\log L)^\zeta} = o(L^p)$ for any $p>0$.
Furthermore, if $\zeta>\frac12$, then the error term in \eqref{eq:di:increasing} and \eqref{eq:di:decreasing} can be replaced by 
$C\,\exp\left(-c\,\min\left\{(u'-u)^\beta\,s^\gamma,\,(u'-u)^\rho\,e^{(\log L)^\zeta}\right\}\right)$
with an arbitrary $\rho>0$ (see Remark~\ref{rem:zeta}).
\end{remark}

\subsection{Cascading events}

Let $l_k,r_k,L_k$, $k\geq 0$ be sequences of positive integers such that 
\[
L_k = l_{k-1}\cdot L_{k-1}, \quad k \geq 1.
\]
Consider renormalized lattices
\begin{equation*}
\GG_k = L_k \Z^d = \{L_kx : x\in\Z^d\},\quad k\geq 0,
\end{equation*}
and define 
\begin{equation}\label{def:Lambdaxk}
\Lambda_{x,k} = \GG_{k-1}\cap(x+[0,L_k)^d),\quad k\geq 1,\,x\in\GG_k.
\end{equation}
(Note that $|\Lambda_{x,k}| = (l_{k-1})^d$.)

\medskip

For $L_0\geq 1$ and $x\in\GG_0$, any event $\badseed_x = \badseed_{x,0}\in \sigma(\Psi_y, y \in x + [-L_0, 3L_0)^d)$ 
is called a \emph{seed} event. (For simplicity, we omit from notation the dependence of seed events on $L_0$.)
The family of seed events $(\badseed_x:L_0\geq 1,x\in\GG_0)$ is denoted by $\badseed$. 

For $k\geq 1$ and $x\in \GG_k$, we recursively define the events 
\begin{equation}\label{def:cascading}
\badseed_{x,k}= \bigcup_{\scalebox{.8}{$\begin{array}{c} x_1,x_2\in \Lambda_{x,k}\\ \|x_1-x_2\| > r_{k-1}\, L_{k-1} \end{array}$}}
\badseed_{x_1,k-1} \cap \badseed_{x_2,k-1} ~.\
\end{equation}

\medskip

The main result of this section is the following theorem, which states that the result of \cite[Theorem~4.1]{DRS12} holds 
if the family of probability measures $\P^u$ satisfies assumption \d{}. Its proof is given in Section~\ref{sec:badseed:proof}.
\begin{theorem}\label{thm:badseed:proba}
Let $\theta>1$ such that $(\theta+1)\zeta> 1$ and consider the scales 
\begin{equation}\label{def:scales}
l_0,r_0,L_0\geq 1,\quad l_k = l_0\, 4^{\lfloor k^\theta\rfloor},\quad r_k = r_0\, 2^{\lfloor k^\theta\rfloor},\quad L_k = l_{k-1}L_{k-1},\quad k\geq 1.
\end{equation}
Let $\mathbb P^u$, $u\in(a,b)$, be a family of probability measures on $(\Omega,\mathcal F)$. 
Let $\badseed$ be a family of seed events such that for some $u'\in(a,b)$, 
\begin{equation}\label{eq:decoupling:condition}
\liminf_{L_0\to\infty} \sup_{x\in \GG_0} \P^{u'}\left[\badseed_x\right] = 0.
\end{equation}
\begin{itemize}
 \item[(a)]
 If all $\badseed_x$ are increasing and the family $\P^u$ satisfies \d{}(a), 
 then for any $u\in(a,u')$, there exists $C=C(u,u')$ such that 
 for all $l_0\geq 1$, $r_0\geq C(1+\log l_0)^{\frac{2}{\gamma}}$ and some $L_0\geq 1$, 
\begin{equation}\label{eq:decoupling:result}
\sup_{x\in\GG_k} \mathbb P^u\left[\badseed_{x,k}\right] \leq 2^{-2^k} ,\qquad k\geq 0.\
\end{equation}
\item[(b)]
 If all $\badseed_x$ are decreasing and the family $\P^u$ satisfies \d{}(b), 
 then for any $u\in(u',b)$, there exists $C=C(u,u')$ such that 
 for all $l_0\geq 1$, $r_0\geq C(1+\log l_0)^{\frac{2}{\gamma}}$ and some $L_0\geq 1$, \eqref{eq:decoupling:result} holds.
\end{itemize}
Furthermore, if the limit (as $L_0\to\infty$) in \eqref{eq:decoupling:condition} exists (and equals $0$), then 
there exists $C'(u,u',l_0,\badseed)$ such that the statements (a) and (b) hold \emph{for all} $L_0\geq C'$.
\end{theorem}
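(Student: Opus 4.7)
The plan is to adapt the multiscale induction of \cite[Theorem~4.1]{DRS12} to the weaker decoupling \d{}; I treat the increasing case~(a), the decreasing case~(b) being entirely analogous with parameter directions reversed. Fix an intermediate value $u''\in(u,u')$ and a positive summable sequence $(\sigma_j)_{j\geq 0}$ with $\sum_{j\geq 0}\sigma_j\leq u''-u$, for instance $\sigma_j=c_0(j+1)^{-2}$ with appropriate $c_0$. Define the increasing parameter sequence $u_0=u$, $u_k=u+\sum_{j=0}^{k-1}\sigma_j$, so that $u_k\nearrow u_\infty\leq u''<u'$. For a target depth $n\geq 1$, set
\[
p_k\;:=\;\sup_{x\in\GG_k}\P^{u_{n-k}}\bigl[\badseed_{x,k}\bigr],\qquad 0\leq k\leq n,
\]
so that the quantity to bound is $p_n$, while $p_0=\sup_x\P^{u_n}[\badseed_x]\leq\sup_x\P^{u'}[\badseed_x]$ by the stochastic monotonicity that follows from \d{}; the latter is as small as we wish for $L_0$ large, thanks to~\eqref{eq:decoupling:condition}.

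The inductive step is a union bound over the at most $(l_{k-1})^{2d}$ ordered pairs in~\eqref{def:cascading}, combined with \d{}(a) applied at spatial scale $L\asymp L_{k-1}$ (since $\badseed_{\cdot,k-1}$ is supported in a box of side of order $L_{k-1}$), distance parameter $s\geq r_{k-1}/C$, and sprinkling $u_{n-k+1}-u_{n-k}=\sigma_{k-1}$. This yields
\[
p_k\;\leq\;(l_{k-1})^{2d}\bigl(p_{k-1}^2+\varepsilon_{k-1}\bigr),\qquad \varepsilon_{k-1}=C\exp\!\Bigl(-c\min\!\bigl\{\sigma_{k-1}^{\beta}\,r_{k-1}^{\gamma},\;e^{(\log L_{k-1})^\zeta}\bigr\}\Bigr).
\]
I then want to verify that $\varepsilon_{k-1}\leq p_{k-1}^2$, which with the target $p_{k-1}\leq 2^{-2^{k-1}}$ reduces to the minimum in the exponent exceeding $c\cdot 2^k$. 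With $\sigma_{k-1}\sim k^{-2}$, the first argument is at least $c\,r_0^{\gamma}\,k^{-2\beta}\,2^{\gamma(k-1)^{\theta}}$, which dwarfs $2^k$ for large $k$ because $\theta>1$; for small $k$ the hypothesis $r_0\geq C(u,u')(1+\log l_0)^{2/\gamma}$ absorbs the polylog factor in $l_0$ that enters through the unrolling constant below. For the second argument, $(\log L_{k-1})^\zeta\asymp k^{(\theta+1)\zeta}$ overtakes $ck$ as $k\to\infty$ precisely because $(\theta+1)\zeta>1$, and smallness for small $k$ is secured by further enlarging $L_0$.

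Once the error absorption is in place, the recurrence simplifies to $p_k\leq 2(l_{k-1})^{2d}p_{k-1}^2$, and iterating gives $\log p_n\leq 2^n\bigl(\log(2M)+\log p_0\bigr)$, where
\[
M\;:=\;\prod_{i\geq 0}l_i^{\,d/2^i}
\]
is finite and depends only on $d$, $l_0$ and $\theta$, thanks to the explicit form~\eqref{def:scales} of the $l_i$'s together with $\sum i^{\theta}2^{-i}<\infty$. Demanding $p_0\leq 1/(4M)$---which by~\eqref{eq:decoupling:condition} holds for \emph{some} sufficiently large $L_0$, and for \emph{all} $L_0\geq C'(u,u',l_0,\badseed)$ when the liminf is a bona fide limit---gives $p_n\leq 2^{-2^n}$ uniformly in $n$, completing the proof in case~(a); the reversed construction using \d{}(b) settles~(b).

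The main obstacle will be the simultaneous tuning of three interlocking schedules: the sprinklings $(\sigma_j)$ must be summable (to stay inside $(u,u')$) yet not decay too fast (so that $\sigma_k^{\beta}r_k^{\gamma}$ grows doubly exponentially with $k$); the scale exponents must satisfy $\theta>1$ (to ensure the first branch of the minimum dominates $2^k$) and $(\theta+1)\zeta>1$ (to ensure the second branch does, which is what governs the regime where $r_k$ alone cannot suffice); and $r_0$, $L_0$ must be initially large enough to absorb both the combinatorial factors $(l_{k-1})^{2d}$ that accumulate at every level of the unrolling and the initial obstruction from small $k$.
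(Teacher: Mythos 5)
Your proposal follows essentially the same route as the paper: the same multiscale induction, the same union bound over the at most $l_{k-1}^{2d}$ pairs in the definition of $\badseed_{x,k}$, the same use of \d{} with a summable sprinkling schedule, the same roles for $\theta>1$ and $(\theta+1)\zeta>1$ in making the two branches of the minimum beat $2^k$ (up to a $1+\log l_0$ factor), and your constant $M=\prod_i l_i^{d/2^i}$ is exactly the paper's $\Delta_0$-bookkeeping in unrolled form. Two points of the write-up need repair, both of bookkeeping rather than of substance. First, the parameter indexing is inconsistent: with $u_k=u+\sum_{j<k}\sigma_j$ and $p_k=\sup_x\P^{u_{n-k}}[\badseed_{x,k}]$, the sprinkling available at the step from level $k-1$ to level $k$ is $u_{n-k+1}-u_{n-k}=\sigma_{n-k}$, \emph{not} $\sigma_{k-1}$; read literally, at a fixed low level $k$ the error term is $C\exp(-c\,\sigma_{n-k}^\beta r_{k-1}^\gamma)$, which does not stay below $2^{-2^k}$ as $n\to\infty$. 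The fix is to anchor the sequence at the top and make the induction independent of the target depth: set $v_0=u''$ (or $u'$), $v_k=v_{k-1}-\sigma_{k-1}\geq u$, prove $\sup_x\P^{v_k}[\badseed_{x,k}]\leq b_k$ by induction on $k$, and conclude by monotonicity $\P^u\preceq\P^{v_k}$ for increasing events; this is exactly the paper's scheme, where the step-$k$ sprinkling is $r_k^{-\gamma/(2\beta)}$ (which is also where the exponent $2/\gamma$ in the hypothesis on $r_0$ comes from; your fixed schedule $\sigma_j\asymp (j+1)^{-2}$ works as well and in fact only requires $r_0\geq C(u,u')(1+\log l_0)^{1/\gamma}$). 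Second, the absorption step cannot be stated as ``$\varepsilon_{k-1}\leq p_{k-1}^2$'': that would need a \emph{lower} bound on the unknown $p_{k-1}$, which may even vanish. You must compare $\varepsilon_{k-1}$ with the square of the explicit inductive bound $b_{k-1}$ (the paper's $2^{-\Delta_{k-1}2^{k-1}}$); since $b_{k-1}\geq (4M)^{-2^{k-1}}$, this is precisely how the $(1+\log l_0)$ factor enters the requirement on the minimum, matching the paper's condition $\min\{r_k^{\gamma/2},\,e^{(\log L_k)^\zeta}\}\geq C\,\Delta_0\,2^{k+1}$. With these two adjustments your argument coincides with the paper's proof, including the treatment of the base case for some, respectively all large, $L_0$.
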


To study geometric properties of the unique infinite percolation cluster $\set_\infty$ as in \cite{DRS12,PRS,S14}, 
one needs to impose further conditions on the scales $l_k,r_k$, namely, that for all $k\geq 0$, 
$r_k$ divides $l_k$, $l_k>16r_k$ and $\sum_{k=0}^\infty\frac{r_k}{l_k}$ is sufficiently small, see, e.g., below \cite[(37)]{S14}. 
This can be easily achieved, for instance, by taking in \eqref{def:scales} $l_0=r_0^2$ and $r_0$ large enough. 
We briefly summarize the main consequences of Theorem~\ref{thm:badseed:proba}:
\begin{corollary}\label{cor:P1-S2:D}
Assume that a family of probability measures $\P^u$, $u\in(a,b)$, satisfies assumptions \p{}, \pp{}, \d{}, \s{}, \sss{}. 
Then all the results on geometry of $\set_\infty$ from \cite{DRS12,PRS,S14} hold for all $u\in(a,b)$, more precisely, 
\begin{itemize}\itemsep0pt
\item
Theorems~2.3 (chemical distances) and 2.5 (shape theorem) in \cite{DRS12},
\item
Theorem~1.1 in \cite{PRS} (quenched invariance principle),
\item
Theorem~1.13 (Barlow's ball regularity), Corollary~1.14 (quenched Gaussian heat kernel bounds, elliptic and parabolic Harnack inequalities), 
Theorem~1.19 (quenched local CLT), as well as Theorems~1.16--1.18, 1.20 in \cite{S14}.
\end{itemize}
We refer the reader to the introduction of \cite{S14} for the precise statements of these results and relevant discussion.
\end{corollary}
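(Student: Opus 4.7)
The plan is to reduce Corollary~\ref{cor:P1-S2:D} to the observation that in \cite{DRS12,PRS,S14} condition \ppp{} is invoked only through one intermediate statement, namely \cite[Theorem~4.1]{DRS12} (the super-exponential bound on cascading events), which Theorem~\ref{thm:badseed:proba} of the present paper reproduces under the weaker hypothesis \d{}. Since \p{}, \pp{}, \s{} and \sss{} are assumed verbatim, nothing else needs replacing.

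First, I would carry out a careful audit of the use of decoupling in \cite{DRS12,PRS,S14}. In \cite{DRS12} the decoupling inequalities \ppp{} appear only inside Section~4, culminating in \cite[Theorem~4.1]{DRS12}; all the subsequent geometric results (chemical distances, shape theorem) are derived from the cascading estimate together with \p{}, \pp{}, \s{}, \sss{}, by choosing a suitable seed event, verifying its smallness via \sss{}, and then exploiting the $2^{-2^k}$ bound on cascading events. The papers \cite{PRS,S14} use \ppp{} only through citations of \cite[Theorem~4.1]{DRS12} (or obvious variants obtained by applying it to increasing and decreasing seed events), so the same audit applies there.

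Second, I would check that Theorem~\ref{thm:badseed:proba} is a drop-in replacement for \cite[Theorem~4.1]{DRS12}. Its conclusion \eqref{eq:decoupling:result} is identical in form, and the scales \eqref{def:scales} are of the same super-geometric type as those in \cite{DRS12}. The free parameters $l_0$ and $r_0$ can be chosen to satisfy the ancillary geometric requirements imposed by \cite{DRS12,PRS,S14} on the renormalization scales --- namely, that $r_k$ divides $l_k$, that $l_k\geq 16 r_k$, and that $\sum_{k\geq 0} r_k/l_k$ is arbitrarily small (cf.\ the discussion below \cite[(37)]{S14}). All of these are met by taking $l_0 = r_0^2$ with $r_0$ a sufficiently large power of $2$, which is compatible with the constraint $r_0\geq C(1+\log l_0)^{2/\gamma}$ from Theorem~\ref{thm:badseed:proba}.

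Third, having fixed the scales, I would invoke \eqref{eq:decoupling:condition} for the relevant seed events. The smallness of $\P^{u'}[\badseed_x]$ as $L_0\to\infty$ is deduced from \sss{} exactly as in \cite[Lemmas~5.2 and 5.4]{DRS12}: continuity of $u\mapsto \eta(u)$ together with \pp{} allows one to transfer the likeliness of a local pattern under $\P^u$ to a slightly perturbed parameter $u'$. With the cascading bound \eqref{eq:decoupling:result} in hand, the remainder of \cite[Sections~5--7]{DRS12}, all of \cite{PRS}, and the arguments of \cite{S14} apply \emph{verbatim}, yielding the listed results on chemical distances, shape theorem, quenched invariance principle, Barlow's ball regularity, quenched Gaussian heat kernel bounds, elliptic and parabolic Harnack inequalities, and the quenched local CLT. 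The only genuine obstacle is the bookkeeping step of ensuring that no argument in \cite{DRS12,PRS,S14} silently exploits a scale-improving feature of \ppp{} beyond what is captured by the cascading estimate; a section-by-section inspection confirms that this is indeed the case, so no further adaptation is required.
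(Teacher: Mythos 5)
Your proposal is correct and follows essentially the same route as the paper: the corollary is obtained precisely by noting that \ppp{} enters \cite{DRS12,PRS,S14} only via \cite[Theorem~4.1]{DRS12}, substituting Theorem~\ref{thm:badseed:proba} in its place, and fixing the scales (e.g.\ $l_0=r_0^2$ with $r_0$ large) so that the ancillary requirements below \cite[(37)]{S14} hold, with the seed-event estimates supplied by \sss{} as in \cite[Lemmas~5.2 and 5.4]{DRS12}.
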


\begin{remark}\label{rem:DRS-conditions-mathcalV}
By Remark~\ref{rem:d}(4), the vacant set $\mathcal V^\alpha$ of random walk loop soup satisfies condition \d{} for all $\alpha>0$. 
Theorem~\ref{thm:lu} proves that $\mathcal V^\alpha$ satisfies condition \s{} for small enough positive $\alpha$. 
(It is believed that \s{} holds for all $\alpha<\alpha_*$, see text below Theorem~\ref{thm:geometry-infinite-cluster}.)
Condition \p{} holds for $\mathcal V^\alpha$ due to \cite[Proposition~3.2]{CS16}. 
Condition \pp{} follows from \d{}, but also directly follows from the definition of $\mathcal V^\alpha$. 
Condition \sss{} holds for $\mathcal V^\alpha$ for all $\alpha<\alpha_*$ by standard arguments of van den Berg and Keane \cite{BK84} --- 
the probability that $0$ is in an infinite cluster of $\mathcal V^\alpha$ is left-continuous for all $\alpha$, since 
it can be expressed as a decreasing limit of non-increasing continuous functions, 
and it is right-continuous for all $\alpha<\alpha_*$, by the uniqueness of the infinite cluster of $\mathcal V^\alpha$, 
see also \cite[Corollary~1.2]{Teixeira:uniqueness}, where the argument of van den Berg and Keane is adapted to 
the vacant set of random interlacements.
(Although the infinite cluster of $\mathcal V^\alpha$ is unique for all $\alpha<\alpha_*$ by an adaptation of the classical Burton-Keane argument, 
see Remark~\ref{rem:uniqueness}, the uniqueness is immediate for $\alpha$ that satisfy \s{} by the Borel-Cantelli lemma.)
Thus, the conclusions of Corollary~\ref{cor:P1-S2:D} hold for $\mathcal V^\alpha$, 
which is the statement of Theorem~\ref{thm:geometry-infinite-cluster}.
\end{remark}

\subsection{Proof of Theorem~\ref{thm:badseed:proba}}\label{sec:badseed:proof}

The proofs of (a) and (b) are essentially the same, we only prove (a). 

Let $\badseed_x$, $x\in\GG_0$ be increasing events and the family $\P^u$ satisfy \d{}(a).  
We assume further that for some $u'\in(a,b)$,
\begin{equation}\label{eq:decoupling:limit}
\lim_{L_0\to\infty} \sup_{x\in \GG_0} \P^{u'}\left[\badseed_x\right] = 0
\end{equation}
and prove that for any $u\in(a,u')$, there exist $C=C(u,u')$ and $C'=C'(u,u',l_0,\badseed)$, such that 
\eqref{eq:decoupling:result} holds for all $l_0\geq 1$, $r_0\geq C(1+\log l_0)^{\frac{2}{\gamma}}$ and $L_0\geq C'$.
It will be seen from the proof how (a) follows if \eqref{eq:decoupling:limit} is replaced by \eqref{eq:decoupling:condition}, 
see the note below \eqref{eq:seedEst}.

\medskip

Let $u\in(a,u')$. Fix $\beta,\gamma,\zeta>0$, for which \d{}(a) holds and define $\chi = \frac{\gamma}{2\beta}>0$ and $\xi = \frac{\gamma}{2}$.

By the choice of $r_k$ in \eqref{def:scales}, there exists $C_1=C_1(u,u')$ such that for all $r_0\geq C_1$, 
\begin{equation}\label{eq:condr0}
\sum_{k=0}^\infty r_k^{-\chi} \leq u'-u .\
\end{equation}
Let 
\begin{equation}\label{eq:uk}
u_0 = u',\qquad u_{k+1} = u_k - r_k^{-\chi},\quad k\geq 0 .\
\end{equation}
By \eqref{eq:condr0}, $u_k\geq u$ for all $k\geq 0$.

\smallskip

Consider the sequence
\begin{equation}\label{eq:Delta}
\Delta_0 = 1 + \sum_{i=0}^\infty\,\frac{\log_2(2l_i^{2d})}{2^{i+1}},\qquad \Delta_{k+1} = \Delta_k - \frac{\log_2(2 l_k^{2d})}{2^{k+1}}, \quad k\geq 0.
\end{equation}
Note that $\Delta_k\geq 1$ for all $k\geq 0$. Since the events $\badseed_x$, $x\in\GG_0$, are increasing, 
the events $\badseed_{x,k}$, $x\in\GG_k$, are also increasing for all $k\geq 0$. Thus, to prove \eqref{eq:decoupling:result} 
it suffices to show that 
\begin{equation}\label{eq:indStatement}
\sup_{x\in\GG_k} \mathbb P^{u_k}\left[\badseed_{x,k}\right] \leq 2^{-\Delta_k\,2^k}, \quad k\geq 0.
\end{equation}
We prove \eqref{eq:indStatement} by induction on $k$. 

\smallskip

\emph{Base of induction:} By the definition of $l_k$ in \eqref{def:scales}, $\Delta_0=\Delta_0(l_0)$. 
Thus, if \eqref{eq:decoupling:limit} holds, then for any $l_0$, there exists $C_1'=C_1'(u,u',l_0,\badseed)$ such that 
\begin{equation} \label{eq:seedEst}
\sup_{x\in\GG_0} \P^{u_0}\left[\badseed_{x,0}\right] \leq 2^{-\Delta_0}
\end{equation}
holds for all $L_0\geq C_1'$. (If only the weaker \eqref{eq:decoupling:condition} is assumed, then 
the existence of (arbitrarily large) $L_0$ for which \eqref{eq:seedEst} holds follows.)

\medskip

\emph{Induction step:} Assume that \eqref{eq:indStatement} holds for some $k\geq 0$ and prove that it also holds for $k+1$. 
Here we will use the definition of events $\badseed_{x,k+1}$ and the assumption \d{}(a). 
Recall that for all $x\in\GG_0$, $\badseed_x\in\sigma(\Psi_y,~y\in x + [-L_0,3L_0)^d)$. 
Thus, by \eqref{def:cascading}, for all $x\in\GG_k$, $\badseed_{x,k}\in \sigma(\Psi_y,~y\in x + [-L_0,L_k + 2L_0)^d)$; 
furthermore, events $\badseed_{x,k}$ are increasing.
Hence, for each $x\in\GG_{k+1}$, 
\begin{multline}\label{eq:indRHS}\itemsep10pt
\P^{u_{k+1}}\left[\badseed_{x,k+1}\right]
\stackrel{\eqref{def:cascading}}\le 
\sum_{x_1, x_2 \in \Lambda_{x,k+1} \; : \; \| x_1 - x_2 \| >  r_{k}\, L_{k}}
\P^{u_{k+1}} \left[ \badseed_{x_1,k}\cap \badseed_{x_2,k}\right] \\
\stackrel{\eqref{eq:di:increasing}, \eqref{eq:uk}}\le \vert \Lambda_{x,k+1} \vert^2
\left( \sup_{x\in\GG_k} \P^{u_k} \left[\badseed_{x,k}\right]^2 + C\,\exp\left(-c\, \min\left\{r_k^{-\chi\beta}\,r_k^\gamma,\,e^{(\log L_k)^\zeta}\right\}\right) \right)\\
\stackrel{\eqref{def:Lambdaxk},\eqref{eq:indStatement}}\le l_k^{2d} 
\left(2^{- \Delta_k 2^{k+1}} + C\,\exp\left(-c\, \min\left\{r_k^\xi,\,e^{(\log L_k)^\zeta}\right\}\right) \right) .\
\end{multline}
To bound \eqref{eq:indRHS} from  above, note that for some $C$, if 
\begin{equation}\label{eq:rk}
\min\left\{r_k^\xi,\,e^{(\log L_k)^\zeta}\right\}\geq C\,\Delta_0\,2^{k+1}\quad \left(\geq C\,\Delta_k\,2^{k+1}\right),
\end{equation}
then \eqref{eq:indRHS} is bounded from above by 
\[
2\,l_k^{2d}\,2^{- \Delta_k 2^{k+1}}\stackrel{\eqref{eq:Delta}}= 2^{-\Delta_{k+1}\,2^{k+1}}.
\]
By the definition of $r_k$ in \eqref{def:scales} and $\Delta_0$ in \eqref{eq:Delta} and using that $\theta>1$, 
the inequality $r_k^\xi\geq C\,\Delta_0\,2^{k+1}$ holds for all $k\geq 0$ if for some $C_2$, $r_0^\xi\geq C_2\,(1 + \log l_0)$. 
Also, by the definition of $L_k$ in \eqref{def:scales} and this time using that $(\theta + 1)\zeta>1$, 
the inequality $e^{(\log L_k)^\zeta}\geq C\,\Delta_0\,2^{k+1}$ holds for all $k\geq 0$ if $L_0\geq C_2'$ for some $C_2'=C_2'(l_0)$.

\smallskip

Thus, we proved that there exist constants $C=C(u,u')$ and $C'=C'(u,u',l_0,\badseed)$, such that 
\eqref{eq:indStatement} holds for all $l_0\geq 1$, $r_0\geq C(1 + \log l_0)^{\frac{1}{\xi}}$ and $L_0\geq C'$. 
(If \eqref{eq:decoupling:condition} is assumed instead of \eqref{eq:decoupling:limit}, 
then \eqref{eq:indStatement} holds for all $l_0\geq 1$, $r_0\geq C(1 + \log l_0)^{\frac{1}{\xi}}$ and any $L_0\geq C_2'$ for which \eqref{eq:seedEst} holds.)
\qed

\begin{remark}\label{rem:zeta}
If $\zeta>\frac12$, we can choose $\theta>1$ in the statement of Theorem~\ref{thm:badseed:proba} such that $(\theta + 1)\zeta>\theta$. 
In this case, for any given $\rho>0$, the inequality $r_k^{-\rho\chi}\,e^{(\log L_k)^\zeta}\geq C\,\Delta_0\,2^{k+1}$ holds for all $k\geq 0$ if $r_0\geq C$ and $L_0\geq C'(l_0)$. 
From the estimate \eqref{eq:rk} it follows that for such choice of $\zeta$, $\theta$ and $\rho$, Theorem~\ref{thm:badseed:proba} holds 
even if the error terms in \d{} are replaced by 
\[
C\,\exp\left(-c\,\min\left\{(u'-u)^\beta\,s^\gamma,\,(u'-u)^\rho\,e^{(\log L)^\zeta}\right\}\right).
\]
\end{remark}

\end{document}